\newtheorem{thm}{Theorem}[section]
\newtheorem{alg}[thm]{Algorithm}
\newtheorem{prop}[thm]{Proposition}
\theoremstyle{definition}
\theoremstyle{remark}
\newtheorem{rem}[thm]{Remark}
\numberwithin{equation}{section}
\begin{document}
\title{  Recovering   rank-one matrices via rank-$r$ matrices relaxation}

\author{\normalsize   Pengwen~Chen\footnote{ Applied Mathematics, National Chung Hsing University, Taiwan}, Hung~Hung\footnote{
 Institute of Epidemiology and Preventive Medicine, National Taiwan University, Taiwan}
}
%\date{}%
\maketitle
\abstract{ 

PhaseLift, proposed by E.J. Cand\`{e}s et al., is one convex relaxation approach for phase retrieval. The relaxation enlarges the solution set  from rank one matrices to positive semidefinite matrices. 
 In this paper,   a  relaxation  is employed to  nonconvex alternating minimization methods to recover  the rank-one matrices.
 A generic measurement matrix can be standardized to a matrix consisting of orthonormal columns. To recover the rank-one matrix,
the standardized  frames  are used to select   the matrix with  the maximal   leading eigenvalue among the rank-$r$ matrices.
Empirical studies are conducted to validate the effectiveness of this relaxation approach. 
In the case of Gaussian random matrices  with  a sufficient number  of nearly orthogonal sensing vectors, 
 we show that the singular vector corresponding to the least singular value is  close to the unknown signal, and thus it can be a good initialization for the nonconvex 
 minimization algorithm.

}

\section{Introduction}
%\subsection{Phase retrieval}
Phase retrieval is one important  inverse problem that arises in various fields, including  electron microscopy,  crystallography,  astronomy, and  optics. \cite{Millane:90, hurt2001phase, MiaoCell, Review,Fannjiang:12, IOPORT.06071995}. Phase retrieval aims to  recover signals from magnitude measurements only (optical devices do not allow direct recording of the phase of the electromagnetic field).

Let  $x_0\in \mathbf{R}^n$ or $x_0\in \mathbf{C}^n$ be some nonzero unknown vector to be measured.
Let $A\in \mathbf{R}^{N\times n}$ be the matrix whose   rows  are  sensing vectors $\{a_i\in\mathbf{R}^n\}_{i=1}^N$ or $\{ a_i\in \mathbf{C}^n\}_{i=1}^N$. The measurement  vector $b\in \mathbf{R}^N$ is the   magnitude, 
\[ \textrm{ $b=|Ax_0|$, or $ b_i=| {a}_i\cdot x_0|$ for $i=1,\ldots, N$. }\] Obviously, the signal $x_0$ can be determined up to a global phase factor at best, i.e., becasue  \[ |x_0\cdot a_j e^{i\theta}|=|x_0\cdot a_j| \textrm{ for any $\theta\in [0,2\pi]$},\] then $x_0 e^{i\theta}$ is also a solution.  The  recovery of $x_0 e^{i\theta}$ is referred to as the exact recovery.
When $A$ is a Fourier matrix, the problem is known as phase retrieval.
With this specific measurement matrix,  the task becomes more demanding, because Fourier magnitude is not only preserved under  global phase shift, but also  under spatial shift and conjugate inversion, which yields   twin images\cite{IOPORT.06071995}.

The first widely accepted phase retrieval algorithm was presented by Gerchberg and Saxton\cite{GS}.  Fienup\cite{Fienup:82} developed  the convergence analysis of the error-reduction algorithm and proposed   input-output iterative algorithms. The  basic and hybrid input-output algorithms can be viewed as a nonconvex Dykstra algorithm and a nonconvex Douglas-Rachford algorithm, respectively\cite{Bauschke02phaseretrieval}.   Empirically,  the hybrid input-output  algorithm is observed to   converge to
a global minimum (no theoretical proof is available)\cite{Review}.

The  major obstacle to  phase retrieval  is caused by the lack of convexity of the magnitude  constraint\cite{IOPORT.06071995}. 
  PhaseLift\cite{CPAM}, proposed by E.J. Cand\`{e}s et al., is one convex relaxation approach for phase retrieval. The relaxation changes the problem of vector recovery into a rank-one matrix recovery.   The global optimal solution can  be achieved, when  $A$ is a Gaussian random matrix and $N\ge Cn$ with some absolutely constant $C$\cite{FCM}.  To some extent,  this approach provides  a solution  to the phase retrieval problem, at least from the theoretical perspective, provided that the feasible set can  shrink to one single point under a sufficient number of measurements. In practice, the sensing matrix $A$ does not belong to this specific Gaussian model or  uniform models,  and   the  computational load of solving the  convex feasibility problem  can be   too demanding. In particular, it  requires the  computation of all the singular values in each iteration.

 In this paper, we explore the possibility  of using the rank-$r$ matrix relaxation in  phase retrieval.    In the first section, to illustrate the idea,  we review the exact recovery condition in  PhaseLift. Typically, the exact recovery of rank-one matrices  requires  a large   $N/n$ ratio. We \textit{standardize}  the   frame, such that  each matrix in the feasible set  has an  equal trace norm. Then,  the desired rank one matrix is the matrix whose leading eigenvalue is maximized. Gradually  enlarging the leading eigenvalue, the matrix  moves towards the   rank one matrix with high probability. Our  simulation result  substantiates the effectiveness of  recovering rank one matrices. 

To reduce the computational load, in section 2,   we apply  the relaxation to the   nonconvex alternating direction minimization method (ADM) proposed in~\cite{Wen}.
 Frames are standardized  to ensure the   equal trace  among  all  feasible solutions.  In theory, searching for  the optimal solution in a higher dimensional space can alleviate the stagnation of   local optima.  Finally,    with  a sufficient amount of nearly orthogonal sensing vectors,  we show that the corresponding singular vector is  close to the unknown signal and can thus  be a good initialization. To some extent,  this theoretical result provides a partial  answer to the solvability of phase retrieval.   In fact, when there is a lack of nearly orthogonal sensing vectors, the ADM can fail to converge, as discussed in  Section~\ref{Failure1}. 

 In section 3, we conduct a few experiments to demonstrate the performance of the ADM methods, including the convergence failure of nonconvex ADM, the comparison between rank one ADM to rank-$r$ ADM, and the application of phase retrieval computer simulations. Finally, given a generic matrix, we can find an equivalent matrix  whose columns are  orthogonal and  whose rows have 
 equal norm. We discuss  the existence and uniqueness  proof of the orthogonal factorization   in the appendix.
  
 %%%%%%%
\subsection{Notation}
In this paper, we use the following notations. Let  $x^\top$ be the Hermitian conjugate of $x$, where $x$
 can be real or complex matrices (or vectors).  Hence, $x$ is Hermitian if $x=x^\top$. The notation $x^*$ is reserved
  for  a limit point of a  sequence $\{x^k\}_{k=0}^\infty$ or the final iteration of $x$ in the computation. Let $\|x\|_F$ be the Frobenius norm.
The function $diag(X)$ produces a vector that   is the diagonal of a matrix $X$.  The pseudo-inverse of   matrix $X$ is denoted by  $X^\dagger$.
The vector $e$ is a vector consisting of 
 one, and $e_j$ is the vector consisting of zero, except one at  the $j^{th}$ entry. 
 Let $x_0\in \mathbf{R}^n$ be the unknown signal and $A\in \mathbf{R}^{N\times n}$ or $\in \mathbf{C}^{N\times n}$ be the sensing matrix. Hence $N$ is the number of measurements.

\subsection{Ratio $N/n$}

We shall briefly outline   the threshold  ratio $N/n $ on the exact recovery of $x_0$~\cite{balan2006signal}. The result can be regarded as a worst-case bound, because we demand the exact recovery for all possible nonzero vectors $x$.
Denote a nonlinear map associated with $A$ by $M^A: \mathbf{R}^n\to \mathbf{R}^N$,
\[
M^A(x)=\sum_{k=1}^N |a_k\cdot x| e_k.
\]
The range of the mapping $M^A$ consists of all the possible measurement vectors $b$ via the sensing matrix $A$.

Throughout this paper, we assume that $A$ has rank $n$.
  We say that a matrix $A\in \mathbf{R}^{N\times n}$ satisfies the \textit{rank* condition} if 
 all square  $n$-by-$n$ sub-matrices of $A$ has  full rank  and $N> n$.
That is,  any $n$ row vectors of  $A$ are  linearly independent.
 
\begin{prop}\label{2n} Suppose  that $A$ satisfies the  rank* condition.  If $N\ge 2n-1$, then $M^A: \mathbf{R}^{n}\to \mathbf{R}^N$ is injective.
\end{prop}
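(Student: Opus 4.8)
The plan is to unpack the hypothesis $M^A(x)=M^A(y)$ coordinatewise and then apply the rank* condition via a pigeonhole argument. One caveat first: since $M^A(x)=M^A(-x)$ always, ``injective'' here must be read modulo the unavoidable global sign; concretely I will show that $M^A(x)=M^A(y)$ forces $y=x$ or $y=-x$, which is the exact recovery statement in the real case. (If the literal reading of the statement is intended, then it should be understood with $x$ restricted to a fundamental domain of the sign action, but the content of the proof is the same.)

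Suppose $M^A(x)=M^A(y)$, i.e.\ $|a_k\cdot x|=|a_k\cdot y|$ for every $k=1,\dots,N$. For a single index $k$, this equality is equivalent to $a_k\cdot x=a_k\cdot y$ or $a_k\cdot x=-a_k\cdot y$, i.e.\ $a_k\cdot(x-y)=0$ or $a_k\cdot(x+y)=0$; so every row of $A$ is orthogonal to $x-y$ or to $x+y$. Let $I=\{k: a_k\cdot(x-y)=0\}$ and let $J=\{1,\dots,N\}\setminus I$. For $k\in J$ we have $a_k\cdot(x-y)\neq 0$, so the dichotomy above forces $a_k\cdot(x+y)=0$. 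Hence $|I|+|J|=N\ge 2n-1$, and therefore at least one of $|I|,|J|$ is $\ge n$ (if both were $\le n-1$ we would get $N\le 2n-2$). In the first case pick any $n$ indices $k_1,\dots,k_n\in I$: by the rank* condition the rows $a_{k_1},\dots,a_{k_n}$ are linearly independent, hence span $\mathbf{R}^n$, and since $x-y$ is orthogonal to all of them we get $x-y=0$. In the second case the same argument applied with $x+y$ gives $x+y=0$. In either case $y=\pm x$, as claimed. (The degenerate sub-case $x=0$ is covered as well: then $a_k\cdot y=0$ for all $k$, and since $A$ has rank $n$ this forces $y=0$.)

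I do not expect a genuine obstacle here; the only points requiring care are bookkeeping ones. First, the threshold $N\ge 2n-1$ is used in exactly one place and is tight for this pigeonhole step, so there is no slack. Second, one must remember that recovering $y=-x$ rather than $y=x$ is not a failure of injectivity in the phase-retrieval sense, so the conclusion ``$y=\pm x$'' is the correct target rather than ``$y=x$.'' Finally, it is worth noting in a remark that $2n-1$ is sharp for real signals, which motivates the use of the relaxations developed in the rest of the paper when the number of measurements is smaller or the frame is not generic.
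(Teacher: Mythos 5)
Your proof is correct and follows essentially the same pigeonhole argument as the paper: partition the indices according to whether $a_k\cdot(x-y)=0$ or $a_k\cdot(x+y)=0$, note that one of the two sets has at least $n$ elements since $N\ge 2n-1$, and invoke the rank* condition. Your explicit caveat that the second case yields $y=-x$ (so injectivity must be read modulo the global sign) is a legitimate point that the paper's own proof glosses over when it asserts the ``nonexistence of two distinct vectors.''
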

\begin{proof} Suppose that $M^A (x)=M^A(\hat x)$ with $x\neq \hat x$; then $|a_k\cdot x|=|a_k\cdot \hat x|$. Rearrange the indices and assume \[ a_k\cdot x=a_k\cdot \hat x \textrm{ for } k=1,\ldots, l,  \]
\[ a_k\cdot x=-a_k\cdot \hat x \textrm{ for } k=l+1,\ldots, N. \]
 Because $N\ge 2n-1$, then either $l\ge n$ or $N-l\ge n$. Suppose $l\ge n$. Then 
$x-\hat x\in \mathbf{R}^n$ is orthogonal to  $ a_1,\ldots, a_l$. 
The full rank condition  yields $x-\hat x=0$, which shows the 
nonexistence of two distinct vectors $x,\hat x$.  Similar arguments apply to the case $N-l\ge n$.
\end{proof}
According to the above  proof, when $N\le 2n-2$, we can find a pair of vectors $x,\hat x$ such that $|Ax|=b=|A\hat x|$. Indeed, when $N=2n-2$, let $u$ be the vector orthogonal to $\{a_i\}_{i=1}^{n-1}$ and $v$  be the vector orthogonal to $\{a_i\}_{i=n}^{2n-2}$. Then  $x=u+v$ and $\hat x=u-v$ are the desired pair of vectors.  
However, for any particular vector $x$, it is possible that no $\hat x\in \mathbf{R}^n$ exists in the case $n+1\le N\le 2n-2$. 
\begin{prop} Fix $x_0\in \mathbf{R}^n$. Suppose each row $a_i$ of $A$ is independently sampled from some continuous distribution on the  unit sphere in $\mathbf{R}^n$. Let $b=|Ax_0|$. Then, with probability one, $|Ax|=b$ has a unique solution $x=x_0$ for $N\ge n+1$.
\end{prop}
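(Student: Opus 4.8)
The plan is to reveal only the first $n$ rows of $A$, observe that they pin down a \emph{finite} list of candidate solutions, and then use row $n+1$ to eliminate the spurious ones. First I would discard a null event: since $x_0\neq 0$ is fixed and each $a_i$ is drawn from a continuous distribution on the sphere, with probability one no $a_i$ is orthogonal to $x_0$, and moreover any $n$ of the rows $a_1,\dots,a_N$ are linearly independent (the tuples that fail to span a proper subspace form a measure-zero set, being a zero set of a determinant). Work on this full-measure event from now on; in particular $a_1,\dots,a_N$ span $\mathbf{R}^n$.

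Next, note that any solution $x$ of $|Ax|=b$ satisfies $a_i\cdot x=\epsilon_i\,(a_i\cdot x_0)$ for a well-defined sign $\epsilon_i\in\{+1,-1\}$, since $|a_i\cdot x|=|a_i\cdot x_0|\neq 0$. Restricting attention to $i=1,\dots,n$, the sign string $(\epsilon_1,\dots,\epsilon_n)$ takes one of $2^n$ values, and for each value the linear system $\{a_i\cdot x=\epsilon_i(a_i\cdot x_0)\}_{i=1}^n$ has exactly one solution because $a_1,\dots,a_n$ are independent. Hence every solution $x$ must coincide with one of these $2^n$ candidates, each of which is a deterministic function of $a_1,\dots,a_n$ alone. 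The all-$(+1)$ string yields $x=x_0$ and the all-$(-1)$ string yields $x=-x_0$; I would then check that every \emph{mixed} string yields a candidate $x\neq\pm x_0$, because $x=\pm x_0$ forces $\epsilon_i=\pm 1$ for all $i\le n$ (using $a_i\cdot x_0\neq 0$), contradicting mixedness.

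Finally I would bring in row $n+1$. Condition on $a_1,\dots,a_n$: the at most $2^n-2$ mixed-string candidates are now fixed vectors $x$ with $x-x_0\neq 0$ and $x+x_0\neq 0$. For such an $x$ to actually solve the system it must satisfy $|a_{n+1}\cdot x|=|a_{n+1}\cdot x_0|$, i.e.\ $a_{n+1}\perp(x-x_0)$ or $a_{n+1}\perp(x+x_0)$; since $a_{n+1}$ is independent of $a_1,\dots,a_n$ and has a continuous distribution, this union of two hyperplane slices of the sphere has probability zero. Taking a union over the finitely many mixed strings and integrating out $a_1,\dots,a_n$, we conclude that almost surely no mixed-string candidate solves the system, so the solution set of $|Ax|=b$ is exactly $\{x_0,-x_0\}$ --- unique up to the unavoidable global sign. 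Only rows $1,\dots,n+1$ were used, so the conclusion holds for all $N\ge n+1$; the case $n=1$ is immediate from $|a_1 x|=|a_1 x_0|$ and may be set aside.

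The step that needs care is precisely this last elimination. A direct union bound over all $2^N$ sign patterns combined with the counting argument behind Proposition~\ref{2n} only reaches $N\ge 2n-1$, since $\epsilon_i$ is not independent of $a_i$. The improvement to $N\ge n+1$ comes from first exposing $n$ rows so that the set of possible alternative solutions collapses to a finite list that does not depend on $a_{n+1}$, against which $a_{n+1}$ is an independent measure-zero test; everything else is routine Fubini bookkeeping and the null-set observations from the opening paragraph.
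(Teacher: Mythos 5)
Your proof is correct and follows essentially the same route as the paper's: condition on the first $n$ rows to reduce the alternative solutions to a finite list of sign-pattern candidates that do not depend on $a_{n+1}$, then use the continuity and independence of $a_{n+1}$ to eliminate them via a measure-zero orthogonality event, concluding uniqueness up to sign. The only difference is notational --- the paper writes $a_{n+1}=c^\top A_1$ and tests $c\cdot y=0$ with $y=A_1(x_0\pm\hat x)$, which is exactly your condition $a_{n+1}\perp(x\mp x_0)$.
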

\begin{proof}  
Assume $N=n+1$.
Write  \[A:=
\left[
\begin{array}{c}
A_1  \\
  A_2
\end{array}
\right] \textrm{ with } A_1\in\mathbf{R}^{n,n}, A_2\in \mathbf{R}^{1,n}.\] Then with probability one, $A_1$ is full rank and thus we can find a unique nonzero vector $c\in\mathbf{R}^n$ such that $a_{n+1}=c^\top A_1$. Clearly, $c$ is a continuous random vector that depends on $A_2$. 

Suppose that $x=\hat x$ is another solution of  $|Ax|=b$.  Then $\hat x$ should be one solution of  $2^n$ possible systems \[ \{ (A_1 \hat x)_i=\pm b_i \} \textrm{ for $i=1,\ldots, n$.}\] Let  $y:=A_1(x_0\pm \hat x)$. Then, $y$  must be one of the $3^n$ vectors with $y_i=\pm 2b_i $ or $0$ for $i=1,\ldots, n$.  Note that $y$ is independent of the selection of $A_2$.   Alternatively,  $a_{n+1} x_0=\mp a_{n+1}\hat x$ yields the orthogonality between $c$ and  $ A_1 (x_0\pm \hat x)$, i.e., \[ a_{n+1}\cdot (x_0\pm \hat x)=c^\top A_1  (x_0\pm \hat x)=c\cdot y=0.\]
Since  $c$ is a continuous random vector  that depends on $A_2$,  then with probability one, $c\cdot y=0$ leads to  $y=0$, which  implies that $x=\pm \hat x$ ($A_1$ is full rank).      
\end{proof}

However,  for generic  complex frames, the map  is  injective if  
 $N\ge 4n-2$, i.e., all vectors $x_0\in \mathbf{C}^n$ can be recovered. 
To recover a fixed vector $x_0$,  $N\ge 2n$ is a necessary condition. Interested readers are referred to the discussion in~\cite{balan2006signal} and \cite{Bandeira2013}.

One naive thought is that as $N$ grows faster than the speed of $n$, the rank one matrix can  be recovered. Unfortunately, this can be incorrect in some circumstances. 
We can construct  some matrix  $A\in\mathbf{R}^{N\times n}$ with $N$ being  order of $2^n$, but  some vector  $x_0 $ still cannot be recovered due to the failure of the rank* condition,  see the following remark.
\begin{rem}(Bernoulli random matrices)
We construct an example, in which   $x_0=e_1$ cannot be recovered from the measurement $|Ax_0|=b=e$.
Denote by $S\subset \mathbf{R}^{n}$ a set of vectors whose entries are $\pm 1$. There are $2^n$ vectors in $S$. Pick any subset of $N$ vectors from $S$ as $\{a_i\}_{i=1}^N$ ($A\in\mathbf{R}^{N\times n}$ is the Bernoulli random matrix). All the vectors $e_j, j=1,\ldots, n$ satisfy  $|Ae_j|=|Ax_0|=b=e$. Since  these matrices are indistinguishable,  $M^A$ does not  the injective property. 
 Note that the rank of the random matrix $A$ is $n$ in most cases. The rank $n$ condition on $A$, together with a large $N$,  does not imply the exact recovery of $x_0$. 
One can easily verify that  the Fourier matrix yields  the same difficulty. 
%The injective of $M^A$ is a very strict condition.
% In the paper, we are majorly  interested in the exact recovery of $x_0$ with high probability.

\end{rem}

 \subsection{PhaseLift}
Next, we introduce the PhaseLift method proposed by Cand\`{e}s et al.\cite{CPAM}.
 To simplify the discussion, we focus on the noiseless case.
 Introduce the linear operator on Hermitian matrices, 
 \[
 \mathcal{A}: \mathcal{H}^{n\times n}\to \mathcal{R}^N,\; \mathcal{A}(X):=diag(A^\top X A)=b^2, \; b_i^2=a_i^\top Xa_i, i=1,\ldots, N.
 \]
 An equivalent condition of $|Ax_0|=b$ is that $X:=x_0 x_0^\top $ is a rank-one solution to $\mathcal{A}(X)=b^2$. 
Hence,  the phase retrieval problem can be formulated as the matrix recovery problem,
 \[
 \min_{X} rank(X) \textrm{ subject to } \mathcal{A}(X)=b^2, X\succeq 0.
 \] 
 By factorizing a rank one solution of $X$, we can recover the  signal $x_0$.  

% In statistics application, people are interested in finding a low rank positive semidefinite matrix $X$ such that $\mathcal{A}(X)\approx b^2$. The above phase retrieval problem can be regarded as  a special case of the low rank recovery problem. 
 
 %% PhaseLift
 To overcome the difficulty of rank minimization,   Cand\`{e}s et al.~\cite{CPAM} propose a convex relaxation of the rank minimization problem, which  is the trace minimization problem, 
 \[
 \min_{X}  tr(X), \textrm{ subject to } \mathcal{A}(X)=b^2, X\succeq 0.
 \]
When \[ I_{n\times n}\in span\{a_i a_i^\top \}_{i=1}^n,\] the condition $\mathcal{A}(X)_i= tr(a_i a_i^\top  X)$ automatically  determines the trace  $ tr(X)$ of $X$ and then 
 the trace minimization objective is redundant. 
Recovering $X_0=x_0x_0^\top $ can be achieved via solving 
the following  convex  feasibility problem,
\begin{equation}\label{PhaseLift}
\{X: X\succeq 0,  \mathcal{A}(X)=b^2\}.
\end{equation}
In the next subsection, we will show that we can always remove the trace minimization objective via an orthogonal decomposition on $A$, either SVD or QR factorizations. 

 The following Prop.~\cite{CPAM} illustrates the optimality of   the feasibility problem, which is a key tool for justifying the exact recovery theoretically. The proof can be found in~\cite{demanet2012stable}.
 \begin{prop}% dual certificate
 Suppose that  the restriction of $\mathcal{A}$ to the tangent space at $X_0:=x_0x_0^\top $ is injective. One sufficient condition for the exact recovery is  the existence of $y\in \mathbf{R}^N$, such that \[ Y:=\mathcal{A}^\top y=A^\top diag(y) A=\sum_{i=1}^N y_i a_i a_i^\top \] satisfies \[
Y_T=0 \textrm{ and } Y_{T^\bot}\succ 0.
\]
\end{prop}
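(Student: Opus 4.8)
The plan is to use $Y=\mathcal{A}^\top y$ as a dual certificate and to peel off, one subspace at a time, any feasible competitor of $X_0=x_0x_0^\top$. Recall that the tangent space at $X_0$ is $T=\{x_0v^\top+vx_0^\top: v\in\mathbf{R}^n\}$ and that its orthogonal complement inside the Hermitian matrices is $T^\bot=\{M=M^\top: Mx_0=0\}$. Writing $P$ for the orthogonal projector onto the hyperplane $x_0^\bot$, one checks that the projection of a Hermitian $H$ onto $T^\bot$ is $H_{T^\bot}=PHP$ while $H_T=H-PHP$; moreover the hypothesis $Y_T=0$ says precisely $Y=Y_{T^\bot}\in T^\bot$, i.e. $Yx_0=0$, and $Y_{T^\bot}\succ 0$ is to be read as $u^\top Y u>0$ for every $0\neq u\in x_0^\bot$.

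Let $X$ be any feasible matrix: $X\succeq 0$ and $\mathcal{A}(X)=b^2=\mathcal{A}(X_0)$; set $H:=X-X_0$, so $\mathcal{A}(H)=0$. First I would pair $H$ with the certificate, using that $\mathcal{A}^\top$ is the adjoint of $\mathcal{A}$: $\langle Y,H\rangle=\langle\mathcal{A}^\top y,H\rangle=\langle y,\mathcal{A}(H)\rangle=0$, and since $Y\in T^\bot$ this collapses to $\langle Y_{T^\bot},H_{T^\bot}\rangle=0$. Next I would show that feasibility forces $H_{T^\bot}\succeq 0$: compressing $X=X_0+H$ to $x_0^\bot$ and using $PX_0P=Px_0x_0^\top P=0$ gives $PXP=PHP=H_{T^\bot}$, and $X\succeq 0$ gives $PXP\succeq 0$.

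Now I would combine the two facts. Writing a spectral decomposition $H_{T^\bot}=\sum_j\lambda_j u_ju_j^\top$ with $\lambda_j\ge 0$ (and the $u_j$ with $\lambda_j>0$ necessarily orthogonal to $x_0$, since $H_{T^\bot}x_0=0$), the relation $0=\langle Y_{T^\bot},H_{T^\bot}\rangle=\sum_j\lambda_j\,u_j^\top Y_{T^\bot}u_j$ is a sum of nonnegative terms, each strictly positive unless $\lambda_j=0$, because $Y_{T^\bot}$ is positive definite on $x_0^\bot$. Hence every $\lambda_j=0$, i.e. $H_{T^\bot}=0$ and $H\in T$. Finally, $H\in T$ together with $\mathcal{A}(H)=0$ and the assumed injectivity of $\mathcal{A}$ on $T$ forces $H=0$, so $X=X_0$. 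Thus $X_0$ is the unique feasible point of \eqref{PhaseLift}, and factoring it recovers $x_0$ up to a global phase.

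The two middle steps carry the argument; the surrounding algebra is routine. The step $H_{T^\bot}\succeq 0$ is the only place the constraint $X\succeq 0$ is actually used, and the subtlety there is that this positivity is inherited by the compression to $x_0^\bot$ while $X_0$ is annihilated by that compression, so a feasible perturbation cannot open up a negative direction. The step $H_{T^\bot}=0$ is where the \emph{strict} inequality $Y_{T^\bot}\succ 0$ is indispensable: with only $Y_{T^\bot}\succeq 0$ there would be room for a nonzero $H_{T^\bot}$ supported on $\ker Y_{T^\bot}$. The injectivity of $\mathcal{A}$ on $T$ enters only at the very end---it is exactly what turns ``$H$ is tangent'' into ``$H=0$'', the certificate on its own controlling only the directions normal to $T$. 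I do not expect a genuine obstacle beyond keeping the projector identities and the strictness bookkeeping straight.
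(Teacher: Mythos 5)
Your argument is correct and complete. Note that the paper does not actually prove this proposition---it defers to the cited reference \cite{demanet2012stable}---and what you have written is precisely the standard dual-certificate argument from that literature: pair $H=X-X_0$ with $Y$ to get $\langle Y_{T^\bot},H_{T^\bot}\rangle=\langle y,\mathcal{A}(H)\rangle=0$, use $PX_0P=0$ together with $X\succeq 0$ to get $H_{T^\bot}=PXP\succeq 0$, conclude $H_{T^\bot}=0$ from the strict positivity of $Y_{T^\bot}$ on $x_0^\bot$, and finish with the injectivity of $\mathcal{A}$ on $T$. Your identification of where each hypothesis is used (strictness of $Y_{T^\bot}\succ 0$ versus injectivity on $T$) matches the role these conditions play in the cited proof.
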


 The  proposition  states one sufficient condition under which $x_0x_0^\top $ can be recovered from the frame $A$. In the real case, 
 when $N\ge 2n-1$, the rank* condition on $A$  is one sufficient condition  to ensure the injective property of 
 the restriction of $\mathcal{A}$. 
  Indeed, for any $x_0\neq 0$, $Ax_0$ consists of at most $n-1$ zeros thanks to the rank* condition, thus $Ax_0$ consists of  at least  $n$ nonzero entries.
Since  the tangent space at $x_0x_0^\top $ consists of $\hat X$ in a form $x_0 x^\top +x x_0^\top $ with some  $x\in \mathbf{R}^n$,    then   \[  \mathcal{A}(\hat X)=\mathcal{A}(x_0 x^\top +x x_0^\top )=2(Ax_0)  (Ax)=0\in \mathbf{R}^N \]  yields   $x=0$ (due to  the rank*  condition), which implies   $\hat X=0$.

%Suppose $\mathcal{A}(X)=\mathcal{A}(X_0)=b^2$. Writing $ X-X_0=(X-X_0)_T+(X-X_0)_{T^\bot}$, we have
% \[
%0=y\cdot (b^2-b^2)=y\cdot \mathcal{A}(X-X_0)= tr((X-X_0)Y)= tr(X_{T^\bot} Y_{T^\bot}) \] which implies $(X-X_0)_{T^\bot}=X_{T^\bot}=0$. Thus, $\hat X:=X-X_0 $ lies in the tangent space with $\mathcal{A}(X-X_0)=0$. From the above claim,     $X=X_0$.

%\begin{rem}  About the theorem, suppose that $I_{n\times n}$ lies in the range of $\mathcal{A}^\top $. We like to point out  one equivalent dual certificate condition:
%\begin{equation}\label{Dual}
%Y\in Range(\mathcal{A}^\top ), Y_T=x_0x_0^\top,  Y_{T^\bot}\prec I_{T^\bot}.
%\end{equation}
%Such conditions suggest the intuition that  as $N$ is large enough, we have enough freedom to construct the dual certificate $y$ or $Y$. 

%\end{rem}
\subsection{Special frames}
We shall highlight  three special frames where  the feasible set only consists of one single point. Thus  the unknown signal $x_0$ can be recovered via PhaseLift.
In the first case, we show that a frame with
 $N=n+1$ measurement vectors are sufficient to determine the unknown matrix $X=x_0x_0^\top$.
\begin{prop} Suppose that $a_j=e_j$ for $j=1,\ldots, n$  and $a_{n+1}(j) x_0(j)> 0$ for $j=1,\ldots, n$. \footnote{ This condition states that the entries of $a_{n+1}$ have the same sign as the ones of $x_0$.} Then,  the feasible set of  PhaseLift consists of only one single point, $x_0x_0^\top$.
\end{prop}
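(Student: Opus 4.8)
The plan is to exploit the two pieces of structure in the PhaseLift feasible set $\{X:X\succeq 0,\ \mathcal A(X)=b^2\}$: the affine constraints $a_i^\top X a_i=b_i^2$ and positive semidefiniteness. The first $n$ constraints, with $a_j=e_j$, read $X_{jj}=b_j^2=x_0(j)^2$, so every feasible $X$ shares the diagonal of $x_0x_0^\top$. The hypothesis $a_{n+1}(j)x_0(j)>0$ forces $x_0(j)\neq 0$ for all $j$; hence every $X_{jj}>0$ and, from $X\succeq 0$, the Cauchy--Schwarz bound $|X_{jk}|\le\sqrt{X_{jj}X_{kk}}=|x_0(j)x_0(k)|$ holds for all $j,k$. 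These are the only tools needed.

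Next I would write out the one remaining constraint, coming from $a_{n+1}$. Since every $c_j:=a_{n+1}(j)x_0(j)$ is positive, $b_{n+1}=|a_{n+1}\cdot x_0|=\sum_j c_j$, so
\[
b_{n+1}^2=\Big(\sum_j c_j\Big)^2=\sum_{j,k}c_jc_k=\sum_{j,k}a_{n+1}(j)a_{n+1}(k)\,x_0(j)x_0(k).
\]
Subtracting this from $\sum_{j,k}a_{n+1}(j)a_{n+1}(k)X_{jk}=b_{n+1}^2$ and discarding the diagonal terms, which already cancel, the constraint becomes
\[
\sum_{j\neq k}a_{n+1}(j)a_{n+1}(k)\big(X_{jk}-x_0(j)x_0(k)\big)=0.
\]

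The key step is a sign-alignment argument. From $a_{n+1}(j)x_0(j)>0$ we get $\mathrm{sign}(a_{n+1}(j))=\mathrm{sign}(x_0(j))$, so for each $j\neq k$ the numbers $a_{n+1}(j)a_{n+1}(k)$ and $x_0(j)x_0(k)$ have the same nonzero sign $\sigma_{jk}$. Writing $a_{n+1}(j)a_{n+1}(k)=\sigma_{jk}|a_{n+1}(j)a_{n+1}(k)|$ and $x_0(j)x_0(k)=\sigma_{jk}|x_0(j)x_0(k)|$, each summand above equals $|a_{n+1}(j)a_{n+1}(k)|\,\big(\sigma_{jk}X_{jk}-|x_0(j)x_0(k)|\big)$, which is $\le 0$ because $\sigma_{jk}X_{jk}\le|X_{jk}|\le|x_0(j)x_0(k)|$, and it carries the strictly positive weight $|a_{n+1}(j)a_{n+1}(k)|$. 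A sum of nonpositive numbers that is zero must be termwise zero, so $\sigma_{jk}X_{jk}=|x_0(j)x_0(k)|$ for every $j\neq k$; combined with $|X_{jk}|\le|x_0(j)x_0(k)|$ this pins down $X_{jk}=\sigma_{jk}|x_0(j)x_0(k)|=x_0(j)x_0(k)$. Together with the diagonal identity, $X=x_0x_0^\top$, so the feasible set is the single point $x_0x_0^\top$.

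I expect the only delicate point to be the sign bookkeeping: the hypothesis ``$a_{n+1}(j)x_0(j)>0$ for all $j$'' is used in exactly two places — to rule out zero coordinates of $x_0$ (so the Cauchy--Schwarz bound is nonvacuous) and to force the products $a_{n+1}(j)a_{n+1}(k)$ and $x_0(j)x_0(k)$ to share a sign, which makes every summand of the expanded $a_{n+1}$-constraint one-signed. If one works with complex Hermitian $X$ rather than real symmetric $X$, the same computation gives $\mathrm{Re}\,X_{jk}=x_0(j)x_0(k)$, and then $|X_{jk}|^2\le x_0(j)^2x_0(k)^2=(\mathrm{Re}\,X_{jk})^2$ forces $\mathrm{Im}\,X_{jk}=0$, so the conclusion is unchanged.
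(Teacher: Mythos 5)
Your proof is correct and follows essentially the same route as the paper: pin the diagonal with the $e_j$ measurements, bound the off-diagonals by $|X_{jk}|\le\sqrt{X_{jj}X_{kk}}$ via positive semidefiniteness, and use the sign-aligned $a_{n+1}$ constraint to force saturation of every one of these bounds. Your termwise ``sum of nonpositive terms equals zero'' bookkeeping is in fact a cleaner way of stating the paper's ``the chain of inequalities becomes equalities,'' and it has the small advantage of determining each entry $X_{jk}=x_0(j)x_0(k)$ directly rather than passing through the rank-one characterization.
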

\begin{proof}
From $a_j=e_j$,  we have \[ a_j^\top  X a_j=X_{j,j}=|x_0(j)|^2.\] The positive semidefinite  requirement of $X$ yields $X_{i,j}^2\le X_{i,i}X_{j,j}$. 
The measurement $a_{n+1}^\top X a_{n+1}=|a_{n+1}\cdot x_0|^2$ enforces $a_{n+1}^\top X a_{n+1}$ to reach its upper bound among $X$ being positive semidefinite, i.e.,   the inequalities in  the following relation  become equalities,  \[
a_{n+1}^\top Xa_{n+1}=\sum_{i,j} a_{n+1}(i) X_{i,j}a_{n+1}(j)\]\[\le \sum_{i,j} |a_{n+1}(i) a_{n+1}(j)| \sqrt{X_{i,i}}\sqrt{X_{j,j}}\le (\sum_{j} |a_{n+1}(j)|\sqrt{X_{j,j}})^2
=|a_{n+1}\cdot x_0|^2,\]
where we used the assumption $a_{n+1}(j) x_0(j)>0$ for all $j=1,\ldots, n$. Hence, $X_{i,j}^2=X_{i,i}X_{j,j}$ for all $i,j$, which implies $ X=xx^\top $
 is the only feasible point with $ x:=(a_{n+1}x_0/|a_{n+1}|)$.

\end{proof}
%In fact, we can find a vector $y\in \mathbf{R}^N$, $p=n+1$ such that $Y_{T^\bot}\succ 0$ with $Y:=A^\top diag(y) A$.
%Suppose $x_0=e$, $a_j=e_j$ and \[
%Y=y_{n+1} a_{n+1} a_{n+1}^\top +\sum_{i=1}^n y_i e_i e_i^\top.
%\]
%Then $Y\cdot e=0$ implies
%\[-y_{n+1} a_{n+1} (a_{n+1}\cdot e)=\sum_{i=1}^n y_ie_i.\]
%Without loss of generality, assume $y_{n+1}=-1$ and $\{y_i\}_{i=1}^n$ are the entries of $(a_{n+1}\cdot e) a_{n+1} $. 
%To show the uniqueness, it suffices to show 
%\[
%\sum_{i=1}^n y_i (e_i\cdot z)^2\ge -y_{n+1} (a_{n+1}\cdot z)^2 \textrm{ for all $z$ orthogonal to } e, 
%\]
%\[
%\sum_{i=1}^n y_i z^2_i= (a_{n+1}\cdot e) (a_{n+1}\cdot z^2)\ge  (a_{n+1}\cdot z)^2.
%\]
%When $a_{n+1}$ has all positive entries, the above inequality  holds due to Cauchy Schwartz inequalities. 

In the second case,  
the exact recovery is  obtained via  a set of sensing vectors orthogonal to $x_0$. 
\begin{prop} 
Suppose that    some $n-1$ linear independent sensing vectors among $\{a_i\}_{i=1}^n$ exist  such that $x_0\cdot a_i=0$;  then, PhaseLift with measurement matrix $A$ recovers the matrix $x_0x_0^\top$ exactly. 
\end{prop}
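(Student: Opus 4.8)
My plan is to argue directly from positive semidefiniteness rather than through the dual-certificate proposition. First I would relabel the rows so that $a_1,\dots,a_{n-1}$ are the $n-1$ linearly independent sensing vectors with $a_i\cdot x_0=0$; being linearly independent and lying in the hyperplane $x_0^{\perp}$, they in fact span it. I then fix an arbitrary feasible point $X$ of the PhaseLift problem \eqref{PhaseLift} — so $X\succeq 0$ and $\mathcal{A}(X)=b^2$ with $b=|Ax_0|$ — and aim to show that $X=x_0x_0^{\top}$.

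The first step would be to pin down the rank and the range of $X$. For each $i\le n-1$ the $i$-th constraint reads $a_i^{\top}Xa_i=b_i^2=|a_i\cdot x_0|^2=0$; using a square-root factorization $X=R^{\top}R$ (available since $X\succeq 0$) this forces $\|Ra_i\|^2=0$, hence $Xa_i=R^{\top}Ra_i=0$. Therefore $\ker X\supseteq \mathrm{span}\{a_1,\dots,a_{n-1}\}=x_0^{\perp}$, an $(n-1)$-dimensional subspace, so $\mathrm{rank}(X)\le 1$; and since $X$ is Hermitian, $\mathrm{range}(X)=(\ker X)^{\perp}\subseteq (x_0^{\perp})^{\perp}=\mathrm{span}\{x_0\}$. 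Consequently $X=c\,x_0x_0^{\top}$ for some scalar $c\ge 0$.

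The second step is to fix the constant $c$. Because $A$ has rank $n$ and $x_0\neq 0$, the rows of $A$ cannot all lie in $x_0^{\perp}$, so there is an index $k$ with $\alpha:=a_k\cdot x_0\neq 0$ (equivalently, $b\neq 0$). The $k$-th constraint then yields $|\alpha|^2=b_k^2=a_k^{\top}Xa_k=c\,(a_k\cdot x_0)^2=c\,|\alpha|^2$, so $c=1$ and $X=x_0x_0^{\top}$. This exhibits the feasible set of \eqref{PhaseLift} as the single point $x_0x_0^{\top}$, which is exactly what PhaseLift returns.

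I do not expect a genuine obstacle. The only points that need care are the two standard facts about positive semidefinite (Hermitian) matrices invoked in the first step — that $v^{\top}Xv=0$ implies $Xv=0$, and that the range is the orthogonal complement of the kernel — together with the observation that the standing rank-$n$ hypothesis on $A$ is precisely what supplies a sensing vector not orthogonal to $x_0$, without which the scalar $c$ could not be pinned down. The complex case goes through verbatim, with transposes and $x_0^{\perp}$ read in the Hermitian sense.
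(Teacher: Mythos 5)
Your proof is correct, and it takes a genuinely different route from the paper's. The paper proves this proposition by constructing a dual certificate: after reducing to $x_0=e_1$ it picks $y$ with $y_i>0$ for $i\ge 2$ and verifies $Y=\sum_i y_i a_ia_i^\top$ satisfies $Y_T=0$ and $Y_{T^\perp}\succ 0$, then invokes the earlier sufficient-condition proposition. You instead argue purely on the primal side: the zero measurements force $a_i^\top Xa_i=0$, hence $Xa_i=0$ for every feasible $X\succeq 0$, so the kernel of $X$ contains the $(n-1)$-dimensional span of those sensing vectors, $X=c\,x_0x_0^\top$, and a single non-orthogonal row (guaranteed by the standing rank-$n$ assumption on $A$) pins $c=1$. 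Your argument is more elementary and self-contained — it exhibits the feasible set \eqref{PhaseLift} as literally a singleton, which is exactly what exact recovery means for the feasibility formulation, and it needs none of the dual machinery. It also sidesteps a subtlety in the certificate route: the sufficient-condition proposition assumes injectivity of $\mathcal{A}$ restricted to the tangent space at $x_0x_0^\top$, and in this configuration only one sensing vector has nonzero inner product with $x_0$, so that restricted map has a nontrivial kernel; closing that gap requires an additional positive-semidefiniteness argument of precisely the kind your proof already supplies. What the paper's approach buys in exchange is alignment with the general PhaseLift template (the same $Y_T=0$, $Y_{T^\perp}\succ 0$ certificate used for random frames), which is why it is presented that way there.
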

\begin{proof}
Without loss of generality, assume that  $x_0=e_1$ and write $A$ as  an $n\times n$  matrix,  
\[
A=
\left(
\begin{array}{cc}
1_{1\times 1} & *_{1\times (n-1)}\\
0_{(n-1)\times 1} &  A_1   
\end{array}
\right)
\]
where
 $A_1\in\mathbf{R}^{(n-1)\times (n-1)}$ has rank $n-1$, i.e., it consists of linear independent columns.
Choose  $y$ to be a vector with $y_i>0$ for $i\ge 2$. Then  $Y_T=0$ and for any $z\in\mathbf{R}^{n-1}$,
 \[
(A_1 z)^\top diag([y_2,\ldots, y_n]) A_1 z=0. \]
Because $y_i>0$ for all $i=2,\ldots, n$, then 
$ A_1z=0, \; i.e., z=0. $ Hence, $Y_{T^\bot}\succ 0$.
\end{proof}

This special choice of the first column of  $A$ indicates \textit{ the orthogonality between  $n-1$ sensing vectors $a_i$ and $x_0$}. 
However,  the orthogonality is  generally not  satisfied for arbitrary vector $x_0$.

In the third case,  the exact recovery can be obtained via some structured sensing matrix, which in fact fails the rank* condition ($M^A$ is not injective). 
\begin{prop} 
Suppose that $N=2n-1$ and the sensing vectors in $A$ are $a_i=e_i$ for $i=1,\ldots, n$ and \[ \textrm{ $a_{n+i}= e_i+\beta_i e_{i+1}$ with $\beta_i\neq 0$ for $i=1,\ldots, n-1$.}\] Suppose that the  entries of $x_0$ are nonzero.  Then PhaseLift with measurement matrix $A$ recovers the matrix $x_0x_0^\top$ exactly. 
\end{prop}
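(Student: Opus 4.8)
The plan is to exhibit an explicit dual certificate and invoke the proposition of Cand\`{e}s et al.\ quoted above, which has two hypotheses: injectivity of the restriction of $\mathcal{A}$ to the tangent space $T$ at $X_0 = x_0 x_0^\top$, and the existence of $y\in\mathbf{R}^{2n-1}$ for which $Y := \mathcal{A}^\top y = \sum_i y_i a_i a_i^\top$ satisfies $Y_T = 0$ and $Y_{T^\bot}\succ 0$. First I would dispose of the injectivity hypothesis exactly as in the discussion after the proposition: for any $z$, $\mathcal{A}(x_0 z^\top + z x_0^\top) = 2(Ax_0)\odot(Az)$, and since $a_i=e_i$ for $i\le n$ we have $(Ax_0)_i = x_0(i)\neq 0$ there, so the vanishing of this vector forces $z(i)=0$ for $i=1,\ldots,n$, i.e.\ $z=0$.

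For the certificate, the key structural observation is that $\mathrm{span}\{a_i a_i^\top : i=1,\ldots,2n-1\}$ is exactly the space of symmetric tridiagonal matrices: the $e_i e_i^\top$ ($i\le n$) give the diagonal, and modulo the diagonal, $(e_i+\beta_i e_{i+1})(e_i+\beta_i e_{i+1})^\top$ contributes $\beta_i(e_i e_{i+1}^\top + e_{i+1}e_i^\top)$ with $\beta_i\neq 0$, so the super/sub-diagonal is covered as well; a dimension count ($2n-1$ on each side) shows $y\mapsto\sum y_i a_i a_i^\top$ is a bijection onto symmetric tridiagonal matrices. Hence it suffices to produce a symmetric tridiagonal $Y$ with the required spectral properties, and the corresponding $y$ then exists automatically, with no sign constraint needed. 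I would take
\[
Y := \sum_{i=1}^{n-1} w_i w_i^\top, \qquad w_i := \frac{1}{x_0(i)}e_i - \frac{1}{x_0(i+1)}e_{i+1},
\]
which is well defined because every entry of $x_0$ is nonzero, is manifestly positive semidefinite and symmetric tridiagonal, and satisfies $Yx_0 = \sum_i(w_i\cdot x_0)\,w_i = 0$ since $w_i\cdot x_0 = 1-1 = 0$. A short support argument (inspect coordinate $1$, then $2$, and so on) shows $w_1,\ldots,w_{n-1}$ are linearly independent, hence a basis of $x_0^\bot$, so $\ker Y = \{v: w_i\cdot v = 0\ \forall i\} = \mathrm{span}\{x_0\}$ and $\mathrm{rank}(Y)=n-1$.

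It remains to translate these facts into $Y_T=0$ and $Y_{T^\bot}\succ 0$: since $T^\bot$ inside the symmetric matrices is precisely $\{W=W^\top: Wx_0=0\}$, the relation $Yx_0=0$ gives $Y\in T^\bot$, i.e.\ $Y_T=0$; and because $Y\succeq 0$ with kernel exactly $\mathrm{span}\{x_0\}$, the form $v\mapsto v^\top Y v$ is strictly positive on $x_0^\bot$, which is the content of $Y_{T^\bot}\succ 0$. Invoking the proposition then yields exact recovery.

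I expect the only genuinely delicate points to be bookkeeping: checking carefully that $\{a_i a_i^\top\}$ spans the tridiagonal matrices (so that $Y$ really is of the form $\mathcal{A}^\top y$), and correctly unwinding the meaning of $Y_T$ and $Y_{T^\bot}$ for this nonstandard frame — in particular, unlike in the convex-programming setting, the resulting $y$ need not be nonnegative here. As an alternative that sidesteps the dual-certificate machinery, one can argue directly as in the first special-frame proposition: $a_i=e_i$ gives $X_{ii}=x_0(i)^2$, then $a_{n+i}=e_i+\beta_i e_{i+1}$ with $\beta_i\neq 0$ forces $X_{i,i+1}=x_0(i)x_0(i+1)$, so every consecutive $2\times 2$ principal minor of $X$ vanishes; for positive semidefinite $X$ this forces column $i+1$ to be a scalar multiple of column $i$, and chaining these proportionalities with $X_{11}=x_0(1)^2$ pins down $X=x_0x_0^\top$.
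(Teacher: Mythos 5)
Your proposal is correct, and your main line of attack is genuinely different from the paper's. The paper argues directly and elementarily: after reducing to $x_0=e$ (replacing $a_i$ by $a_i$ rescaled entrywise by $x_0$), it notes that the measurements pin down $X_{i,i}=X_{i,i+1}=1$, and then forces the remaining entries to equal $1$ one diagonal at a time by computing $3\times 3$ principal minors of the form $\det\begin{pmatrix}1&1&\alpha\\ 1&1&1\\ \alpha&1&1\end{pmatrix}=-(1-\alpha)^2$, whose nonnegativity gives $\alpha=1$; hence $X=ee^\top$. You instead build an explicit dual certificate: your observation that $\mathrm{span}\{a_ia_i^\top\}$ is exactly the $(2n-1)$-dimensional space of symmetric tridiagonal matrices is the right structural point, and the choice $Y=\sum_{i=1}^{n-1}w_iw_i^\top$ with $w_i=x_0(i)^{-1}e_i-x_0(i+1)^{-1}e_{i+1}$ is a clean tridiagonal PSD matrix with kernel exactly $\mathrm{span}\{x_0\}$, so $Y_T=0$ and $Y_{T^\perp}\succ 0$, and the injectivity check on $T$ goes through as you say. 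What the certificate buys is a proof that works for general $x_0$ without the rescaling reduction and that exhibits the mechanism the paper's earlier proposition is designed for; what the paper's argument buys is complete self-containedness (no appeal to the duality proposition) and a template that it reuses for the complex $3n-2$ frame. Your alternative direct argument at the end is the closest in spirit to the paper's, but even there you diverge: you use the vanishing of consecutive $2\times 2$ principal minors together with the equality case of Cauchy--Schwarz in a Gram factorization to force consecutive columns to be proportional, rather than the paper's cascade of $3\times 3$ determinants; when chaining the proportionalities you should note explicitly that the known values $X_{i,i+1}=x_0(i)x_0(i+1)$ fix the proportionality constants to be $x_0(i+1)/x_0(i)$, which is what pins down $X=x_0x_0^\top$ rather than merely $\mathrm{rank}(X)=1$.
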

\begin{proof}
To simplify the discussion,  assume $x_0=e$ and replace vectors $a_i$ with vectors $a_i x_0$ for all $i$. 
Any matrix $X$ in the feasible set has the form,
\[
X\in \mathbf{R}^{n,n}=
\left(
\begin{array}{ccccc}
1& 1&  &&\\
1& 1& 1 &&\\
& 1& 1 &1&\\
& &  &\ldots&\\
& &  &1&1
\end{array}
\right),
\]
i.e.,  $X_{i,i+1}=X_{i,i}=X_{i+1,i}=1$.

Claim: Because $X$ is positive semidefinite,  any principal sub-matrices of $X$ are positive semidefinite, which implies that  $X=ee^\top$.\\
Start with  $\alpha:=X_{i_1,j_1}=X_{j_1,i_1}$ with $i_1=j_1+ 2$.   Consider the principal sub-matrix $\{X_{i,j}: i,j\in \{j_1,j_1+1,j_1+2\}\}$.
 Compute the determinant of this submatrix \[ -1+2\alpha-\alpha^2=-(1-\alpha)^2.\]  Hence, the nonnegative determinant yields   $\alpha=1$. Similar  arguments work for $i_1=j_1+3$,\ldots. In the end, all entries of $X$ must be $1$, i.e., $X=x_0x_0^\top$ is the only matrix in the feasible set.
\end{proof}

Readers can  apply the similar arguments  to the recovery of  $X_0=x_0x_0^\top$ with $x_0\in \mathbf{C}^n$ via the following matrix: Let  $A\in \mathbf{C}^{N,n}$ with $N=n+2(n-1)$ and 
\[
a_i=e_i,\; a_{n+i}= e_i+\beta_i e_{i+1},\textrm{ and  $a_{2n-1+i}= e_i+\gamma_i e_{i+1}$, where }
\]
\[
\textrm{   $\beta_i\neq \gamma_i $ are nonzero for $i=1,\ldots, n-1$}.
\]
See \cite{doi:10.1137/110848074} for more discussion on the usage of $N=3n-2$ sensing vectors.
%There are other sensing vectors which can recover $x_0^\top x_0$ exactly as shown in the following lemma. 
%
%\begin{prop} 
%Suppose  $a_{n+1}=[\beta_1,-\beta_2,\ldots, -\beta_n]$ with $\beta_i\ge 0$ for all $i$ and $\sum_{i=2}^n\beta_i<\beta_1$. We have
%\[
%(\beta_1z_1-\sum_{i=2}^n \beta_i z_i)^2\ge (\beta_1-\sum_{i=2}^n \beta_i) (\beta_1 z_1^2-\sum_{i=2}^n \beta_i z_i^2).\]
%Thus, $x_0x^\top_0$ is the only feasible point, $x_0=e$.
%\end{prop}
%\begin{proof}
%The inequality is equivalent to 
%\[
%-2\beta_1 z_1(\sum_{i=2}^n \beta_i z_i)+(\sum_{i=2}^n \beta_i z_i)^2\ge -\beta_1 [(\sum_{i=2}^n \beta_i z_1^2)+(\sum_{i=2}^n \beta_i z_i^2)]+(\sum_{i=2}^n \beta_i)(\sum_{i=2}^n \beta_i z_i^2),
%\]
%or equivalently 
%\[
%\beta_1(\sum_{i=2}^n \beta_i (z_1^2+z_i^2-2z_1z_i))\ge  (\sum_{i=2}^n \beta_i) (\sum_{i=2}^n \beta_i z_i^2)-(\sum_{i=2}^n \beta_i z_i)^2,\]
%or 
%\[
%\beta_1(\sum_{i=2}^n \beta_i (z_1-z_i)^2)\ge (1/2)\sum_{i, j} \beta_i\beta_j (z_i-z_j)^2.
%\]
%
%To proof this claim, let 
%$w_i:=z_1-z_i$ and $\beta_1\ge \sum_{i=2}^n \beta_i\ge 0$, then the difference between the left hand side and the right hand side not less than 
%\[(\sum_{i=2}^n \beta_i)(\sum_{i=2}^n \beta_i w_i^2)-(1/2)\sum_{i, j} \beta_i\beta_j (w_i-w_j)^2=(\sum_{i=2}^n \beta_i w_i)^2\ge 0
%\]
%completes the proof.
%
%\end{proof}
%
%

\subsection{Reduction of $N/n$ via standardized frames}

%
%
%
%\begin{rem} Compared with Eq.~(\ref{Dual}), $Y=I_{n\times n}$ with $y=e$ almost satisfies the dual certificate condition,  except eigenvalues of $Y_{T^\bot}$ is not strictly less than one. 
%
%\end{rem
%
%

In the following,  some orthogonality on $A$, $A^\top A=I_{n\times n}$ is expected  to implement the matrix recovery algorithm. We say that a measurement matrix (a frame)  $A$ is \textit{standardized} if $A$ consists of orthonormal  columns, i.e.,  $A^\top A=I_{n\times n}$.
In fact,
given any measurement matrix $A\in \mathbf{R}^{N\times n}$ with rank $n$, we can 
take the QR decomposition of the measurement matrix $A$, $A=QR$ with $Q\in \mathbf{R}^{N\times n}$ consisting of orthonormal  columns and $R\in \mathbf{R}^{n\times n}$ being upper triangular. 
The  rank of $A$ is equal to  the  rank of $R$. Hence, denoting $Rx$ by $y$,
the problem is reduced to  solving $y$ from the measurements \[
|Ax|=|QRx|=|Qy|=b.
\] 
Once $y$ is obtained, $x$ can be computed via simply inverting the matrix $R$. Hence, the original frame $A$ is \textit{equivalent} to the standardized frame $Q$ in the sense that the two transforms $A,Q$ have the same range. 
That is,  $M^A$ is injective if and only if $M^Q$ is injective.

In the section, we propose one modification on PhaseLift  to recover the rank one matrix $X_0$. 
The idea  is based on the following simple fact.
\textit{ Among the feasible set $\mathcal{A}(X)=b^2$ and $X$ being positive semidefinite, to recover the rank one solution, we should choose the matrix $X$ whose leading eigenvalue is maximized. }

 \begin{figure}[htbp]
%\begin{center}
\centering
\subfigure[One  optimal solution]{\includegraphics[width=4.8cm]{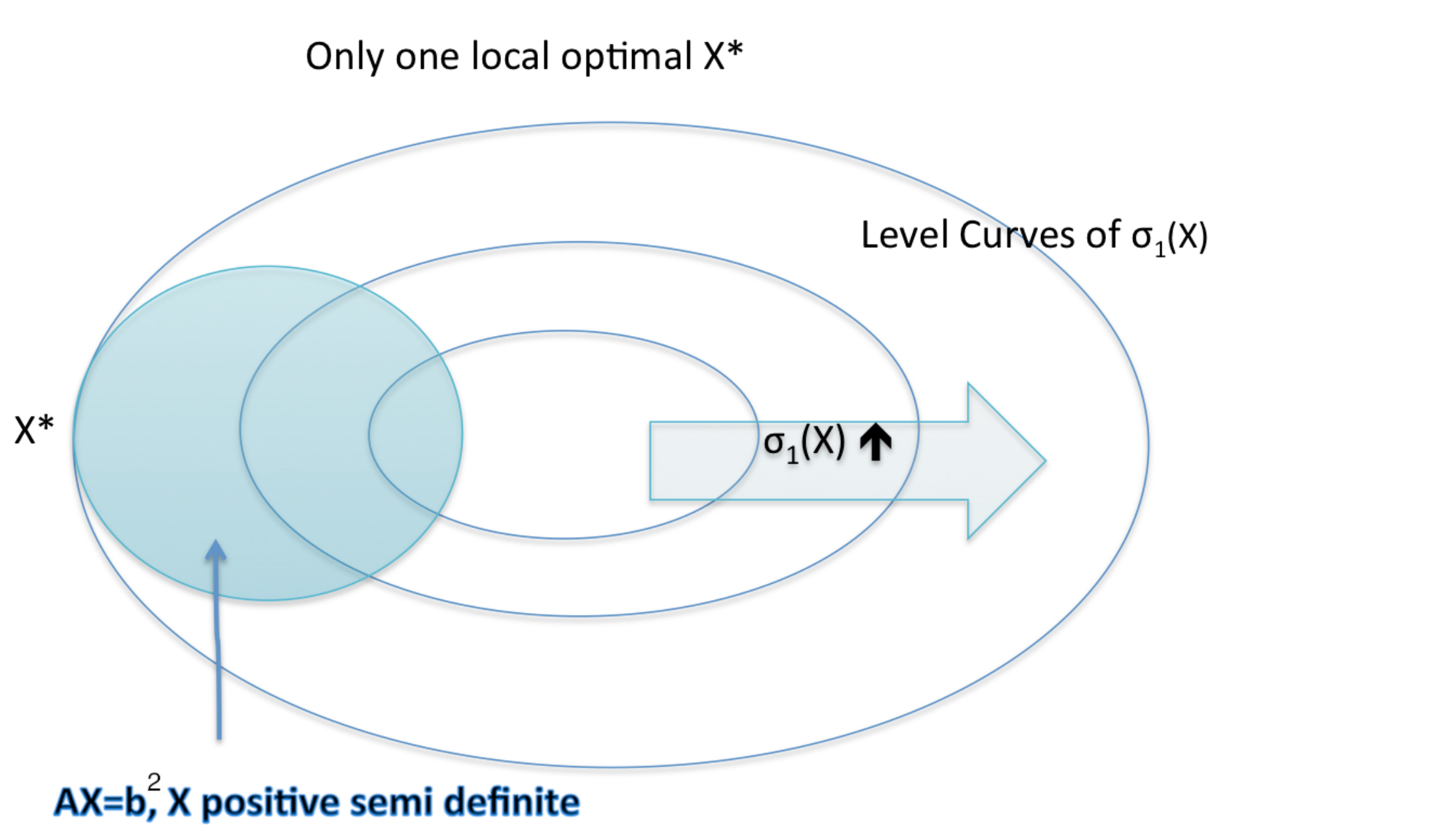}}~
\subfigure[Two optimal solutions]{\includegraphics[width=4.8cm]{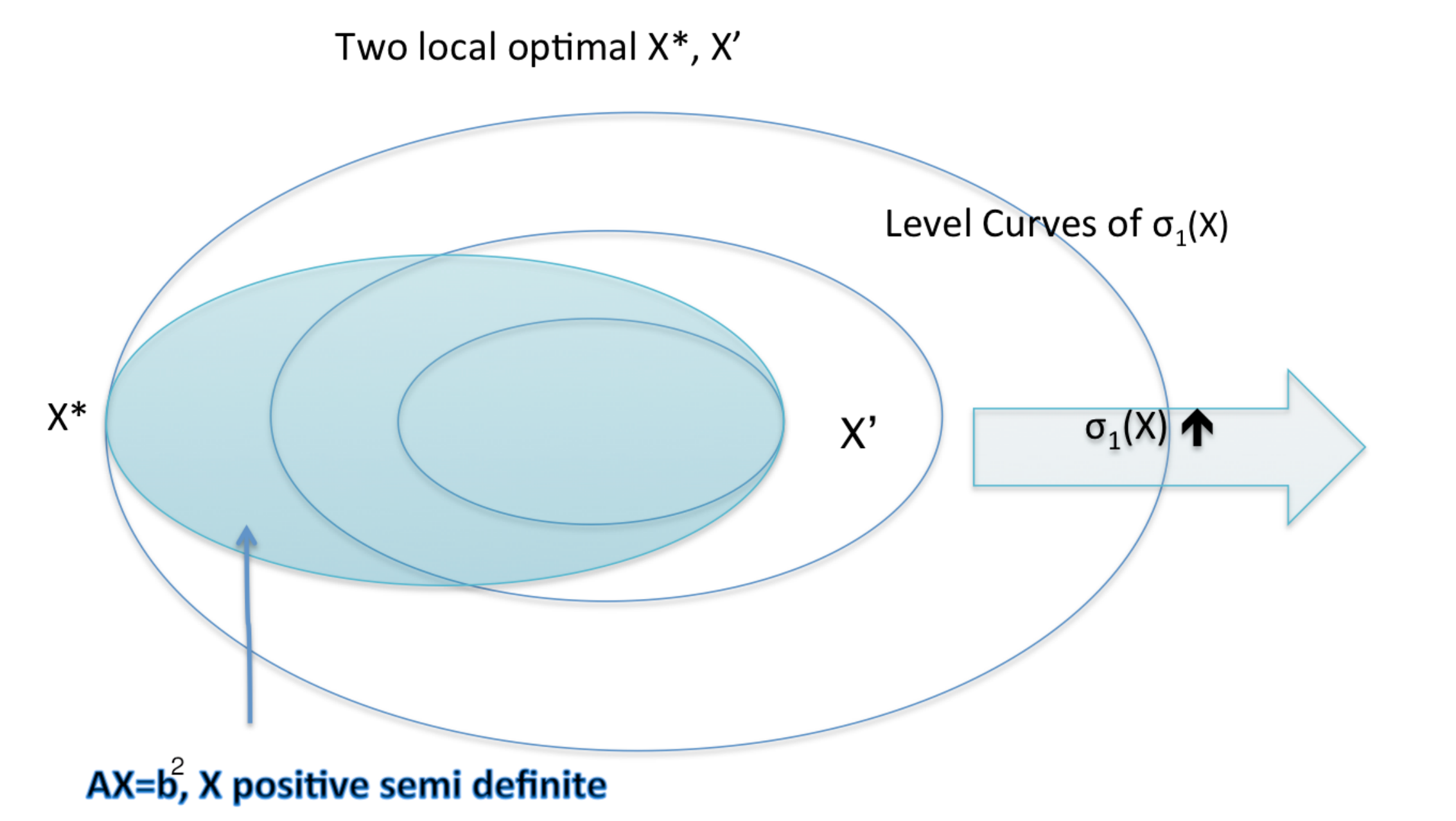}}
\caption{{  Maximizing the leading eigenvalue yields the rank one solution. }}
\label{local1}
%\end{center}
\end{figure}

Consider the model
\[
\min_X - \sigma_1(X),
\]
subject to $X$ positive semidefinite and $\mathcal{A}X=b^2$,
where 
 $\sigma_1(X)$ refers to  the largest eigenvalue function of $X$.  See Fig.~\ref{local1}. 
 
 Then we have the following theoretical result. 
\begin{thm} Suppose that $A$ is a standardized matrix   with $N\ge 2n-1$ in the real case and $N\ge 4n-2$ in the complex case. Then with probability one, the global minimum occurs if and only if the minimizer $X$ is exactly $X=x_0 x_0^\top$.
\end{thm}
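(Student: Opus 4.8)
The plan is to use standardization to freeze the trace of every feasible matrix, which turns the maximization of the leading eigenvalue into a rank-one constraint, and then to close the argument with injectivity of $M^A$. First I would record that the trace is constant on the feasible set: since $A^\top A = I_{n\times n}$ we have $\sum_{i=1}^N a_i a_i^\top = A^\top A = I_{n\times n}$, so summing the constraints $a_i^\top X a_i = b_i^2$ over $i$ gives $\mathrm{tr}(X) = \sum_i a_i^\top X a_i = \sum_i b_i^2 = \|A x_0\|^2 = x_0^\top A^\top A x_0 = \|x_0\|^2$. Hence every feasible $X$, and in particular $X_0 := x_0 x_0^\top$, satisfies $\mathrm{tr}(X) = T := \|x_0\|^2$. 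The feasible set is closed and, having fixed trace, bounded, hence compact, so $\sigma_1$ attains its maximum there and a global minimizer of $-\sigma_1$ exists.

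Next, for any $X \succeq 0$ with eigenvalues $\lambda_1 \ge \cdots \ge \lambda_n \ge 0$ one has $\sigma_1(X) = \lambda_1 \le \sum_j \lambda_j = \mathrm{tr}(X) = T$, with equality exactly when $\lambda_2 = \cdots = \lambda_n = 0$, i.e.\ when $\mathrm{rank}(X) \le 1$. Since $x_0 \ne 0$, $X_0$ is feasible with $\sigma_1(X_0) = \|x_0\|^2 = T$, so the optimal value of $\min_X -\sigma_1(X)$ over the feasible set equals $-T$ and is attained at $X_0$; this gives the ``if'' direction.

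For the ``only if'' direction, let $X$ be any global minimizer. By the two steps above $\sigma_1(X) = T$, so $X$ is positive semidefinite of rank one with trace $T$, say $X = \hat x \hat x^\top$ with $\|\hat x\|^2 = T$ (Hermitian transpose in the complex case). Feasibility then gives $|a_i^\top \hat x|^2 = a_i^\top X a_i = b_i^2 = |a_i^\top x_0|^2$ for every $i$, i.e.\ $M^A(\hat x) = M^A(x_0)$. Now I invoke injectivity of $M^A$: in the real case a sensing matrix whose rows are sampled from a continuous distribution satisfies the rank* condition with probability one, and then Proposition~\ref{2n} (applicable since $N \ge 2n-1$) forces $\hat x = \pm x_0$; in the complex case the analogous genericity statement with $N \ge 4n-2$ (see the references cited above) forces $\hat x = e^{i\theta} x_0$. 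Either way $X = \hat x \hat x^\top = x_0 x_0^\top = X_0$.

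The substance of the proof is the elementary observation in the first two paragraphs; the only point requiring care is reconciling the deterministic hypotheses on the standardized matrix $A$ with the ``with probability one'' conclusion. This is handled by recalling that standardization via QR preserves injectivity of $M^A$ (as noted in the preceding subsection, $M^A$ is injective if and only if $M^Q$ is) and that the rank* condition --- equivalently, injectivity of $M^A$ in the ranges of $N$ considered --- fails only on a measure-zero set of frames. I do not expect any genuine analytic obstacle beyond this bookkeeping.
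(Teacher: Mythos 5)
Your proof is correct and follows essentially the same route as the paper: standardization fixes $\mathrm{tr}(X)=e\cdot b^2$ on the feasible set, the bound $\sigma_1(X)\le \mathrm{tr}(X)$ for positive semidefinite $X$ holds with equality exactly for rank-one matrices, and the thresholds $N\ge 2n-1$ (real) and $N\ge 4n-2$ (complex) guarantee, with probability one, that the rank-one feasible point is unique up to global phase, hence equals $x_0x_0^\top$. Your write-up is merely more explicit than the paper's about the existence of a minimizer and about reconciling the deterministic hypothesis on $A$ with the ``with probability one'' clause, which is a welcome clarification rather than a different argument.
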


\begin{proof}
Because  $|Ax|^2=b^2$ and $A$ consists of  orthogonal columns, 
\[
e\cdot b^2=tr(X)=\sum_{i=1}^n \sigma_i(X), \]
 where $\sigma_i(X)$ refers to the $i$-th eigevlaue of $X$.
Because $X$ is positive semidefinite,  the largest eigenvalue of $X$, which cannot exceed $e\cdot b^2$, is maximized if and only if $X$ is a rank one matrix.   Finally, according to the above results, when $N$ exceeds the thresholds $2n-1$ or $4n-2$,  with probability one, the rank one matrix is unique, which completes the proof.  
\end{proof}

To address the problem, we propose  the following alternating direction method(ADM).
The ADM can be formulated as 
\[
\min_{X,Y} L_\beta(X,Y,\lambda):=-\sigma_1(X)-\lambda (X-Y)+\beta \|X-Y\|_F^2/2,
\]
subject to $X$ positive semidefinite and $\mathcal{A}Y=b$.
Hence, 
  the update of $X,Y$ is 
\begin{equation}\label{X}
arg\min_{X} -\sigma_1(X)+\beta \|X-Y-\lambda/\beta\|_F^2/2  \textrm{  subject to positive semidefinite,}
\end{equation}
\[
arg\min_Y  \beta \|X-Y-\lambda/\beta\|_F^2/2   \textrm{  subject to $\mathcal{A}Y=b^2$}.\]
The iteration becomes
\begin{enumerate}
\item Update $X$: Write $Y^k=UD_Y U^\top $;  then, 
thanks to  the rotational invariance of Frobenius norm, the minimizer in Eq.~(\ref{X}) is
\[
X^{k+1}=UD_X U^\top, \textrm{ where }\;   D_X=\max( D_{Y+\lambda/\beta},0)+\beta^{-1} e_1e_1^\top,
\]
where $D_{Y+\lambda/\beta}$ is the diagonal matrix of the eigenvalue decomposition of $Y^k+\lambda^k/\beta$ and the diagonal entries are in a decreasing order, i.e., the $(1,1)$ entry is the largest eigenvalue and will be added by $\beta^{-1}$ in the $X$ update.

\item Update $Y$ via the projection of $X-\lambda/\beta$,
\[
Y^{k+1}=\mathcal{A}^\top (\mathcal{A}\mathcal{A}^\top )^{-1} b+(I-\mathcal{A}^\top (\mathcal{A}\mathcal{A}^\top )^{-1} \mathcal{A})Z, \textrm{ where } Z=X^{k+1}-\lambda^k/\beta.
\]
The matrix $\mathcal{A}^\top (\mathcal{A}\mathcal{A}^\top )^{-1} \mathcal{A}$ is the orthogonal projector onto $Range(\mathcal{A}^\top )$ which is spanned by  $\{a_ia_i^\top \}_{i=1}^N$, each of which  has trace one.
\item Update $\lambda$:
\[
\lambda^{k+1}=\lambda^k-\beta (X^{k+1}-Y^{k+1}).
\]
\end{enumerate}

\begin{rem}[Counterexamples]

Because  $\sigma_1(X)$ is convex in $X$,  minimizing $-\sigma_1(X)$ yields  a non convex minimization problem.
Theoretically,  there is no guarantee that  the global optimal solution  can  always  be found numerically; however, the
 empirical study shows that the exact recovery will occur  with high probability.

Here is one counterexample. 
\[
A=\left(
\begin{array}{ccc}
  1 & 1  & 1  \\
  1 & -1  & -1  \\
  1 &  \sqrt{3/2} & 0  \\
  1 & -\sqrt{3/2}  & 0   \\
 1 & 0  &\sqrt{3}   \\
 1 & 0  &-\sqrt{3}   
\end{array}
\right)
\]
Then the feasible set consists
 of matrices  $\left(
\begin{array}{ccc}
  1-3\mu & 0  & 0  \\
  0 & 2\mu  & 0  \\
  0 &  0 & \mu   
\end{array}
\right)$ with $\mu\in [0,1/3)$. The maximization of the leading eigenvalue leads to 
   two possible solutions to $|Ax|^2=b^2$: one is $\mu=0$ (the rank-one solution) and the other is $\mu=1/3$ (the rank-two solution), 
depending on the initialization. See Fig.~\ref{localOpt0}.

\end{rem}

 \begin{figure}[htbp]
\begin{center}
\includegraphics[width=0.5\textwidth]{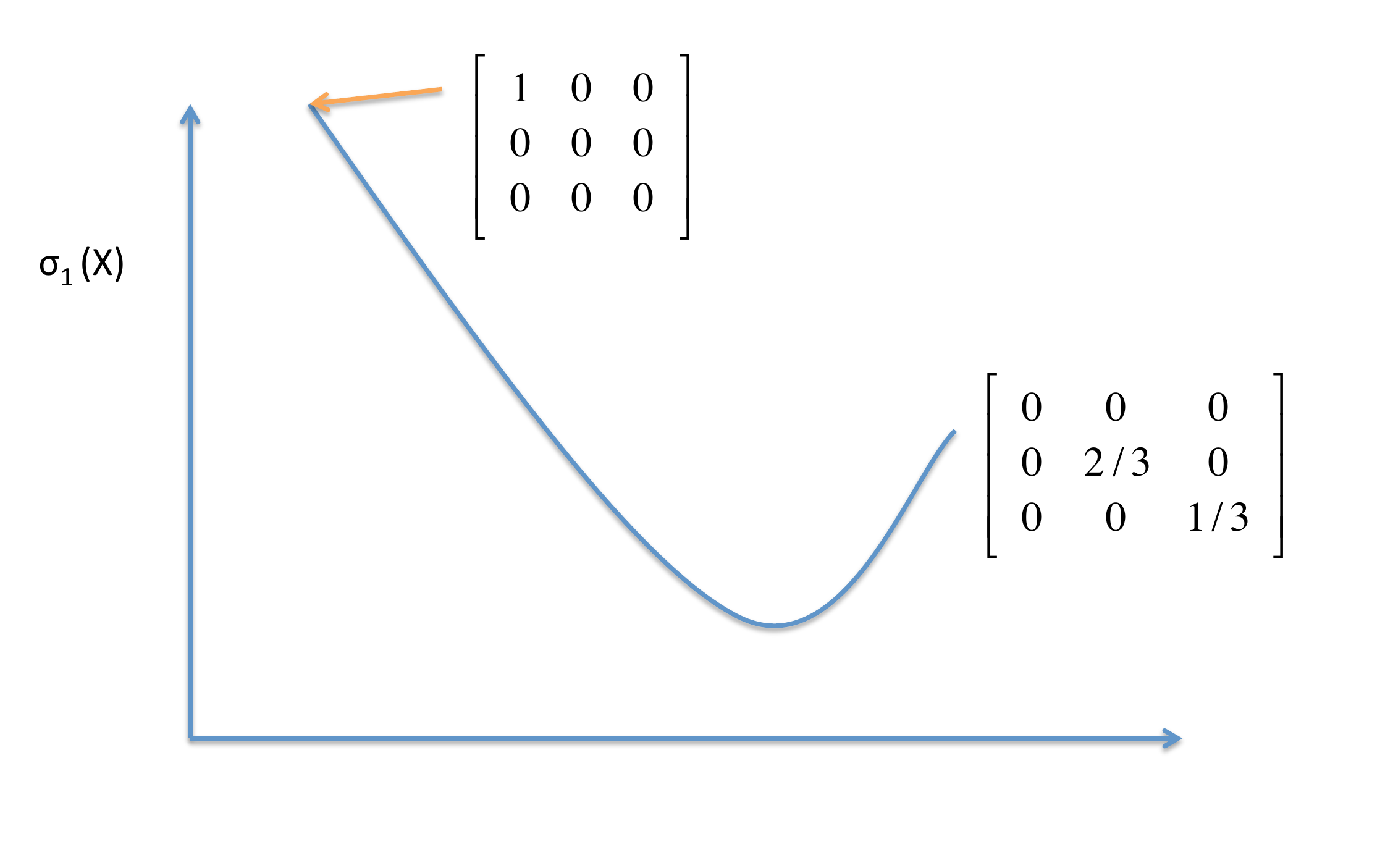}
\caption{{Maximizing the leading eigenvalue $\sigma_1(X)$ yields two local optimal solutions.}}
\label{localOpt0}
\end{center}
\end{figure}

 The following  experiments illustrate  that when the ratio  $N/n$ is not  large enough, the solution in PhaseLift is not rank one; we can successfully recover the rank one matrices via maximizing the leading eigenvalue;
 see Table~\ref{defaultR} for the real case and  Table~\ref{defaultC} for the complex case. 
%The last column of  Table~\ref{defaultR} is  based on $2000$ iterations. 

%The imperfectness is due to the total iteration count $2000$. In fact, we have  all $50$ successes in the non convex approach via $Q$ when more iterations are used.

 \begin{table}[htdp]
\caption{The number of  successes out of $50$ random  trials with $N=2n-1$. ``via Q" refers to the standardized measurement.}
\begin{center}

\begin{tabular}{|c|c|c|c|c|}
\hline
\multirow{2}{*}{n} &
\multicolumn{2}{c|}{PhaseLift} &
\multicolumn{2}{c|}{$\min-\sigma_1(X)$}\\
\cline{2-5}
  & via $A$ & via $Q$
%\cline{4-5}
  & via $A$ & via $Q$\\
\hline
 5&   37&    38&   13& 50 \\
  10&  28&   31&  11&49 \\
  15 & 17 &   21   & 13& 47 \\
   20 &16  &  25   & 15& 48 \\
   25 &10   & 13    & 4& 48\\
    30 & 3    & 7    &13& 48 \\
    35& 3     &4    &16& 47 \\
    40 & 1     &5  & 6& 46 \\
    45& 0     &0 &   4& 48\\
   50&  0     &0   & 12& 50 \\
\hline
\end{tabular}
\label{defaultR}
\end{center}
\end{table}

 \begin{table}[htdp]
\caption{The number of  successes out of $20$ random  trials via $N=2n-1, 3n-1, 4n-2$ standardized measurements ( complex case). }
\begin{center}
\begin{tabular}{|c|c|c|c|}
\hline
n& N=2n-1& N=3n-1 & N=4n-2\\ 
\hline
 5&   3&    20&    20\\
  10&  2&   20 &  20\\
  15 & 1 &   20   & 20\\
   20 & 0  &  20   & 20\\
   25 & 0   & 20   &20\\
    30 & 0    & 20    &20\\
    35& 0     & 20   &20\\
    40 & 0     &20  &  20\\
    45& 0     &20 &   20\\
   50&  0     &20   & 20\\
   \hline
\end{tabular}
\end{center}
\label{defaultC}
\end{table}%

The result of nonconvex minimization depends on the choice of initialization for $X^0$. When $X^0$ is near $X_0$, the exact recovery can be obtained. 
\begin{prop}   Let $X_0=x_0 x_0^\top $. Let $X$ be some positive semidefinite matrix.  
Let $f(t)$ be the spectral norm  of matrices $tX_0+(1-t)X$, \[
f(t)=\|tX_0+(1-t)X\|, \; 0\le t\le 1.
\]  
Let $v_1$ be the  unit eigenvector corresponding to  the largest eigenvalue  of $X$. If $tr(X_0 v_1v_1^\top )\ge \|X\|$, then $f(t)$ increases on the interval $[0,1]$. Hence, $\min (-\sigma_1(X))$ yields the recovery of $X_0$.
\end{prop}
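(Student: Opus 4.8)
The plan is to use two structural facts about $M(t):=tX_0+(1-t)X$ and $f(t)=\|M(t)\|$. First, since $X_0=x_0x_0^\top\succeq0$ and $X\succeq0$, every $M(t)$ with $t\in[0,1]$ is a convex combination of positive semidefinite matrices, hence $M(t)\succeq0$ and $f(t)=\sigma_1(M(t))$, the largest eigenvalue, rather than $\max(\sigma_1,-\sigma_n)$. Second, $\sigma_1(\cdot)$ is convex, being the pointwise supremum $\sigma_1(M)=\sup_{\|u\|=1}u^\top M u$ of functionals that are affine in $t$; hence $f$ is convex on $[0,1]$. These two facts are what make the argument go through.

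The second step is a one-sided inequality: $f(t)\ge f(0)$ for all $t\in[0,1]$. I would prove this by testing the Rayleigh quotient against the unit top eigenvector $v_1$ of $X$,
\[
f(t)\ \ge\ v_1^\top M(t) v_1\ =\ v_1^\top X v_1 + t\, v_1^\top(X_0-X)v_1\ =\ \|X\| + t\big(tr(X_0 v_1v_1^\top)-\|X\|\big)\ \ge\ \|X\|\ =\ f(0),
\]
where the last inequality uses $t\ge0$ together with the hypothesis $tr(X_0v_1v_1^\top)\ge\|X\|$. Equivalently, this says the right-hand derivative of $f$ at $0$ is nonnegative. Then I would upgrade this to genuine monotonicity using convexity: for $0\le s\le t\le1$ with $t>0$ one has the convex-combination identity $M(s)=\tfrac{t-s}{t}M(0)+\tfrac{s}{t}M(t)$, so by the triangle inequality and the previous step
\[
f(s)\ \le\ \tfrac{t-s}{t}f(0)+\tfrac{s}{t}f(t)\ \le\ \tfrac{t-s}{t}f(t)+\tfrac{s}{t}f(t)\ =\ f(t).
\]
Hence $f$ is nondecreasing on $[0,1]$, which is the asserted monotonicity.

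For the recovery consequence I would observe that $\mathcal{A}$ is affine, so $\mathcal{A}(M(t))=t\,\mathcal{A}(X_0)+(1-t)\mathcal{A}(X)=b^2$ whenever $X$ is feasible, and we already have $M(t)\succeq0$; thus the whole segment from $X$ to $X_0$ lies in the feasible set, and $-\sigma_1$ is nonincreasing along it. Consequently a feasible $X$ satisfying $tr(X_0v_1v_1^\top)\ge\|X\|$ cannot be a strict local minimizer of $-\sigma_1$ unless $X=X_0$, so maximizing $\sigma_1$ drives the iterate toward $X_0$.

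I expect the only delicate point to be the non-smoothness of the spectral norm: when the leading eigenvalue of $X$ is degenerate, $f$ need not be differentiable at $t=0$, so a direct eigenvalue-perturbation derivative is not available. This is sidestepped by replacing the derivative with the variational lower bound $f(t)\ge v_1^\top M(t)v_1$, which requires only $\sigma_1(\cdot)\ge u^\top(\cdot)u$ for unit $u$, and by keeping $M(t)\succeq0$ throughout so that $\|M(t)\|$ genuinely equals $\sigma_1(M(t))$; once both are in place, convexity of $\sigma_1$ does the rest.
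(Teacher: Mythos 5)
Your proposal is correct and follows essentially the same route as the paper: convexity of $f$ plus the lower bound $f(t)\ge f(0)+t\,\bigl(tr(X_0v_1v_1^\top)-\|X\|\bigr)$, which is exactly the paper's subgradient inequality with subgradient $G=v_1v_1^\top$ (the paper cites the subdifferential of the spectral norm, whereas you derive the same inequality elementarily from the Rayleigh quotient $f(t)\ge v_1^\top M(t)v_1$). Your explicit convexity argument upgrading $f(t)\ge f(0)$ to monotonicity on $[0,1]$ fills in a step the paper leaves implicit.
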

\begin{proof}
Observe that $f(t)$ is convex in $t\in [0,1]$. It suffices to show that $f(t)\ge f(0)$ for  $t\in (0,1)$. According to the subdifferential of the matrix spectral norm\cite{Watson199233}, we have \[
t^{-1}(f(t)-f(0))\ge  tr((X_0-X)^\top G),
\] 
where $G$ is a subgradient of the spectral norm at $X$. Choose $G=v_1 v_1^\top $, then we have $ tr((X_0-X)^\top G)=tr(X_0 v_1v_1^\top )-f(0)$, which completes the proof. 
\end{proof}

\begin{rem} We provide a few examples to illustrate the recovery  of $X_0$ via $-\min \sigma_1(X)$.
Let $x_0=e\in \mathbf{R}^3$ and $X_0=ee^\top\in\mathbf{R}^{3\times 3}$. Suppose $a_1=e_1$, $a_2=e_2$ and $a_3=e_3$. Then, any feasible matrix has the form
\[
X=\left(
\begin{array}{ccc}
1  & 1-\alpha_1   & 1-\alpha_2   \\
 1-\alpha_1 & 1   & 1-\alpha_3   \\
1-\alpha_2  & 1-\alpha_3   & 1   
\end{array}
\right) \textrm{ with } \alpha_i\ge 0.
\]
Consider the case $\alpha_1=\alpha_2=\alpha$ and $\alpha_3=\alpha t$, then denote
\begin{equation} \label{M3}
X_{\alpha,t}:=\left(
\begin{array}{ccc}
1  & 1-\alpha   & 1-\alpha   \\
 1-\alpha & 1   & 1-\alpha t   \\
1-\alpha  & 1-\alpha t  & 1   
\end{array}
\right),
\end{equation}
and
 \[ \det(X_{\alpha,t})=\alpha^2 t(4-2\alpha -t).\] Thus, $X_{\alpha,t}$ is positive semidefinite if and only if  \[ \textrm{ $0\le t\le 4-2\alpha$ and $0\le \alpha\le 2$.}\]
 Hence, $X_{\alpha, 4-2\alpha}$ has two positive eigenvalues and one zero eigenvalue if $0<\alpha< 2$. For instance,  
when $\alpha=1/2$ and $t=3$, $X_{\alpha,t}$ has eigenvalues $1.5, 1.5, 0$. From the previous proposition, a positive semidefinite matrix  $X_{\alpha,t}$ with $0\le \alpha\le 1/2$ can return to $X_0$ via maximizing the leading eigenvalue.  
See Fig.~\ref{Matrix3}.
\end{rem}
%%%%%%%%%%%%%%%%%%%

 \begin{figure}[htbp]
\begin{center}
\includegraphics[width=0.3\textwidth]{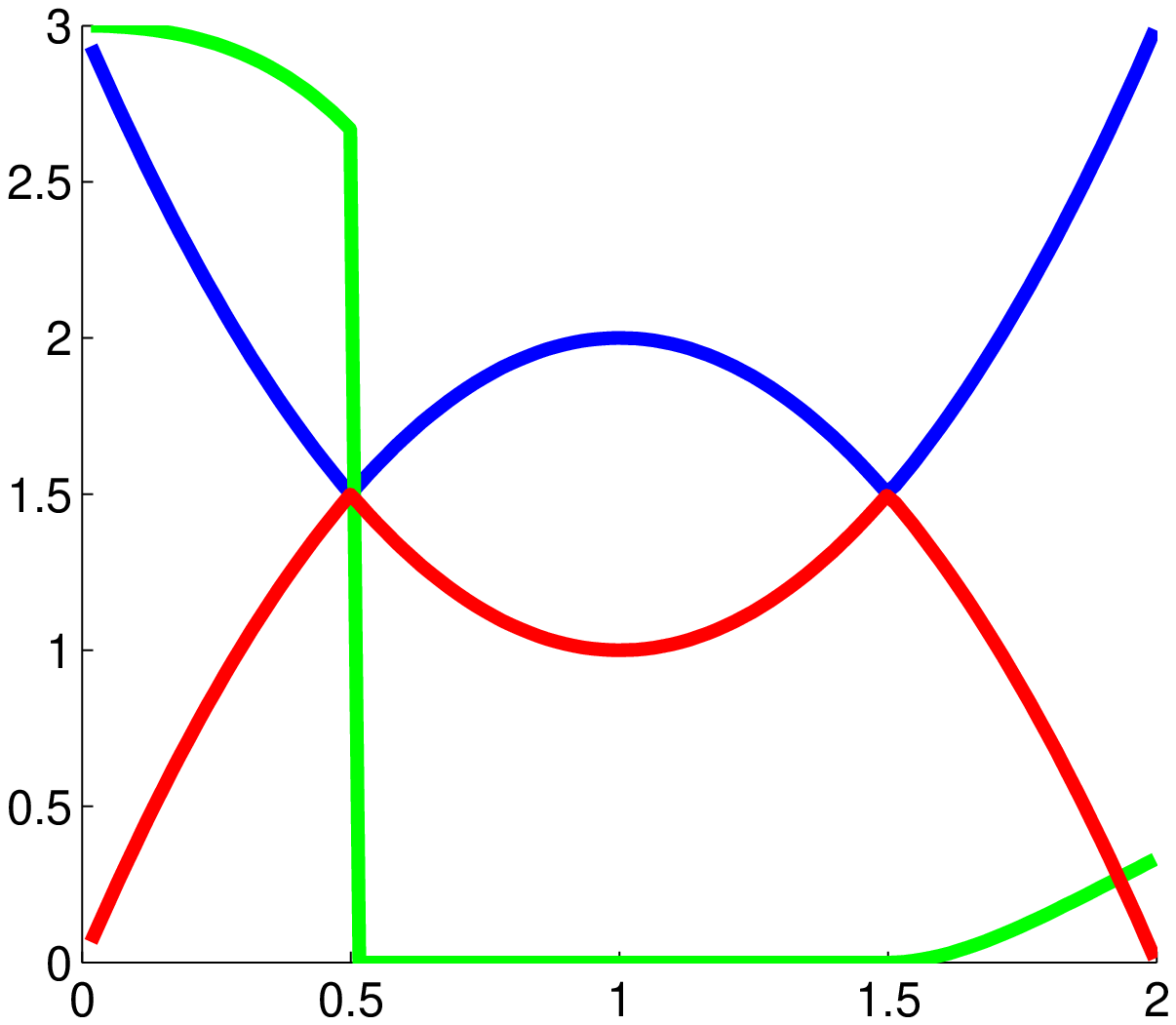}
\includegraphics[width=0.3\textwidth]{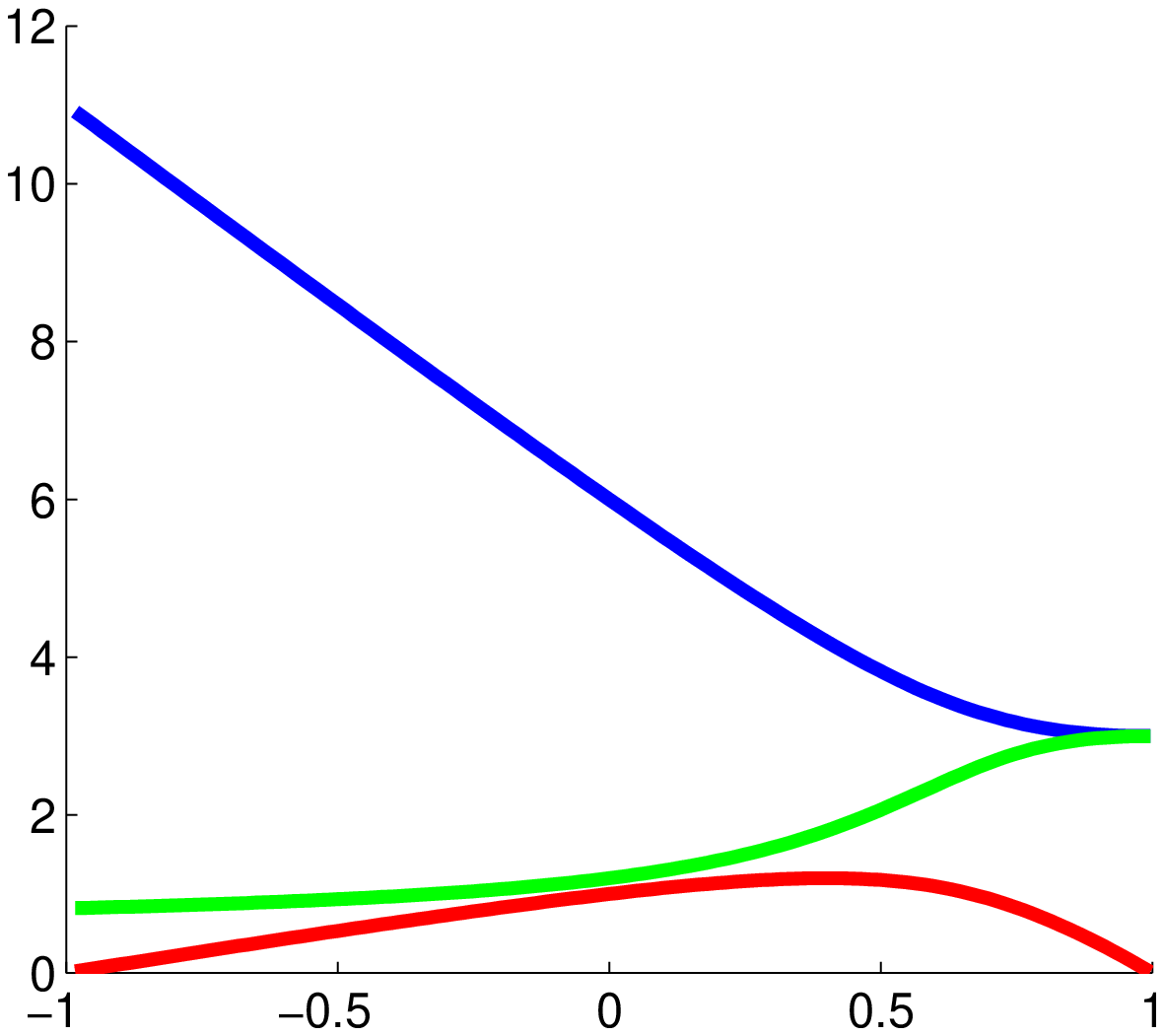}
\includegraphics[width=0.3\textwidth]{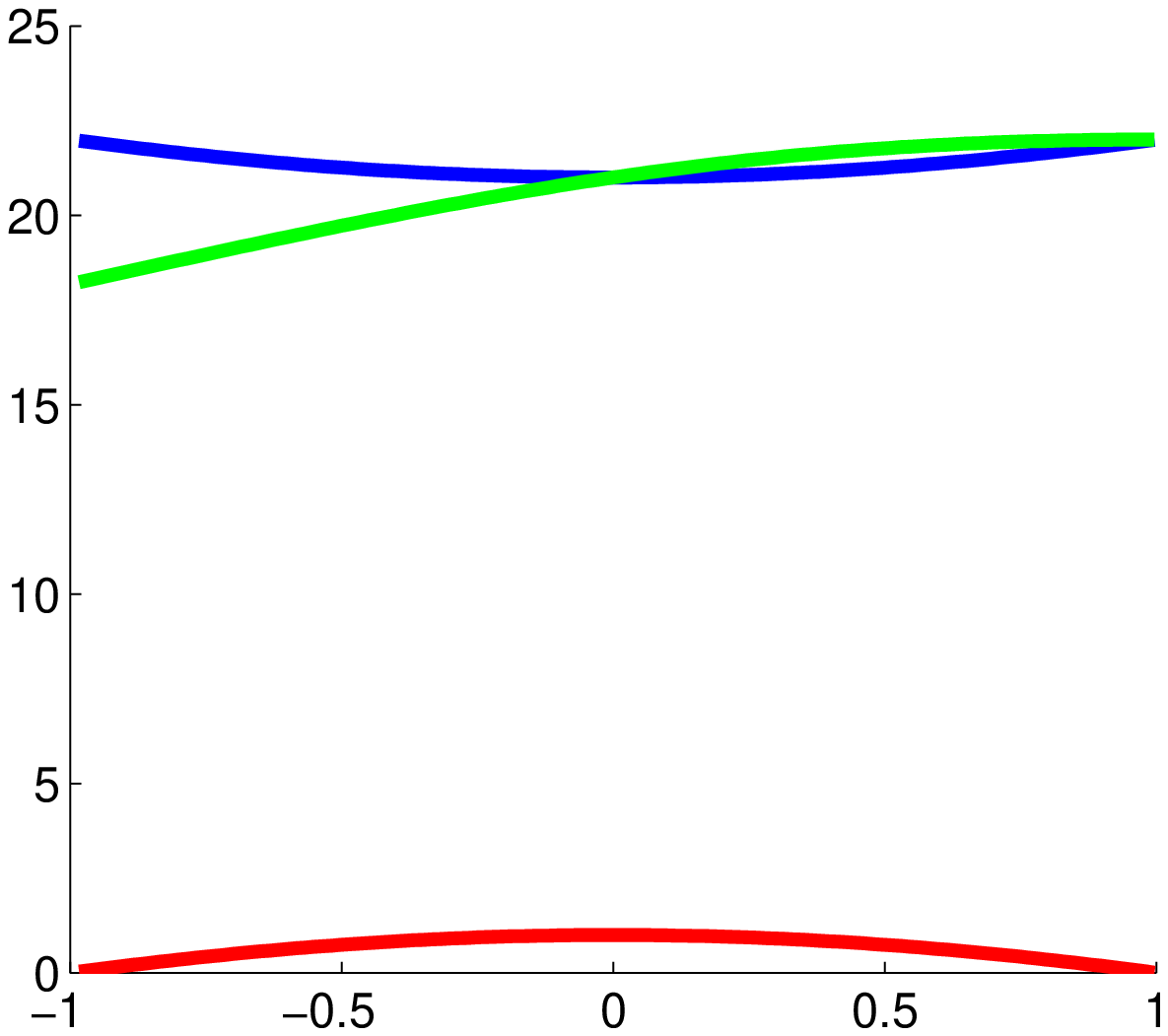}
\caption{{  The left subfigure shows $\sigma_1(X_{\alpha, 4-2\alpha})$ (blue), $\sigma_2(X_{\alpha, 4-2\alpha})$ (red) and $|x_0\cdot v_1|^2$ (green).
The middle and right subfigures show the results of $X_{\alpha,2\alpha-1}$ in Eq.~(\ref{QR2}) and $X_{\alpha}$ in Eq.~(\ref{QR3}), respectively.  The x-axis  represents  $\alpha$ values.
}}
\label{Matrix3}
\end{center}
\end{figure}

The next example illustrates the necessity of trace invariance  in recovering $X_0$. 
When the matrix  trace is not constant in the feasible set, then maximizing the leading eigenvalue does not recover $X_0$ in general. For instance, consider$X_0=x_0x_0^\top$ with
$x_0=e=[1, 1, 1]^\top$, and \[
A=\left(
\begin{array}{ccc}
 1 & 0  & 1  \\
 0 & 1  & 0  \\
 0 & 0  & 1  \\
1  & 1  & 2  
\end{array}
\right),\; b=|Ae|=\left(
\begin{array}{c}
 2 \\
 1  \\
 1  \\
4
\end{array}
\right).\]
Any matrix $X$ in the feasible set has the form 
\begin{equation}
X_{\alpha,\beta}:=\left(
\begin{array}{ccc}
3-2\beta  & \beta   & 2-\alpha   \\
\beta & 1   & \alpha   \\
2-\alpha  & \alpha   & 1   
\end{array}
\right) \textrm{ with } \det(X_{\alpha,\beta})=-(2\alpha-\beta-1)^2,
\end{equation}
and thus $ \det(X_{\alpha,\beta})\ge 0$ yields $\beta=2\alpha-1$. In fact, the feasible set consists of matrices \begin{equation}\label{QR2}
X_{\alpha,2\alpha-1}=(1-t) \hat e \hat e^\top+tee^\top, \hat e=[3,1,1]^\top, \; t=(\alpha+1)/2\in [0, 1].
\end{equation}
Maximizing the leading eigenvalue yields the solution $e\hat e$, which is not $X_0$. Alternatively, consider the QR factorization,
\[
A=QR=\left(
\begin{array}{ccc}
 1/\sqrt{2} & 0  & -1/\sqrt{3}  \\
 0 & 1  & 0  \\
 0 & 0  & 1/\sqrt{3}  \\
1/\sqrt{2}  & 0  & 1/\sqrt{3}  
\end{array}
\right)
\left(
\begin{array}{ccc}
 \sqrt{2} & \sqrt{2}  & \sqrt{2}  \\
 0 & 1  & 0  \\
 0 & 0  & \sqrt{3}  
\end{array}
\right),\] 
which yields
 the problem instead,
\[ b=|QRe|=|Qx_0|,\;  x_0:=Re=\left(
\begin{array}{c}
 3\sqrt{2} \\
 1  \\
\sqrt{3}
\end{array}
\right).\]
Then   the feasible set $
\{X: diag(QXQ^\top)=b^2, X\succeq 0\}
$ consists of 
\begin{equation}
X_{\alpha}:=\left(
\begin{array}{ccc}
18  & 3\sqrt{2}\alpha   & 3\sqrt{6}  \\
3\sqrt{2}\alpha & 1   & \sqrt{3}\alpha   \\
3\sqrt{6} & \sqrt{3}\alpha   & 3   
\end{array}
\right), \; \alpha\in\mathbf{R}.
\end{equation}
Let $x_0=[3\sqrt{2},-1,\sqrt{3}]^\top$ and $\hat x_0=[3\sqrt{2},-1,\sqrt{3}]^\top$.
 The feasible set consists of matrices \begin{equation}\label{QR3}
X_{\alpha}=(1-t) \hat x_0 \hat x_0^\top+tx_0x_0^\top, \; t=(\alpha+1)/2\in [0, 1].
\end{equation}
When $\alpha$ lies in $(0, 1)$, 
maximizing the leading eigenvalue of $X_{\alpha}$ yields the exact recovery of $X_0$.

%
%Finally, we like to point out one interesting property.  Minimizing the trace norm will yield   positive semidefinite matrices automatically. 
%  \begin{prop} Suppose that the trace of each matrix $X$ satisfying $\mathcal{A}(X)=b^2$ is invariant. 
%  Then   the minimizer  $X=X_0$ of the trace norm minimization
%  \[
%  \min_{\mathcal{A}(X)=b^2} \|X\|_*
%  \]
%must be  positive semidefinite, $X_0\succeq 0$.
% \end{prop}
% \begin{proof}
% Note that the Hermitian matrix  $X=(X_0+X_0^\top )/2$ satisfies $\mathcal{A}(X)=b^2$ when $X=X_0$ satisfies $\mathcal{A}(X)=b^2$. 
% Thus, we can assume that 
%   $X_0\in \mathbf{R}^{n\times n}$ is a symmetric matrix, and   its  eigenvalues
%   are all real, $\lambda_i\in \mathbf{R}$.
%
%Observe that \[
%\|X_0\|_*=\sum_{i=1}^n |\lambda_i|\ge |\sum_{i=1}^n \lambda_i|=| tr(X_0)|, 
%\]
%where equality holds if and only if $\lambda_i\ge 0$ for all $i$.
% 
% \end{proof}
%

\section{Low rank approaches}

In PhaseLift, 
all  eigenvalues of $X$ msy be  computed in each iteration  to be 
projected on the feasible set, consisting of  positive semidefinite matrices with rank $n$.  The projection obviously  becomes a laborious   task when $n$ is large.
Here we propose to replace  the feasible set with a subset consisting of rank-$r$ matrices,
 where $r$ is much smaller than $n$. 
 
Write the positive semidefinite matrices $X$ in PhaseLift as 
$X=xx^\top$ with $x\in \mathbf{R}^{n,r}$ or $x\in \mathbf{C}^{n,r}$. Then,  the original constraint in PhaseLift becomes
\[ b^2= \mathcal{A}(X)=diag(AXA^\top)=|Ax|^2.\]

\subsection{ADM with $r=1$}
Here we focus on the case $r=1$. In section~\ref{rankR}, we will discuss the case $r>1$.
When $r=1$, we arrive at   the problem,
\[
\textrm{ finding $x\in \mathbf{R}^n$ or $ \mathbf{C}^n$ satisfying } |Ax|= b.
\]
In~\cite{Wen}, the  framework
\begin{equation}\label{Wen}
\min \frac{1}{2}\||z|-b\|^2,\textrm{ subject to } z=Ax
\end{equation}
is proposed to address phase retrieval. 
They introduce
the augmented Lagrangian function
\begin{equation}
\label{WenL}
 L(z,x, \lambda)=\frac{1}{2}\||z|-b\|^2+\lambda\cdot (Ax-z)+\frac{\beta}{2}\|Ax-z\|^2.
\end{equation}
The algorithm consists of  updating $z,x,$ and $\lambda$  as follows.

 \begin{alg} \label{WenAlg}  
  Initialize $x^0$ randomly and $\hat \lambda^0=0$. Then repeat the steps for $k=0,1,2,\ldots$.
\begin{eqnarray*}
 z^{k+1}&=&\frac{u}{|u|} \frac{b+\beta |u|}{1+\beta},  \; u=Ax^k+\beta^{-1}\lambda^k,\\
 x^{k+1} &= & A^\dagger ( z^{k+1}-\beta^{-1} \lambda^{k}),\\
  \lambda^{k+1}&=& \lambda^k+\beta (A x^{k+1}-z^{k+1} ).
\end{eqnarray*}
 \end{alg}

Let us simplify the algorithm.   
Let  $P=AA^\dagger=QQ^\top$. Assume that $A$ has rank $n$.
By eliminating $x^{k}$, the  $\lambda$-iteration becomes  \[
\beta^{-1}\lambda^{k+1}=(I-P)(\beta^{-1} \lambda^k-z^{k+1}).
\]
Thus,   $A^\dagger \lambda^{k+1}=0$. In the end, we have the following algorithm.   

 \begin{alg}\label{WenAlg1} Denote $\hat \lambda^k = \beta^{-1}\lambda^k$. 
  Initialize $x^0$ randomly and $\hat \lambda^0=0$. Compute  $z^0$. Then, repeat the steps for $k=0,1,2,\ldots$,  
 \begin{eqnarray*}
 z^{k+1}&=&\frac{u}{|u|} \frac{b+\beta |u|}{1+\beta},  \; u=Pz^k+\hat \lambda^k,\\
 \hat \lambda^{k+1}&=&(I-P)(\hat \lambda^k-z^{k+1}).
 \end{eqnarray*}
 \end{alg}
 
\begin{rem}[Equivalence under right matrix multiplication]
  Note that the iteration is updated via $P=QQ^\top$, instead of $A$.  The matrix $R$ does not appear in the $z$ and $\lambda$ iterations.  Thus,   the algorithm is `` invariant" with respect to $R$.  That is, for   any invertible matrix $\hat R$, we get the same iterations $\{z^k,\hat \lambda^k\}_{k=1}^\infty$, when  $(A, x^0)$ is replaced by  \[ (Q\hat R, (\hat R)^{-1} Rx^0).\]  In particular,   the iteration with $Q$ yields the same result as the one with $A$ itself. However,  ADM can produce different results when the \textit{left} matrix multiplication on $A$ is considered. See section~\ref{Failure1}.
  \end{rem}

 Suppose that  $z^k$ converges  to  $z^*$ and $\hat \lambda^k$ converges to $\hat\lambda^*$. Then,  $Pz^*=z^*$ and $P\hat \lambda^*=0$.  Hence, 
 $x^*=A^\dagger z^*$,  and $z^*= Pz^*=A x^*$.  Consider  the limit of the $z$-iteration,
 \[
 (1+\beta) z^*=\frac{u}{|u|} (b+\beta |u|)=\frac{u}{|u|} b+\beta (z^*+\hat \lambda^*).
 \]
 Thus, we have the orthogonal projection of $bu/|u|$ onto the range of $A$ and its null space,  
  \[\frac{u}{|u|} b=z^*-\beta\hat \lambda^*,\] 
\begin{equation}\label{B}
\textrm{ thus } \|b\|^2=\| z^*\|^2+\beta^2 \|\hat \lambda^*\|^2.
 \end{equation}
 This result shows  $\|Ax^*\|\le \|b\|$. 
 In particular, when $A=Q$, we have  $\|x^*\|\le \|b\|=\|x_0\|$, i.e., any non-global solution has the smaller norm. 
Besides,   Eq.~(\ref{B}) suggests the usage of smaller $\beta$ to improve the recovery of $x_0$. Empirical experiments show that, starting with $\lambda^0=0$,  a smaller value $\beta$ leads to  a higher chance of exact recovery.

To analyze the convergence,  we write the function $ L(z,x, \lambda)$ as 
\[
 \hat L(z,x, \lambda, s):=\frac{1}{2}\|z-bs\|^2+\lambda\cdot (Ax-z)+\frac{\beta}{2}\|Ax-z\|^2,
\]
where the entries of $s$ satisfies $|s_i|=1$ for $i=1,\ldots, N$ and clearly  the optimal vector $s$ to minimize $\hat L$ is given by $u/|u|$.  When $s$ is fixed, then the following customized proximal point algorithm 
which consists of iterations
\begin{eqnarray*}
 z^{k+1}&=&s \frac{b+\beta |u|}{1+\beta},  \; u=Ax^k+\beta^{-1}\lambda^k,\\
  \lambda^{k+1}&=& \lambda^k+\beta (A x^{k}-z^{k+1} ),\\
   x^{k+1} &= & A^\dagger ( z^{k+1}-\beta^{-1} \lambda^{k+1})
\end{eqnarray*}
can be used to solve the least squares problem
\begin{equation}\label{LSE}
\min \frac{1}{2}\|z-sb\|^2,\textrm{ subject to } z=Ax.
\end{equation}
 Gu et al.~\cite{COAPHE} provide the convergence analysis of the customized proximal point algorithm. More precisely, fixing $s$,  let $(z^*,x^*,\lambda^*)$ be a saddle point of 
$\hat L(z,x,\lambda,s)$ and 
let \[ 
\|v^{k+1}-v^k\|_M^2:=(v^{k+1}-v^k)^\top M (v^{k+1}-v^k)
\textrm{ with }\] 
\[
M:=[\beta^{1/2}A, -\beta^{-1/2}I]^\top [\beta^{1/2}A, -\beta^{-1/2}I], \; v:=
\left(
\begin{array}{c}
x  \\
 \lambda
\end{array}
\right).
\]
In Lemma 4.2, Theorem 4.2 and Remark 7.1~\cite{COAPHE}, the sequence $\{v^k\}$ satisfies
\[
\|v^{k+1}-v^*\|_M^2+\|v^k-v^{k+1}\|_M^2\le \|v^k-v^*\|_M^2,
\]
and then  $\lim_{k\to \infty} \|v^k-v^{k+1}\|_M^2=0$. Any limit point of $[z^k,x^k,\lambda^k]$ is a solution of the problem in Eq.~(\ref{LSE}) with $s$ fixed.
However, 
 the convergence analysis  of the algorithm in Eq.~(\ref{WenAlg}) does not exist  due to the lack of convexity in $z$.
 In fact, when $s$ is updated in each $z$-iteration, this algorithm sometimes fails to converge, which is shown in our simulations; see Section~\ref{Failure1}.

%%%%
\subsection{Recoverability}\label{Recovery}
   We make the following two observations regarding$|Ax_0|=b$. Suppose the unknown signal $x_0$ satisfies $\|x_0\|=1$.
First, the vector $x$ is updated to maximize the inner product $|Ax|\cdot b$ in the ADM.  However, because  the norm constraint $\|x\|=1$ is not enforced explicitly,  a non-global maximizer $x$ generally does not has the unit norm, $\|x\|<\|x_0\|=1$.   In fact, classic phase retrieval algorithms e.g., ER, BIO, HIO\cite{Fienup:82}, do not enforce the constraint directly.   Second,   for those indices $i$ with $a_i\cdot x$ close to zero, the unit  vector $x$ to be recovered should be approximately perpendicular to  these sensing vectors  $a_i$. The candidate set $\{x: |a_i\cdot x|\le b_i\}$  forms a cone, including  unit vectors approximately orthogonal to  $a_i$ corresponding to $b_i$ close to zero. 
In particular, when  $a_1\cdot x_0\neq 0$ and $a_i\cdot x_0=0$ for $i=2,\ldots, N$ with $N>n$, then the  cone is  exactly   the one-dimensional subspace spanned by   the vector $x_0$.

One important issue of non-convex minimization problems is that the initialization can affect the  performance  dramatically. The x-iteration in the ADM tends to produce a vector close to the singular vector corresponding to its least singular value of $A$, in the sense that   $A^\dagger z$ boosts the component  along   the singular vector corresponding to the largest singular value of  $A^\dagger$, i.e.,  the smallest singular value of  $A$. In the following, we will analyze the recovery problem from a viewpoint of singular vectors and derive an error estimate between the unknown signal and the singular vector. 
 
Rearrange the indices such that   $\{b_i\}$ are sorted in an increasing order, \[ 0\le b_1\le b_2\le\ldots\le b_N.\] 
Divide the indices into three groups,
\[
\{1,2,\ldots, N\}=I\cup II\cup III.
\]
We shall use subscripts $I,II,III$  to indicate the indices from these three groups.   The set $ I$ consists of the indices corresponding to  the smallest $N_{I}$  terms among $\{b_i\}$.  The set $  II  $ consists of the indices corresponding to  the largest $N_{II}$  terms among $\{b_i\}$. 
 Denote  the matrix   consisting of  rows $\{a_i\}_{i\in I} $ by $A_I$. Let  $A_I$ and $A_{II}$ consist of  $N_I$ and $N_{II}$ rows, respectively. 
In the following, we  illustrate that  the singular vector $x_{min}$ corresponding to the least singular value   of $A_I$ is a good initialization $x^0$ in the ADM.

Without loss of generality, assume  that $x_0=e_1$ and that all the rows $\{a_i\}_{i=1}^N$ of $A$ are normalized, $\|a_i\|=1$. Observe that the desired vector  satisfies  $|A x_0|\le b$ and $\|x_0\|=1$. Hence, we   look for a \textit{ unit}  vector $x$ in the closed convex set \[
|a_i\cdot x|\le b_i \textrm{  for all $i$}.
\]

Whether phase retrieval  can be solved depends on the structure of $A$. We make the following  assumptions.
\begin{itemize}
\item First, sufficiently many indices $i\in I$ exist, such that  
 \[ \|b_I\|^2:=\sum_{i\in I} b_i^2 \textrm{ is sufficiently small compared to } \|A_I x_1\|^2,\] 
where $x_1$ is a unit vector  orthogonal to $x_0$. (Clearly, the matrix $A_I$ has rank at least $n-1$.)

\item Second, there are at least $n$ indices  in $II$, such that  \[ \textrm{entries } \{b_i\} \textrm{ are \textit{large} for } i\in II , \]  and   the matrix $A_{II}$ has rank $n$.

\end{itemize}

The  assumption  that $\{b_i\}_{i\in I}$ is close to zero  implies that 
 $\{a_i\}_{i\in I}$ are almost orthogonal to $x_0$.
Thus, we instead  
  solve the problem \[ \textrm{ $\min_x \|A_I x\|^2$ with $\|x\|=1$.}\]  
  The minimizer denoted by $x_{min}$ is  the singular vector $x_{min}$ corresponding to the least singular value of $A_I$. Then, 
 \[
 \|A_I x_{min}\|\le \|b_I\|.
 \]
 Let $\{0\le \mu_1\le \ldots\le \mu_n\}_{i=1}^n$ be the singular values of $A_I$ with right singular vectors $v_i$.
 Then $v_1=\pm x_{min}$ and we can write \[
 x_0=\alpha_1 x_{min}+\sqrt{1-\alpha_1^2}\, w, \]
 with some unit vector $w$ orthogonal to $x_{min}$.
Let 
  \[ x_1:=-(1-\alpha_1^2)^{1/2} x_{min}+\alpha w,\] then $x_1$ is a unit vector  orthogonal to $x_0$. 
Note that   \[ \|x_0x_0^\top -x_{min}x_{min}^\top\|^2= \|x_0x_0^\top -x_{min}x_{min}^\top\|^2_F/2
=1-\alpha_1^2.\]
 The following proposition gives a bound for the distance $ \|x_0x_0^\top -x_{min}x_{min}^\top\|$ via the ratio $\|A_I x_0\|/\|A_I x_1\|$.

 \begin{prop} Let $\alpha_1=|x_0\cdot x_{min}|$. 
 Then,  
  \begin{eqnarray}\label{closeness}
 && (2-\alpha_1^2) \|b_I\|^2\ge (1-\alpha_1^2) \|A_Ix_1\|^2
  \end{eqnarray} 
 Therefore, as $\|b_I\|$ is small enough, $1-\alpha_1^2$ must be close to $0$.  
 Note that $ \|b_I\|^2=\|A_Ix_0\|^2$, then \begin{eqnarray}\label{closeness1}
 &&  \|b_I\|^2  \|A_Ix_1\|^{-2}\ge \left(\frac{  1-\alpha_1^2}{2-\alpha_1^2}\right) \ge
   \frac{1}{2}\|x_0x_0^\top -x_{min} x_{min}^\top \|^2.
  \end{eqnarray}

 \end{prop}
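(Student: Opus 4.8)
The plan is to exploit the orthogonal decomposition of $x_0$ along $x_{min}$ together with the variational characterization of $x_{min}$ as the minimizer of $\|A_I x\|$ over the unit sphere. Write $x_0 = \alpha_1 x_{min} + \sqrt{1-\alpha_1^2}\, w$ with $w$ a unit vector orthogonal to $x_{min}$, and correspondingly $x_1 = -\sqrt{1-\alpha_1^2}\, x_{min} + \alpha_1 w$, so that $\{x_0, x_1\}$ and $\{x_{min}, w\}$ are both orthonormal pairs spanning the same plane. The first step is to expand $\|A_I x_0\|^2$ and $\|A_I x_1\|^2$ in terms of the quantities $\|A_I x_{min}\|^2$, $\|A_I w\|^2$, and the cross term $x_{min}^\top A_I^\top A_I w$. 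Since $x_{min}$ is the bottom right singular vector of $A_I$, we do not have that cross term vanish automatically — $w$ need not be a singular vector — so I would instead use the extremal property directly: for \emph{every} unit vector $u$ in the $x_{min}$–$w$ plane, $\|A_I u\|^2 \ge \|A_I x_{min}\|^2 = \mu_1^2$.

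The key inequality comes from applying this to $u = x_1$: $\|A_I x_1\|^2 \ge \mu_1^2$ is too weak, so instead I would bound $\|A_I x_1\|^2$ from above using $\|A_I x_0\|^2$. The trick is to note that $x_1$ is a unit combination of $x_0$ and $x_{min}$ as well: from the two representations one gets $x_1 = c_1 x_0 + c_2 x_{min}$ for suitable scalars with $c_1^2 + c_2^2 + 2 c_1 c_2 \alpha_1 = 1$; in fact the cleanest route is $x_{min} = \alpha_1 x_0 - \sqrt{1-\alpha_1^2}\, x_1$ (reading the orthonormal change of basis backwards), giving $\mu_1^2 = \|A_I x_{min}\|^2 = \|\alpha_1 A_I x_0 - \sqrt{1-\alpha_1^2}\, A_I x_1\|^2$. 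Expanding and using Cauchy–Schwarz on the cross term $\langle A_I x_0, A_I x_1\rangle$, bounded by $\|A_I x_0\|\,\|A_I x_1\| \le \tfrac12(\|A_I x_0\|^2 + \|A_I x_1\|^2)$ (or more carefully, keeping it signed and using $\mu_1^2 \le \|A_I x_1\|^2$ again on the leftover), yields after rearrangement
\[
(2 - \alpha_1^2)\,\|A_I x_0\|^2 \ge (1-\alpha_1^2)\,\|A_I x_1\|^2,
\]
which is $(\ref{closeness})$ once we substitute $\|A_I x_0\|^2 = \|b_I\|^2$ (this last identity holds because $x_0 = e_1$ means $a_i \cdot x_0$ is, up to sign, exactly $b_i$ for $i \in I$; this uses $|A x_0| \le b$ componentwise with equality, as stated). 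The second displayed inequality $(\ref{closeness1})$ then follows by dividing through by $\|A_I x_1\|^2$ (positive since $A_I$ has rank $\ge n-1$ and $x_1 \perp x_0$, so $A_I x_1 \ne 0$ generically — this should be folded into the standing assumptions) and recalling the already-established identity $\|x_0 x_0^\top - x_{min} x_{min}^\top\|^2 = 1 - \alpha_1^2$, together with the elementary bound $\frac{1-\alpha_1^2}{2-\alpha_1^2} \ge \frac12(1-\alpha_1^2)$.

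The main obstacle I anticipate is handling the cross term $\langle A_I x_0, A_I x_1 \rangle$ cleanly: a blunt Cauchy–Schwarz may lose the constant and fail to produce exactly the coefficients $2-\alpha_1^2$ and $1-\alpha_1^2$. The right move is probably to keep the expansion of $\mu_1^2 = \|\alpha_1 A_I x_0 - \sqrt{1-\alpha_1^2}\,A_I x_1\|^2$ exact, then invoke $\mu_1^2 \le \|A_I x_1\|^2$ a \emph{second} time to eliminate $\mu_1^2$ from the left, and only then bound the surviving cross term — or, better, to choose signs in the basis change so the cross term has a favorable sign and can simply be dropped. A secondary point to get right is the degenerate case $\alpha_1 = 1$ (then $x_{min} = \pm x_0$, both sides of $(\ref{closeness})$ collapse correctly) and $\alpha_1$ near values where $\|A_I x_1\|$ could be small; but under the first standing assumption $\|A_I x_1\|^2$ is bounded below, so the division is legitimate.
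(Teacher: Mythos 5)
The decisive step is missing. You explicitly set aside the possibility that the cross term $\langle A_I x_{min}, A_I w\rangle$ vanishes (``$w$ need not be a singular vector''), but it \emph{does} vanish, and this is exactly the fact the argument turns on: since $x_{min}$ minimizes $\|A_I x\|$ over the unit sphere, it is an eigenvector of $A_I^\top A_I$ with eigenvalue $\mu_1^2$, hence
\[
\langle A_I x_{min}, A_I w\rangle = w^\top A_I^\top A_I\, x_{min} = \mu_1^2\, w^\top x_{min} = 0 .
\]
With this, both expansions are cross-term free,
\[
\|A_I x_0\|^2=\alpha_1^2\mu_1^2+(1-\alpha_1^2)\|A_I w\|^2,\qquad
\|A_I x_1\|^2=(1-\alpha_1^2)\mu_1^2+\alpha_1^2\|A_I w\|^2,
\]
and a direct computation gives
\[
(2-\alpha_1^2)\|A_I x_0\|^2-(1-\alpha_1^2)\|A_I x_1\|^2=\mu_1^2+2(1-\alpha_1^2)^2\bigl(\|A_I w\|^2-\mu_1^2\bigr)\ge 0,
\]
using only $\|A_I w\|^2\ge\mu_1^2$ (minimality of $x_{min}$ again). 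This is essentially the paper's proof.

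Your proposed substitute does not close. Writing $x_{min}=\alpha_1 x_0-\sqrt{1-\alpha_1^2}\,x_1$ and expanding $\mu_1^2$ leaves the signed cross term $c=\langle A_I x_0, A_I x_1\rangle$. The inequality $\mu_1^2\le\|A_I x_1\|^2$ rearranges to $\alpha_1^2\|A_I x_0\|^2\le\alpha_1^2\|A_I x_1\|^2+2\alpha_1\sqrt{1-\alpha_1^2}\,c$, which bounds $\|A_I x_0\|^2$ from \emph{above} --- the wrong direction. The variant using $\mu_1^2\le\|A_I x_0\|^2$ gives $(1-\alpha_1^2)\|A_Ix_1\|^2\le(1-\alpha_1^2)\|A_Ix_0\|^2+2\alpha_1\sqrt{1-\alpha_1^2}\,c$, so you would need $2\alpha_1\sqrt{1-\alpha_1^2}\,c\le\|A_I x_0\|^2$; Cauchy--Schwarz gives $c\le\|A_Ix_0\|\,\|A_Ix_1\|$, and in the regime $\mu_1=0$, $\alpha_1^2>1/2$ this bound equals $\alpha_1\sqrt{1-\alpha_1^2}\,\|A_Iw\|^2$ while $\|A_Ix_0\|^2=(1-\alpha_1^2)\|A_Iw\|^2$, so the required estimate fails even though the target inequality is true. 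You correctly identify this cross term as the main obstacle but leave it unresolved; the resolution is precisely the eigenvector property you discarded. The remaining steps --- $\|b_I\|^2=\|A_I x_0\|^2$, dividing by $\|A_I x_1\|^2>0$, the identity $\|x_0x_0^\top-x_{min}x_{min}^\top\|^2=1-\alpha_1^2$, and $\tfrac{1-\alpha_1^2}{2-\alpha_1^2}\ge\tfrac12(1-\alpha_1^2)$ --- are fine.
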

 \begin{proof} Note that
  \[
\begin{cases} \|A_Ix_0\|^2=\alpha_1^2\|A_Ix_{min}\|^2+(1-\alpha_1^2)\|A_Iw\|^2, \\
 \|A_Ix_1\|^2=(1-\alpha_1^2)\|A_Ix_{min}\|^2+\alpha_1^2\|A_Iw\|^2. \; 
  \end{cases}
  \] 
  
  Because  $x_{min}$ is the singular vector of the the least eigenvalue,
 \begin{eqnarray*}
&&(1-2\alpha_1^2) \|A_Ix_0\|^2-(1-\alpha_1^2)\|A_Ix_1\|^2\\
 &=&\|A_Ix_{min}\|^2+2(1-\alpha_1^2)^2 \left(
 \|A_Iw\|^2-\|A_Ix_{min}\|^2\right)\ge 0.
 \end{eqnarray*}
 \end{proof}

 Denote the sign vector of $A_{II} x_{min}$ by $u_{II}$,  \[
 u_{II}=\frac{ A_{II} x_{min}}{|A_{II}x_{min}|}. \]
The closeness $\|A_{II}(x_{min}-x_0)\|$ yields that  
 $A_{II}x_{min}$ should be close to $A_{II} x_0$. In particular, when the magnitude of entries $b_{II}$ are large enough, both vectors have the same sign  \[ \frac{ A_{II} x_{min}}{|A_{II}x_{min}|}=\frac{ A_{II} x_{0}}{|A_{II}x_{0}|}\] and then 
 \[ A_{II} x_{0}=u_{II}b_{II}.
 \]
 Once the sign vector is retrieved, the vector $x_0$ can be computed via
\[
x_0=A_{II}^{-1} (u_{II}b_{II}).
\]
 
\subsection{Real Gaussian matrices }
As an example of computing $\|b_I\|$ and $\|A_Ix_1\|$, 
let  $A\in \mathbf{R}^{N\times n}$  be  a  random matrix consisting of i.i.d. normal $(0,1)$ entries.  In the following, we would  illustrate that when $N_I/N$ is small enough,  $x_{min}$ is a good initialization for  $x_0$  and thus we can recover the missing sign
 vector $(Ax_0)/b$.

 Let $x_0=e_1$. Then $x:=a_i\cdot x_0$ follows the distribution 
  the normal $(0,1)$ distribution.
   Let $a>0$ be a function of $N_I/N$ and satisfy \begin{equation}\label{Feq}
 F(a):=\int_{-a}^a (2\pi)^{-1/2}\exp(-x^2/2) dx=N_I/N.\end{equation}  
 Then  the leading terms of Taylor series  of   Eq.~(\ref{Feq}) yields
 \begin{equation}\label{aF}
 (2/\pi)^{1/2}(a-a^3/6)\le  N_I/N\le (2/\pi)^{1/2} a.
 \end{equation}
%  When $0\le a\le c_0< \sqrt{6}$ for some constant $c_0$, we have \begin{equation}\label{LB}a-a^3/6\ge c_1 a\end{equation} for some constant $c_1>0$ depending  on $c_0$.
% \footnote{
 %Thanks to  the convexity of $f(a):=-a+a^3/6$, we have the secant lines lying above the graph, $f(tc)\le (1-t)f(0)+t f(c)$ for $t\in [0,1]$. Besides, the slope of the secant is negative if $0<a< \sqrt{6}$.
%} 

Define the ``truncated" second moment
 \[
 \sigma_a^2:= \int_{-a}^a x^2 (2\pi)^{-1/2}\exp(-x^2/2)dx.
 \]
Taking the Taylor expansion of  $\sigma_a^2$ in terms of $a$ yields 
the following  result.
 \begin{prop}\label{sigmaB} 
\[  \sigma_a^2\le (2/\pi)^{1/2} a^3/3.\]
Additionally, when  $N_I/N$ tends to zero,  $\sigma^2_a$ is  approximately  $ (N_I/N)^3(\pi/6)$, where  Eq.~(\ref{aF}) is used.
   \end{prop}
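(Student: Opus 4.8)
The plan is to handle the two assertions separately, since both follow from Taylor-expanding the Gaussian density inside the integral. For the upper bound, I would use the crude estimate $\exp(-x^2/2)\le 1$, valid for all real $x$, which immediately gives
\[
\sigma_a^2=\int_{-a}^a x^2(2\pi)^{-1/2}\exp(-x^2/2)\,dx\le (2\pi)^{-1/2}\int_{-a}^a x^2\,dx=(2\pi)^{-1/2}\frac{2a^3}{3}=(2/\pi)^{1/2}\frac{a^3}{3}.
\]
This uses nothing beyond monotonicity of the integral and the elementary primitive of $x^2$, so it holds for every $a>0$, not merely for small $a$.

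For the asymptotic statement, I would first expand $\sigma_a^2$ itself in powers of $a$: substituting $\exp(-x^2/2)=1-x^2/2+O(x^4)$ and integrating term by term over $[-a,a]$ gives $\sigma_a^2=(2/\pi)^{1/2}(a^3/3)+O(a^5)$ as $a\to 0$, the first correction being the negative term $-(2\pi)^{-1/2}a^5/5$. Next I would invert the relation $F(a)=N_I/N$ from Eq.~(\ref{Feq}). The two-sided bound in Eq.~(\ref{aF}) already shows $a\to 0$ as $N_I/N\to 0$, and expanding $F(a)=(2/\pi)^{1/2}(a-a^3/6+O(a^5))$ and solving yields $a=(\pi/2)^{1/2}(N_I/N)+O((N_I/N)^3)$. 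Substituting this into the expansion of $\sigma_a^2$ and using the identity $(2/\pi)^{1/2}(\pi/2)^{3/2}=\pi/2$ gives $\sigma_a^2=\frac{\pi}{6}(N_I/N)^3+O((N_I/N)^5)$, which is the claimed approximation.

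There is no genuine obstacle here; the only points requiring care are bookkeeping of the error terms — checking that the $O(a^5)$ remainder in $\sigma_a^2$ and the $O((N_I/N)^3)$ remainder in the inversion of $F$ combine to give an error of order $(N_I/N)^5$ relative to the leading term — and, should one want a fully rigorous inequality rather than an asymptotic statement, replacing the $O(\cdot)$'s by explicit bounds using the alternating, eventually-decreasing nature of the Taylor series of $e^{-x^2/2}$ on a bounded interval. Since the proposition only claims ``approximately,'' the asymptotic version suffices.
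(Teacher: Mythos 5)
Your proof is correct and follows essentially the same route as the paper, which simply states that the result follows from the Taylor expansion of $\sigma_a^2$ in $a$ combined with Eq.~(\ref{aF}); you have merely filled in the details the paper omits. Your use of the elementary bound $\exp(-x^2/2)\le 1$ for the first inequality is a clean way to make the leading-order claim a genuine inequality valid for all $a>0$, and the substitution $a\approx(\pi/2)^{1/2}(N_I/N)$ yielding the constant $\pi/6$ matches the intended computation.
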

  
When  $N_I/N$ tends to zero, the following proposition shows that  $N_I^{-1/2}\|b_I\|$ is approximately $ (\pi/6)^{1/2}N_I/N$,
 see Fig.~\ref{bnorm} for one numerical simulation. 
  
\begin{prop}\label{BnormA}
Suppose $N_I/N<c$ for some constant $c$. Then 
with high probability  \footnote{ An event occurs ``with high probability" if for any $\alpha\ge 1$, 
the probability is at least $1-c_\alpha N^{-\alpha}$, where $c_\alpha$ depends only on $\alpha$. }  \[
N_I^{-1/2}\|b_I\|\le c_6(\pi/6)^{1/2}N_I/N,
\]
for some constant $ c_6$. 
 
 \end{prop}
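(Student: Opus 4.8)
The plan is to reduce the bound to a statement about the \emph{small} order statistics of i.i.d.\ folded normal variables and then combine a quantile estimate with a Bernstein-type concentration inequality. With $x_0=e_1$ the measurements are $b_i=|a_i\cdot e_1|=|a_i(1)|$, i.i.d.\ copies of $|X|$ with $X\sim$ normal$(0,1)$, and $I$ indexes the $N_I$ smallest of $b_1,\dots,b_N$. Fix a small parameter $\delta>0$ (to be let tend to $0$ as $N_I/N\to0$) and define the cutoff $a_+$ by $F(a_+)=(1+\delta)N_I/N$, with $F$ as in Eq.~(\ref{Feq}). Since $a_+\to0$ as $N_I/N\to0$, inverting the two-sided bound Eq.~(\ref{aF}) gives $a_+\le(\pi/2)^{1/2}(1+\delta)(N_I/N)(1+o(1))$.

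First I would pin down the relevant order statistic. The count $N_+:=\#\{i:b_i\le a_+\}$ is Binomial$(N,(1+\delta)N_I/N)$ with mean $(1+\delta)N_I$, so a Chernoff bound shows $N_+\ge N_I$ with probability at least $1-e^{-c\delta^2N_I}$. On that event the index set $I$ is contained in $\{i:b_i\le a_+\}$, because the $N_I$ smallest values all lie below the threshold $a_+$, hence
\[
\|b_I\|^2=\sum_{i\in I}b_i^2\ \le\ S:=\sum_{i=1}^N b_i^2\,\mathbf{1}\{b_i\le a_+\}.
\]
The variable $S$ is a sum of i.i.d.\ terms bounded by $a_+^2$, with mean $N\sigma_{a_+}^2$ and per-term variance at most $a_+^2\sigma_{a_+}^2$, so Bernstein's inequality yields $S\le(1+\delta)N\sigma_{a_+}^2$ with probability at least $1-e^{-c\delta^2N\sigma_{a_+}^2/a_+^2}$; since $\sigma_{a_+}^2/a_+^2\asymp a_+\asymp N_I/N$, this failure probability is again of order $e^{-c\delta^2N_I}$.

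It then remains to convert the deterministic consequence $\|b_I\|^2\le(1+\delta)N\sigma_{a_+}^2$ into the stated estimate. Applying Proposition~\ref{sigmaB} at $a_+$, i.e.\ $\sigma_{a_+}^2\le(2/\pi)^{1/2}a_+^3/3$, together with the bound on $a_+$ above and the identity $(2/\pi)^{1/2}(\pi/2)^{3/2}=\pi/2$, gives
\[
N\sigma_{a_+}^2\ \le\ \frac{\pi}{6}(1+\delta)^3\,N\,(N_I/N)^3\,(1+o(1))\ =\ \frac{\pi}{6}(1+\delta)^3\,N_I\,(N_I/N)^2\,(1+o(1)).
\]
Hence $N_I^{-1}\|b_I\|^2\le\frac{\pi}{6}(1+\delta)^4(N_I/N)^2(1+o(1))$, so $N_I^{-1/2}\|b_I\|\le(1+\delta)^2(1+o(1))(\pi/6)^{1/2}(N_I/N)$; using $N_I/N<c$ we choose $\delta$ small and absorb the residual factors into a single constant $c_6$, while the two failure events each have probability $\le c_\alpha N^{-\alpha}$ once $N_I$ is large enough.

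I expect the main obstacle to be precisely that last proviso: the concentration exponents above scale like $\delta^2N_I$ rather than like a power of $N$, because the truncated second moment $\sigma_{a_+}^2$ is only of order $(N_I/N)^3$ while each summand is of order $(N_I/N)^2$. So ``with high probability'' in the sense of the footnote effectively forces $N_I$ to be at least of order $\log N$ (and prevents $\delta$ from shrinking too quickly); quantifying exactly how small $c$ must be for the stated constant $c_6$ to work, and how the two thresholds interact, is the delicate part. A secondary difficulty is purely bookkeeping: one must propagate the $(1+\delta)$ and $(1+o(1))$ factors carefully through the inversion of $F$ so that the leading constant comes out as $(\pi/6)^{1/2}$ rather than an unspecified absolute constant --- the cheaper bound $\|b_I\|^2\le N_Ib_{(N_I)}^2$ with a quantile bound on $b_{(N_I)}$ would only deliver $N_I^{-1/2}\|b_I\|\le C(\pi/2)^{1/2}(N_I/N)$ for an absolute constant $C$, and matching the sharp constant genuinely requires the truncated-moment estimate of Proposition~\ref{sigmaB}.
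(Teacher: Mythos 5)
Your argument is correct and follows essentially the same route as the paper's: replace the random index set $I$ by a deterministic truncation near the $N_I/N$ quantile of the folded normal, concentrate the truncated sum of squares around $N\sigma_a^2$, and invoke Proposition~\ref{sigmaB} to extract the constant $(\pi/6)^{1/2}$. The only differences are technical refinements --- you enlarge the cutoff to $a_+$ with $F(a_+)=(1+\delta)N_I/N$ so that $I$ is contained in the truncation set outright and apply Bernstein, whereas the paper truncates at $a$ itself, controls the empirical quantile via a Hoeffding/DKW bound, and adds a correction term of size $\epsilon_1 N_I(a+\epsilon)^2$ for the indices of $I$ that escape the set $\hat I$ --- and your closing caveat that the concentration exponent scales like $\delta^2 N_I$ (so that ``high probability'' in the footnote's sense implicitly requires $N_I$ to grow at least logarithmically in $N$) applies equally to the paper's own proof, which passes over the point silently.
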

 \begin{proof} 

 Because $x_0=e_1$,
 $\|b_I\|$ is  the norm of the first column of $A_I$.
Let $\hat \mu>0$ be the sample $N_I$ quantile~\cite{ChenH}, $\hat \mu=|a_{N_I}\cdot x_0|$.
 We have the following probability inequality for $|\hat \mu-a|$:
 For every $\epsilon>0$, 
 \[
 \mathbb{P}(|\hat \mu-a|>\epsilon)\le 2\exp(-2N\delta_\epsilon^2)
\textrm{ for all $N$,}\] where \[\delta_\epsilon:=\min \{F(a+\epsilon)-N_I/N,N_I/N-F(a-\epsilon)  \}.\]
That is,  with high probability we have \begin{equation}\label{aa}
a-\epsilon\le |a_{N_I}\cdot x_0|\le a+\epsilon.
\end{equation}
The proof is based on  the Hoeffding inequality in large deviation;
see 
Theorem 7, p. 10, \cite{ChenH}.
Let $\hat N_I$ be  the cardinality  of the set 
$\hat I:=\{i:    |a_i\cdot x_0|\le  a\}$. With high probability, 
we have\footnote{ For  every $\epsilon_1>0$,
let  $\beta:=F^{-1} ( N_I(1-\epsilon_1)/N)$ and  $\epsilon:= a-\beta=F(N_I/N)-\beta>0$. Then   
\[ \beta-\epsilon\le |a_{ N_I/(1-\epsilon_1)}\cdot x_0|\le \beta+\epsilon \textrm{ with high probability.}\]
}   \[ \hat N_I \ge N_I (1-\epsilon_1)\textrm{ for any $\epsilon_1>0$}.\]

%Using Eq.~(\ref{LB}), let 
%\begin{eqnarray*} \hat \sigma_a^2&:=&\int_{-\hat a}^{\hat a} x^2 dF(x)\le \hat a^3 (2/\pi)^{1/2}/3\\
%&\le& a^3(1+\epsilon)^3 (2/\pi)^{1/2}/3=((1+\epsilon)c_1^{-1}N_I/N)^3 (\pi/6) .\end{eqnarray*}
 Let $\{Z_i\}_{i=1}^N$ be independent  bounded random variables, \[ 
 Z_i :=
 (N/N_I)^3\left((a_i\cdot x_0)^2- \sigma_a^2(N/ N_I)\right) \textrm{ if } |a_i\cdot x_0|\le a, \textrm{ zero, otherwise.}
  \]
\[ \textrm{    Then } \mathbb{E}[Z_i]=  (N/N_I)^3\left(\sigma_a^2-\sigma_a^2 (N/ N_I) \mathbb{P}(|a_i\cdot x_0|\le a)\right)=0,\] i.e., 
   $Z_i$ is centered. The Hoeffding inequality 
   (e.g., Prop. 5.10 \cite{Roman})  
  yields for some positive constants $c_4,c_5$, \footnote{  The sub-gaussian norm $\|Z_i\|_{\psi_2}$is bounded by a constant ( depending on $N_I/N$), independent of $N$: \[\sup_{p\ge 1} p^{-1/2} (\mathbb{E}[Z_i^p])^{1/p}\le\sup_{p\ge 1} p^{-1/2}  (N/N_I)^2 a^2(N_I/N)^{1/p-1}\le (N/N_I)^2 a^2(N_I/N)^{-1}.\]}
  \[ \mathbb{P}(N^{-1}|\sum_{i=1}^N Z_i|\le  t)\ge c_4 \exp(- c_5 N t^2).\]
 That is, 
  with  probability at least $1- c_4 \exp(- c_5 N t^2)$,
 \[
\left | N^{-1} \|\hat b_{ I}\|^2- \sigma_a^2\frac{\hat N_I}{ N_I}\right|\ge t(N_I/ N)^3.
 \]
 
Thanks to  $\hat N_{ I}\ge N_I (1-\epsilon_1)$ with high probability and Eq.~ (\ref{aF}),~(\ref{aa}),  \[ 
\frac{\|b_I\|^2}{N_I}\le \frac{ \|\hat b_I\|^2}{\hat N_I}+
\frac{\epsilon_1 N_I (a+\epsilon)^2 }{\hat N_{ I}}\le  \frac{ \|\hat b_I\|^2}{\hat N_I}+
\epsilon_1 \frac{ (a+\epsilon)^2}{1-\epsilon_1},\]
   and
  \[ 
 \frac{\|\hat b_{ I}\|^2}{\hat N_{ I}}\le  \sigma_a^2 \frac{N}{ N_I}+t\frac{N_I}{\hat N_{ I}} \left(\frac{N_I}{N}\right)^2 \]\[\le 
\left( \sigma_a^2(N_I/N)^{-3}+t/(1-\epsilon_1) \right)
 ( \frac{N_I}{N})^2.
 \]
Together with Prop. \ref{sigmaB},  with high probability 
\[N_I^{-1/2}\|b_I\|\le c_6 (\pi/6)^{1/2} (N_I/N) \textrm{
for some constant $c_6$.} \]
  \end{proof}

To  bound the norm $\|x_0x_0^\top-x_{min}x_{min}^\top\|$ in 
 Eq.~(\ref{closeness1}), we need to compute $\|A_Ix_1\|^2$. 
  Denote by  $A'_{I}$  the sub-matrix of $A_I$ with the first column deleted, i.e., 
\[
A=[ *_{N_1\times 1}, A'_I].
\]
Denote by $\{\delta_{i}\}_{i=2}^{n}$ the singular values of the sub-matrix  $A_I'$.
Since $x_1$ is orthogonal to $x_0$, then we have lower bounds for  $\|A_I x_1\|=\|A_I' x_1\|$, i.e., $\delta_2$. 
 Observe that the first column of $A$ is independent of  the remaining columns of $A$.  
Note that   entries of $A'_{I}$ are i.i.d. Normal(0,1).
According to the random matrix theory of Wishart matrices,  with high probability, the singular values $\{\delta_i\}$ of the sub-matrix are bounded between $\sqrt{N_{I}}-\sqrt{n}$ and $\sqrt{N_{I}}+\sqrt{n}$. 
 More precisely, 
 \[
 \mathbb{P}(\sqrt{N_I}-\sqrt{n}-t\le \delta_2\le \delta_n\le \sqrt{N_I}+\sqrt{n}+t)\ge 1-2e^{-t^2/2},\; t\ge 0,
 \]
 see Eq.~(2.3)~\cite{rudelson2010non}.
  Together with 
$ N_I^{-1/2}\|b_I\|\le  c_6(\pi/6)^{1/2}N_I/N$ in Prop.~\ref{BnormA} and  Eq.~(\ref{closeness1}),
we have
 \[
\|x_0x_0^\top -x_{min}x_{min}^\top\|/\sqrt{2}\le  \|b_I\|/(\sqrt{N_I}-\sqrt{n}-t)\]\[\le   c_6\sqrt{\pi/6}(N_I/N) (1-\sqrt{(n-1)/N_I}-\sqrt{t/N_I})^{-1}.
 \]
Let  $ c_7:= c_6  (1-\sqrt{(n-1)/N_I}-\sqrt{t/N_I})^{-1} $, then we have the following result.
\begin{prop} Suppose that $\sqrt{N_I}>\sqrt{n}+t$ for $t>0$. 
Then with high probability
\[
\frac{1}{\sqrt{2}}\|x_{min}x_{min}^\top-x_0 x_0^\top \|\le (N_I/N) c_7\sqrt{\pi/6}. 
\]
for some constant $ c_7>0$, independent of $N$.
\end{prop}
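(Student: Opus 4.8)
The plan is to assemble the proposition from three pieces already in hand: the geometric inequality Eq.~(\ref{closeness1}), the upper bound on $\|b_I\|$ from Prop.~\ref{BnormA}, and the lower bound on the least singular value of the column‑deleted submatrix $A_I'$ supplied by the Wishart estimate Eq.~(2.3) of \cite{rudelson2010non}. No new estimate is needed; the proposition is a corollary obtained by chaining these together.

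First I would rewrite Eq.~(\ref{closeness1}), using $\|b_I\|^2=\|A_Ix_0\|^2$, in the form
\[
\frac{1}{\sqrt 2}\,\|x_0x_0^\top - x_{min}x_{min}^\top\| \le \frac{\|b_I\|}{\|A_I x_1\|}.
\]
Since $x_1\perp x_0=e_1$, the vector $x_1$ lies in the span of the last $n-1$ coordinates, so $\|A_Ix_1\|=\|A_I'x_1\|\ge \delta_2$, the least singular value of $A_I'$. Because the first column of $A$ is independent of the rest and the entries of $A_I'$ are i.i.d. Normal$(0,1)$, the cited Wishart bound gives $\delta_2\ge \sqrt{N_I}-\sqrt n -t$ on an event of probability at least $1-2e^{-t^2/2}$; the hypothesis $\sqrt{N_I}>\sqrt n+t$ makes this lower bound strictly positive.

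Next I would substitute $\|b_I\|=\sqrt{N_I}\cdot N_I^{-1/2}\|b_I\|$ and use $N_I^{-1/2}\|b_I\|\le c_6(\pi/6)^{1/2}N_I/N$ from Prop.~\ref{BnormA}. Dividing by $\sqrt{N_I}-\sqrt n-t$ and factoring $\sqrt{N_I}$ out of numerator and denominator turns the quotient into $c_6(\pi/6)^{1/2}(N_I/N)(1-\sqrt{n/N_I}-t/\sqrt{N_I})^{-1}$. Taking $c_7:=c_6\,(1-\sqrt{(n-1)/N_I}-\sqrt{t/N_I})^{-1}$, which is finite by the standing hypothesis $\sqrt{N_I}>\sqrt n+t$ and bounded uniformly in the regime where $N_I,n$ scale proportionally to $N$ (as implicit in $N_I/N<c$), yields the claimed inequality. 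A union bound over the two high‑probability events — the one from Prop.~\ref{BnormA} and the Wishart event — gives the conclusion with high probability.

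The only delicate point is the bookkeeping: tracking how the constants and the failure probabilities of the two events combine, and checking that the denominator $1-\sqrt{n/N_I}-t/\sqrt{N_I}$ is bounded away from zero so that $c_7$ is genuinely a constant independent of $N$. Under the assumptions carried through this subsection this is automatic, so there is no substantive obstacle beyond this assembly.
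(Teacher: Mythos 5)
Your proposal is correct and is essentially the paper's own argument: the paper likewise chains Eq.~(\ref{closeness1}) with $\|A_Ix_1\|=\|A_I'x_1\|\ge\delta_2\ge\sqrt{N_I}-\sqrt{n}-t$ from the Wishart estimate and the bound $N_I^{-1/2}\|b_I\|\le c_6(\pi/6)^{1/2}N_I/N$ of Prop.~\ref{BnormA}, then absorbs the factor $(1-\sqrt{(n-1)/N_I}-\sqrt{t/N_I})^{-1}$ into $c_7$. Your explicit union bound over the two events and the remark that the denominator must stay bounded away from zero are just slightly more careful statements of what the paper leaves implicit.
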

 \begin{figure}[htbp]
\begin{center}
\includegraphics[width=0.32\textwidth]{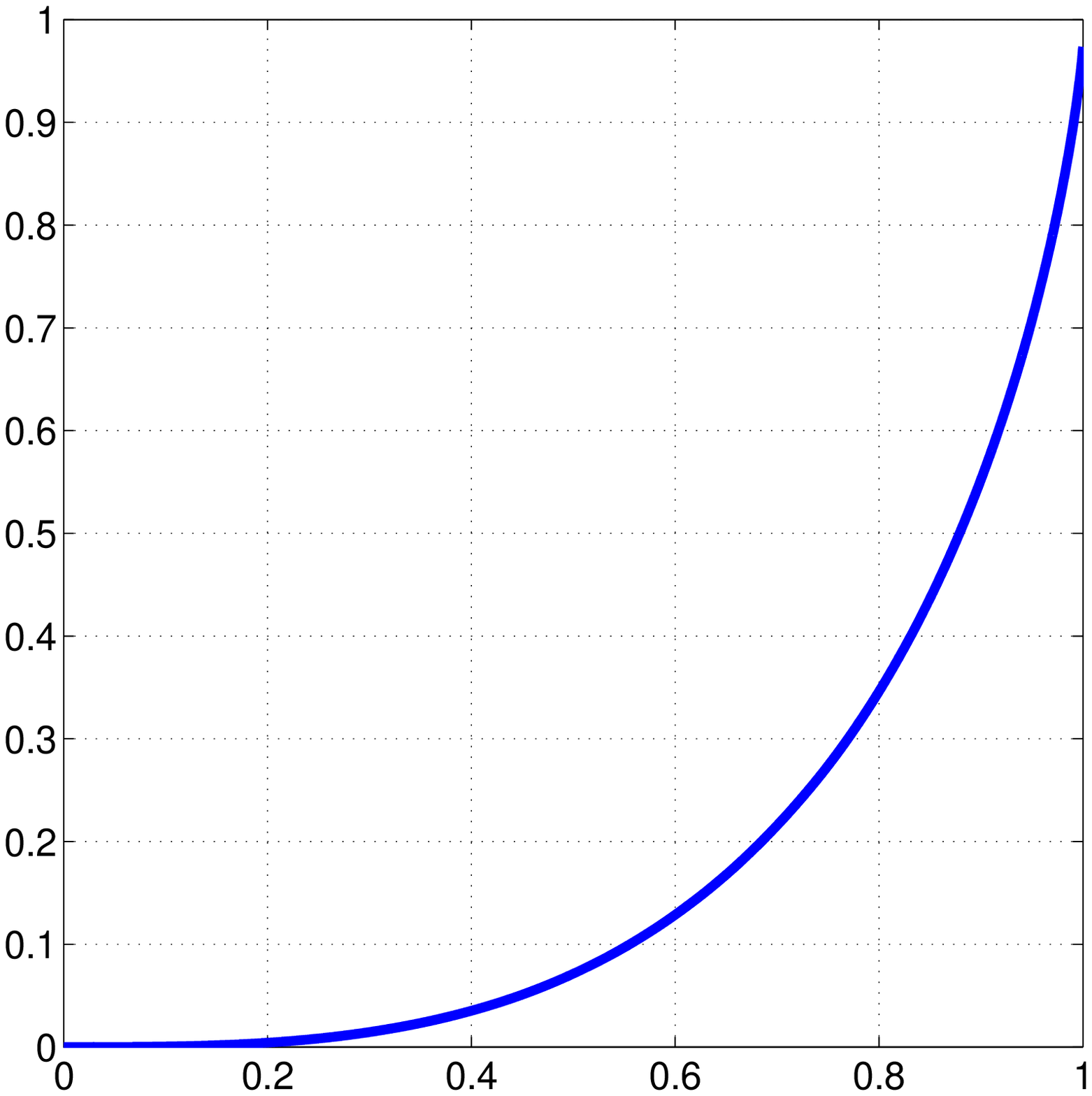}
\includegraphics[width=0.32\textwidth]{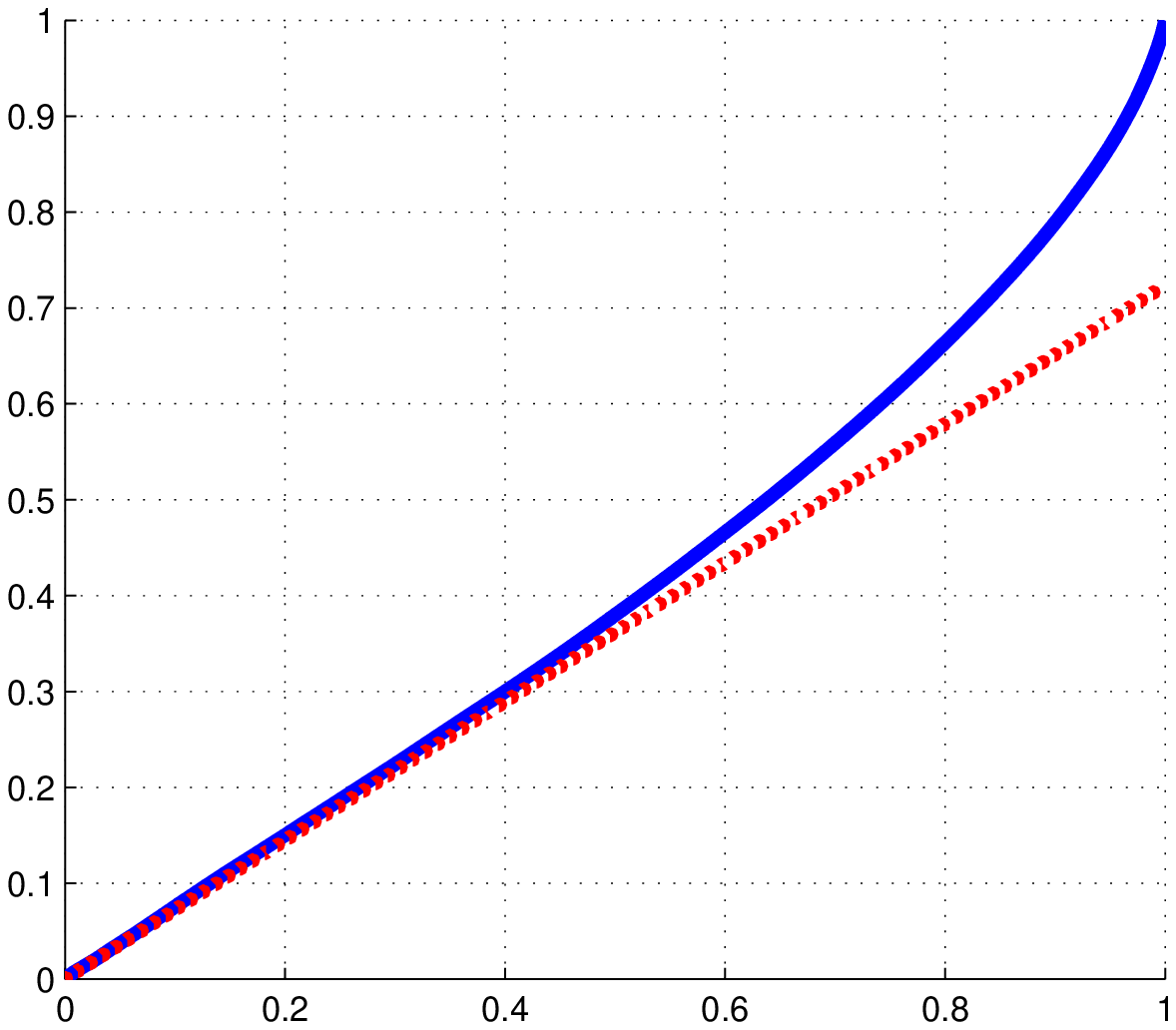}
\caption{{  Figures show $N^{-1}\|b_I\|^2$ and $N_I^{-1/2}\|b_I\|$ vs. $N_I/N$. The red dotted line is $\sqrt{\pi/6}(N_I/N)$ vs. $N_I/N$.}}
\label{bnorm}
\end{center}
\end{figure}
%
% Next, we show one numerical simulation.  
% Choose $n=100$, $N_I=2n=200$, $N_{II}=n=100$, and $N=6n=600$. Theoretically,  with high probability \[ \textrm{ $1-\sqrt{1/2}\le N_I^{-1/2}\mu_i\le 1+\sqrt{1/2}$ for $i\ge 2$.}  \] In one numerical simulation, we have  $N_I^{-1/2}\|b_I\|=0.256$ and \[
%N_I^{-1/2} \mu_1=0.177,\; N_I^{-1/2}\mu_2=0.304,\ldots, N_I^{-1/2}\mu_n=1.729.
% \]
% The closeness is  $\|x_0-x_{min}\|/\sqrt{2}=0.186$ and  the missing sign  vector $u_{II}$ is    exactly recovered,
% \[ \frac{ A_{II} x_{min}}{|A_{II}x_{min}|}=\frac{ A_{II} x_{0}}{|A_{II}x_{0}|}=u_{II}.\]  

\begin{rem}\label{Simulation}
The following simulation illustrates
that  $x_{min}$ is a good initialization  $x^0$. 
Use the  alternating minimization of $x$ and $s$ to solve  the problem
\[
\min_s \min_{x} \| Ax-sb\|^2, \; |s|=1.
\]
Choose $A$ to be a Gaussian random matrix from $\mathbf{R}^{240\times 60}$ and $N_I=90$. Rescale both $x_0,x^*$ to unit vectors, where $x^*$ is the vector $x$ at the final iteration.  In Fig.~\ref{xmin}, the
 reconstruction error  is measured in terms of 
 \[ \|x_0x_0^\top  -x^*{x^*}^\top\|.\] 
 The  figures in Fig.~\ref{xmin} show the results of $100$ trials using two different  initializations. Obviously the singular vector is a good initialization.
 
 \begin{figure}[htbp]
\begin{center}
\includegraphics[width=0.32\textwidth]{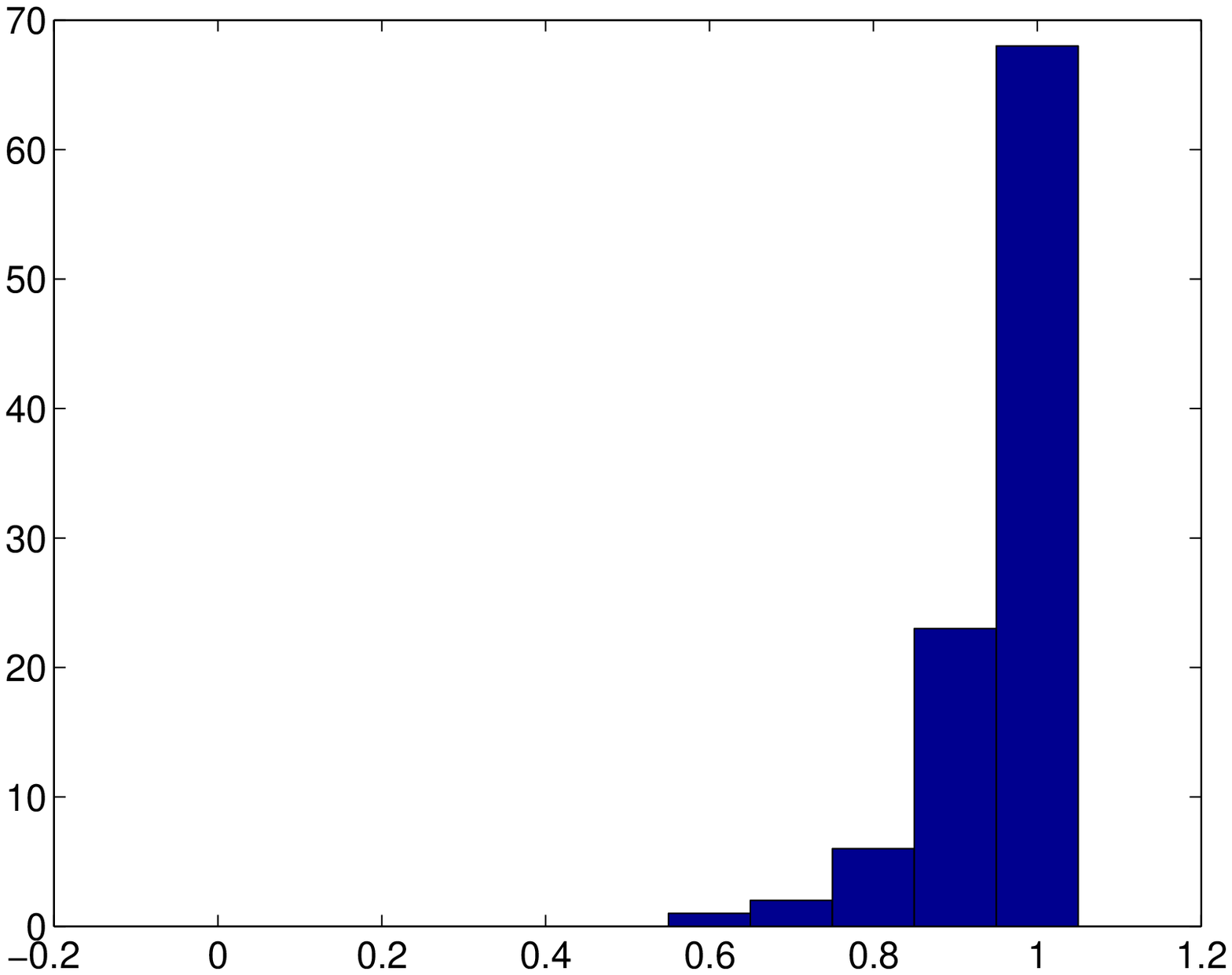}
\includegraphics[width=0.32\textwidth]{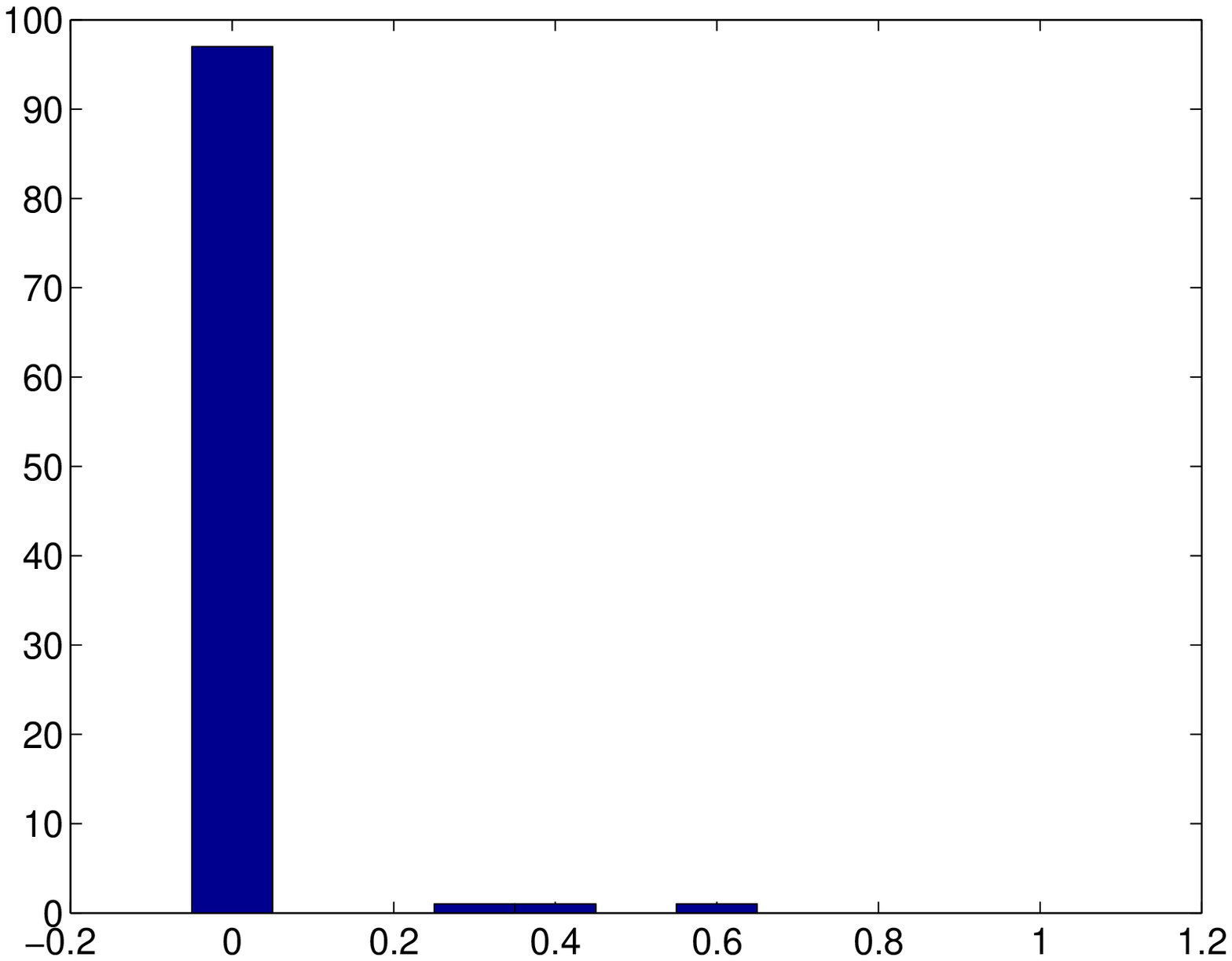}
\caption{Figures show   the histogram of the reconstruction error  via  the random initialization (left) and via the $x_{min}$ initialization (right).
}
\label{xmin}
\end{center}
\end{figure}

 \end{rem}

\subsection{ADM with rank-$r$ }\label{rankR}

Under some circumstances, the singular vector corresponding to the least singular value becomes a poor  initialization for $x_0$, for instance, in the presence  of noise. Empirically, we find that  the ADM with rank $r$ can alleviate the situation;
%  if  $x_0$ is close to some unit vector lying in the $r$-dimensional  subspace corresponding to the least $r$ singular values,
  see~\ref{NoiseEx}.

We propose the  rank-$r$ method:  
\begin{equation}
\min_{x\in \mathbf{R}^{n,r}} \frac{1}{2}\||Ax|-b\|^2,
\end{equation}
where  $|z|$ refers to the vector whose $i$-th entry is the vector norm of the $i$-th row of the matrix $z$, i.e., $(\sum_{j=1}^r z_{i,j}^2)^{1/2}$.
Note that 
when $r=n$, the  set $\{xx^\top : |Ax|=b\}$ is convex. 
In practical applications, we consider $r<n$ to save the computational load. 
Hence,  instead of  vectors $x,z$ in Eq.~(\ref{Wen}), we consider matrices
 $x\in \mathbf{R}^{n,r}$ and $z\in \mathbf{R}^{p,r}$  with $Ax=z$ in 
 the non-convex minimization problem,
 \begin{equation}\label{rankR}
\min_{z,x} L(z,x,\lambda),\end{equation}
 \begin{equation}
  L(z,x,\lambda):= \{ \frac{1}{2}\||z|-b\|^2+ tr(\lambda^\top (Ax-z)) +\frac{\beta}{2}\|Ax-z\|_F^2\}.
\end{equation}
Similar to Alg.~\ref{WenAlg}, we can adopt  the ADM consisting of $z,x,\lambda$-iterations to solve the non-convex minimization problem.
With  $\lambda$ fixed, the optimal matrices $z,x$ have the following explicit expression.
\begin{prop}
Suppose $(z,x)$ is a minimizer in Eq.~(\ref{rankR});  then, $(zV,xV)$ is also a minimizer for any orthogonal matrix  $V\in \mathbf{R}^{n,n}$. Moreover,
\begin{itemize} \item for each $z$ fixed, $x=A^\dagger (z-\beta^{-1} \lambda)$ is the optimal matrix. 
\item 
   Fixing $x$, write $u=Ax+\beta^{-1}\lambda$,  then the optimality of $z$ is  \[
z=\frac{u}{|u|}\frac{b+\beta |u|}{1+\beta}.
\]
\end{itemize}
\end{prop}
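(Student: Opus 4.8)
The plan is to treat the three assertions separately; each is short once the right reduction is made. For the rotational invariance, observe that on the feasible set $\{(z,x):Ax=z\}$ the two coupling terms of $L$ vanish, so there $L(z,x,\lambda)=\tfrac12\||z|-b\|^2$, independent of $\lambda$. If $(z,x)$ is feasible then so is $(zV,xV)$, since $A(xV)=(Ax)V=zV$; moreover $|zV|=|z|$ because right multiplication by an orthogonal $V$ preserves the Euclidean norm of each row, hence $\tfrac12\||zV|-b\|^2=\tfrac12\||z|-b\|^2$. Thus $(zV,xV)$ is feasible with the same objective value and is therefore a minimizer whenever $(z,x)$ is. (If instead one wants invariance of the full Lagrangian with $\lambda$ held fixed, the same computation gives $L(zV,xV,\lambda V)=L(z,x,\lambda)$, using in addition $\|(Ax-z)V\|_F=\|Ax-z\|_F$ and $\mathrm{tr}\bigl((\lambda V)^\top(Ax-z)V\bigr)=\mathrm{tr}\bigl(\lambda^\top(Ax-z)\bigr)$.)

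For the $x$-update, fix $z$ and $\lambda$ and collect the $x$-dependent part of $L$, namely $\mathrm{tr}(\lambda^\top Ax)+\tfrac\beta2\|Ax-z\|_F^2$. Completing the square in $Ax$ rewrites this as $\tfrac\beta2\|Ax-(z-\beta^{-1}\lambda)\|_F^2$ up to an $x$-independent constant, so minimizing $L$ over $x$ is the matrix least-squares problem $\min_x\|Ax-(z-\beta^{-1}\lambda)\|_F^2$. Since $A$ has rank $n$, hence full column rank, the minimizer is unique and equals $A^\dagger(z-\beta^{-1}\lambda)=(A^\top A)^{-1}A^\top(z-\beta^{-1}\lambda)$, which is the stated formula.

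For the $z$-update, fix $x$ and $\lambda$ and complete the square in $z$ in the last two terms of $L$: they equal $\tfrac\beta2\|z-u\|_F^2$ plus a $z$-independent constant, where $u=Ax+\beta^{-1}\lambda$. So one must solve $\min_z\tfrac12\||z|-b\|^2+\tfrac\beta2\|z-u\|_F^2$, which decouples into $N$ row problems $\min_{w\in\mathbf{R}^{r}}\tfrac12(|w|-b_i)^2+\tfrac\beta2|w-u_i|^2$. The elementary point is that the optimal row $w$ is radial: for fixed $|w|=\rho$ one has $|w-u_i|^2=\rho^2-2\langle w,u_i\rangle+|u_i|^2$, minimized over $\{|w|=\rho\}$ by $w=\rho\,u_i/|u_i|$ when $u_i\neq0$, so it remains to minimize $g(\rho)=\tfrac12(\rho-b_i)^2+\tfrac\beta2(\rho-|u_i|)^2$; the stationarity $g'(\rho)=0$ gives $(1+\beta)\rho=b_i+\beta|u_i|$. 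Reassembling the rows yields $z=\dfrac{u}{|u|}\cdot\dfrac{b+\beta|u|}{1+\beta}$.

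There is no serious obstacle; the only points needing care are: (i) being explicit about which problem ``minimizer in~(\ref{rankR})'' refers to when asserting the $V$-invariance — I would phrase it for the constrained problem $\min_{Ax=z}\tfrac12\||z|-b\|^2$, where $\lambda$ plays no role and the argument is cleanest; (ii) justifying the radiality of the optimal row in the $z$-step and stating the convention at $u_i=0$ (there the objective is $\tfrac12(|w|-b_i)^2+\tfrac\beta2|w|^2$, minimized at magnitude $b_i/(1+\beta)$ with arbitrary direction), rather than merely asserting the formula; and (iii) invoking the full column rank of $A$, which is exactly what turns the $x$-update into a single formula rather than a solution set.
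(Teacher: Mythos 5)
Your proof is correct and, for the $z$-update (the only part the paper proves in detail), follows essentially the same route: separate into rows, argue the optimal row is radial along $u_i$, and solve the resulting one-dimensional quadratic to get $\alpha_i=(1+\beta)^{-1}(b_i+\beta\|u_i\|)$. You additionally supply the $x$-update least-squares argument and the $V$-invariance computation, which the paper dismisses as obvious or leaves implicit, and your care about the $u_i=0$ convention and the sign of the radial coordinate is a genuine (if minor) improvement in rigor.
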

\begin{proof} We only prove the $z$-part. The $x$-part is obvious.  Because $L$ is separable in each row $z_i$ of $z$, then the optimization of $z_i$ can be solved via 
\[
\min_{z_i} \frac{1}{2} (\|z_i\|^2-2b_i  \|z_i\|+b_i^2)+\frac{\beta}{2} (\|u_i\|^2-2u_i\cdot z_i+\|z_i\|^2).
\]
The optimality of $z_i$ occurs if and only if $z_i$ parallels $u_i/\|u_i\|$. Let $z_i =\alpha_i u_i/\|u_i\| $ with $\alpha_i$ to be determined. Thus,
\[
\min_{\alpha_i} \frac{1}{2} (\alpha_i^2-2b_i  |\alpha_i |+b_i^2)+\frac{\beta}{2} (\|u_i\|^2-2\alpha_i \|u_i\|+\alpha_i^2).
\]
Then, $\alpha_i\ge 0$ and \[
\alpha_i-b_i+\beta (\alpha_i-\|u_i\|)=0,
\]
i.e., $\alpha_i=(1+\beta)^{-1} (b_i+\beta \|u_i\|)$ completes the proof.
\end{proof}

Empirical experimentation shows that the above ADM can usually yield  an optimal solution $x\in \mathbf{R}^{n,r}$ 
with rank not equal to one,  which does satisfy  $|Ax|=b$.    To recover the  rank-one matrix $x_0x_0^\top $,  we take the following  steps. First,
we  standardize   $A$ to be a matrix consisting of orthogonal columns via QR or SVD factorizations, 
such that $I_{n\times n}$ lies in the range of $\mathcal{A}$. Indeed, 
\[
\sum_{i=1}^N a_ia_i^\top =A^\top A=I_{n\times n}.
\]
When $y\in \mathbf{R}^{n,r}$, we have $b^2\cdot e= tr(yy^\top)=\|y\|^2_F$.
Hence, the norm $\|y\|^2_F=\sum_{i=1}^n \sigma_i(y)^2$ remains constant for all the feasible solutions $\{y: |Qy|=b\}$, where $\sigma_i(y)$ refers to the singular values of $y$. 
Second, consider the objective function to retrieve the matrix $y$ with  the maximal  leading singular value,
\begin{equation}\label{sigma1}
\min_y \left(\frac{1}{2}\| Qy-z\|_F^2-\gamma \sigma_1(y)\right), 
\end{equation}
where $\gamma>0$ is some parameter to balance the fidelity $|Qy|=|z|=b$ and the maximization of the leading singular value $\sigma_1(y)$. \footnote{ In experiments, we choose $\gamma=0.01$.}
Since the leading singular value of $y$ is maximized,  there is no guarantee that we can always obtain the global optimal solution. 

\begin{prop}

Write $Q^\top z$ in the SVD factorization, \[ Q^\top z=U_z D_zV_z^\top.\] 
Then the optimal matrix $y$  in Eq.~(\ref{sigma1}) is $y=U_zD_yV_z^\top$ in the SVD factorization, where
 $D_y=D_z+\gamma e_1 e_1^\top$. 
\end{prop}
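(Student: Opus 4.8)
The plan is to eliminate $Q$ from the objective using the identity $Q^\top Q = I_{n\times n}$, and then to reduce the matrix minimization in Eq.~(\ref{sigma1}) to a finite-dimensional problem in the singular values of $y$ by means of von Neumann's (classical) trace inequality, which makes the global optimum transparent even though the objective is a difference of convex functions.

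First I would expand, using the orthonormality of the columns of $Q$,
\[
\frac12\|Qy-z\|_F^2 = \frac12\|y\|_F^2 - tr(y^\top Q^\top z) + \frac12\|z\|_F^2 .
\]
Discarding the constant $\frac12\|z\|_F^2$ and setting $C := Q^\top z = U_zD_zV_z^\top$, the problem becomes $\min_{y\in\mathbf{R}^{n,r}} g(y)$ with $g(y) := \frac12\|y\|_F^2 - tr(y^\top C) - \gamma\,\sigma_1(y)$.

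Next I would bound $g$ from below in terms of the singular values $s_i := \sigma_i(y)$, sorted decreasingly, and the fixed numbers $d_i := \sigma_i(C) = (D_z)_{ii}$. Von Neumann's trace inequality gives $tr(y^\top C) \le \sum_i s_i d_i$, while $\|y\|_F^2 = \sum_i s_i^2$ and $\sigma_1(y) = s_1$, so
\[
g(y) \ \ge\ h(s) := \frac12\sum_i s_i^2 - \sum_i s_i d_i - \gamma s_1 , \qquad s_1 \ge s_2 \ge \cdots \ge 0 .
\]
The right-hand side is separable, so I would minimize it coordinatewise over $s_i \ge 0$, first ignoring the ordering constraint: the $i=1$ term $\frac12 s_1^2 - (d_1+\gamma)s_1$ is minimized at $s_1 = d_1+\gamma$, and for $i\ge 2$ the term $\frac12 s_i^2 - d_i s_i$ is minimized at $s_i = d_i$. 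Since $\gamma>0$ and $d_1\ge d_2\ge\cdots\ge 0$, the vector $(d_1+\gamma,\,d_2,\,d_3,\dots)$ is already nonincreasing, hence it is the constrained minimizer of $h$ as well.

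Finally I would observe that this lower bound is attained: for $y^\star := U_zD_yV_z^\top$ with $D_y = D_z + \gamma e_1e_1^\top$, the singular values of $y^\star$ are exactly $(d_1+\gamma,\,d_2,\dots)$, and since $y^\star$ and $C$ share the simultaneous singular value decomposition $U_z(\cdot)V_z^\top$ one gets $tr((y^\star)^\top C) = \sum_i s_i d_i$, i.e.\ equality in von Neumann's inequality. Therefore $g(y^\star) = h(s^\star) = \min h \le \inf g$, so $y^\star$ is a global minimizer, which is the asserted formula. The only delicate point is the equality case of von Neumann's inequality — one has to exhibit a matrix with the optimal singular values that simultaneously achieves the trace bound, and this is precisely $y^\star$; the rest is a routine one-dimensional computation. (If $C$ has repeated or vanishing singular values the SVD is not unique, but every minimizer still has the stated form for a suitable choice of $U_z,V_z$.)
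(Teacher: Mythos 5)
Your proof is correct, and it takes a genuinely different (and in fact tighter) route than the paper's. The paper argues via rotational invariance of the Frobenius norm: it rewrites $\frac12\|Qy-z\|_F^2-\gamma\sigma_1(y)$ as $\frac12\|U_z^\top QU_yD_yV_y^\top V_z-D_z\|_F^2-\gamma D_y(1,1)$ and asserts that the first term is minimized when this product equals $D_z+\alpha e_1e_1^\top$, leaving one scalar minimization in $\alpha$; this glosses over both the component of $z$ orthogonal to the range of $Q$ and the justification for why the optimal $y$ must share singular vectors with $Q^\top z$. You instead reduce, via $Q^\top Q=I$, to $\min_y \frac12\|y\|_F^2-tr(y^\top C)-\gamma\sigma_1(y)$ with $C=Q^\top z$ plus a constant, lower-bound the objective by a separable function of the singular values $s_i$ using von Neumann's trace inequality, solve the resulting one-dimensional problems (checking that the ordering constraint $s_1\ge s_2\ge\cdots$ is automatically satisfied because $\gamma>0$), and then verify attainment by exhibiting $y^\star=U_zD_yV_z^\top$, which achieves equality in von Neumann's inequality by sharing the SVD bases of $C$. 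This buys you a rigorous proof of \emph{global} optimality for what is a difference of convex functions, precisely where the paper's "rotational invariance" step is informal; the paper's argument is shorter but implicitly assumes the alignment of singular vectors that your trace-inequality step actually proves. The only point left open in both treatments is uniqueness of the minimizer when $D_z$ has repeated leading singular values, which you correctly flag as a caveat rather than claim.
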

\begin{proof}
Observe that 
\[
\frac{1}{2}\| Qy-z\|_F^2-\gamma \sigma_1(y)=\frac{1}{2}\|U_z^\top Q U_yD_yV_y^\top V_z- D_z\|_F^2-\gamma D_y(1,1),
\]
where $D_y(1,1)$ refers to the (1,1) entry of the diagonal matrix $D_y$.
Due to  the rotational invariance of  the Frobenius norm, then the first term achieves its minimum when 
\begin{equation}\label{DD} U_z^\top Q U_yD_yV_y^\top V_z=D_z+\alpha e_1 e_1^\top \end{equation} and   $\alpha$ is the minimizer of
\[
\min_\alpha \left(\alpha^2/2-\gamma (D_z(1,1)+\alpha)\right), \; i.e., \alpha=\gamma.
\]
 Also,  Eq.~(\ref{DD}) yields $U_z^\top Q U_y=I$ and $V_y^\top V_z=I$,
which completes the proof.
\end{proof}
\begin{rem}
Suppose  that 
$|Ax|=b$ for some $x\in \mathbf{R}^{n,r}$. Then, the minimizer of \[
\min_{x} \left(\frac{1}{2}\| |Ax|-b\|^2-\gamma \sigma_1(x )\right)
\]
is $(1+\gamma) x_0$. Indeed, consider $x=\alpha x_0$. Then, 
\[
\alpha=arg\min_\alpha \left(\frac{1}{2} (\alpha-1)^2 \|b\|^2-\gamma \alpha\right)=1+\gamma.
\]
\end{rem}

In Eq.~(\ref{rankR}),  replacing $A$ with $Q$ and replacing the term 
$\frac{\beta}{2}\|Ax-z\|_F^2$ with  \[
\beta\left(\frac{1}{2}\| Qy-z\|_F^2-\gamma \sigma_1(y)\right), 
\]
then we adopt the ADM to 
 retrieve a rank-one solution.

\begin{alg}
\begin{enumerate} Initialize a random matrix $y\in \mathbf{R}^{n,r}$ and $\lambda^0=0_{N,r}\in \mathbf{R}^{N,r}$. 
Repeat the following steps, $k=1,2,\ldots $.  Then let  the  solution $x^*$ be the first column of $U_z$, i.e., the singular vector corresponding to the maximal singular value.  
\item  $z$-iteration:
\[
u=Qy^{k}+\lambda^k\beta^{-1},\; 
z^{k+1}=\frac{u}{|u|}\frac{b+\beta |u|}{1+\beta},
\]
\item $\lambda$-iteration:
\[
\lambda^{k+1}=\lambda^k+\beta (Qy^k-z^{k+1}),
\]
\item $y$-iteration:
\[
U_zD_z V_z=Q^\top (z^{k+1}-\lambda^{k+1}\beta^{-1}),\; 
y^{k+1}=U_z (D_z+\gamma e_1).
\]

\end{enumerate}

\end{alg}
\subsection{Standardized frames with equal norm}
In the simulations (section~\ref{Failure1}), we will show the importance of the unit norm condition $\|a_i\|=1$ for $i=1,\ldots, N$ in the ADM approach.  
When the QR factorization  is used to generate an equivalent standardized matrix consisting of rows $\{a_i\}_{i=1}^N$,
 the sensing vectors $\{a_i\}$ do not have equal norm  in general.
 
 The following theorem  states  that we can  standardize  $A$ to obtain  an orthogonal  matrix  $Q$ whose rows have equal norm.   
The proof is given in the appendix.

\begin{thm}\label{Standard}
Given a matrix $A\in \mathbf{R}^{N\times n}$ satisfying the rank* condition and $N> n$, we can find a unique   diagonal matrix $D$ with $D_{i,i}>0$,
such that  \[
D^{-1/2}A=QB,
\]
 and $Q$ is one \textit{standardized} matrix, which is 
 one projection matrix  with  $Q^\top Q=I_{n\times n}$,
 $(QQ^\top)_{i,i}=(n/N)$ for all $i$, where $B$ is   some $n\times n $ nonsingular  matrix.
  \end{thm}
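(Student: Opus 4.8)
The plan is to recast the statement as a smooth convex minimization over the diagonal scalings. Write $a_i^\top$ for the $i$-th row of $A$; for a positive diagonal $D=\mathrm{diag}(d_1,\dots,d_N)$ set $G(D):=A^\top D^{-1}A=\sum_i d_i^{-1}a_ia_i^\top\succ0$ and let $P(D):=D^{-1/2}A\,G(D)^{-1}A^\top D^{-1/2}$ be the orthogonal projector onto $\mathrm{Range}(D^{-1/2}A)$. Because $\mathrm{tr}\,P(D)=n$, the target condition $(QQ^\top)_{i,i}=n/N$ is equivalent to asking that the diagonal of $P(D)$ be constant, i.e.\ $P(D)_{i,i}=n/N$ for every $i$. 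Once such a $D$ is found, putting $Q:=D^{-1/2}A\,G(D)^{-1/2}$ and $B:=G(D)^{1/2}$ yields $D^{-1/2}A=QB$ with $Q^\top Q=I_n$, $QQ^\top=P(D)$, and $B$ symmetric positive definite; since rescaling $D$ by a positive constant leaves $Q$ unchanged, we normalize by $\prod_i d_i=1$.

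To produce $D$, I would substitute $d_i=\exp(r_i)$ and minimize
\[
g(r):=\log\det\big(\textstyle\sum_i \exp(-r_i)\,a_ia_i^\top\big)
\]
over the hyperplane $\mathcal H:=\{r\in\mathbf R^{N}:\ e\cdot r=0\}$. By the Cauchy--Binet expansion, $\det\big(\sum_i c_i a_ia_i^\top\big)=\sum_{|S|=n}\big(\prod_{i\in S}c_i\big)\det(A_S)^2$ with $A_S$ the $n\times n$ submatrix on rows $S$, and \emph{every} $\det(A_S)\neq0$ by the rank* condition; hence $g(r)=\log\sum_{|S|=n}\det(A_S)^2\,\exp(-\sum_{i\in S}r_i)$. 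Differentiating $g$ (using $\partial_iG=-\exp(-r_i)a_ia_i^\top$) and writing $b_i:=\exp(-r_i/2)\,G(D)^{-1/2}a_i$, so that $\sum_i b_ib_i^\top=I_n$ and $\|b_i\|^2=P(D)_{i,i}$, one obtains
\[
\partial_i g=-\,\|b_i\|^2=-\,P(D)_{i,i},\qquad v^\top\big(\nabla^2 g\big)v=\tfrac12\sum_{i,j}(v_i-v_j)^2\,(b_i^\top b_j)^2\ \ge\ 0,
\]
so $g$ is convex. It is also coercive on $\mathcal H$: for fixed $r\in\mathcal H$ and any direction $h\in\mathcal H\setminus\{0\}$, the dominant term of the Cauchy--Binet sum is the one with $S$ minimizing $\sum_{i\in S}h_i$, which gives $g(r+th)=-\,t\,m(h)+O(1)$ as $t\to+\infty$, where $m(h)$ is the sum of the $n$ smallest entries of $h$; since $N>n$, the mean of the $n$ smallest entries is strictly below the overall mean $0$ unless all entries coincide (hence $h=0$), so $m(h)<0$ and $g(r+th)\to+\infty$. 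A convex function that tends to $+\infty$ along every ray of $\mathcal H$ has compact sublevel sets there and thus attains its minimum at some $r^\star$.

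At $r^\star$ the gradient of $g$ is orthogonal to the tangent space $\{v:e\cdot v=0\}$, so $\nabla g(r^\star)$ is a multiple of $e$; thus $P(D^\star)_{i,i}=-\partial_i g(r^\star)$ is independent of $i$, and summing over $i$ with $\sum_i P(D^\star)_{i,i}=\mathrm{tr}\,P(D^\star)=n$ forces $P(D^\star)_{i,i}=n/N$ for all $i$, as required, with $D^\star=\mathrm{diag}(\exp(r^\star_1),\dots,\exp(r^\star_N))$ and $\prod_i d^\star_i=1$. For uniqueness I would upgrade convexity to \emph{strict} convexity on $\mathcal H$: from the Hessian formula, $v^\top(\nabla^2 g)v=0$ with $e\cdot v=0$ forces $v_i=v_j$ whenever $b_i^\top b_j\neq0$, so $v$ is constant on the connected components of the graph with edges $\{i,j\}$ for which $a_i^\top G(D)^{-1}a_j\neq0$. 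If that graph split as $S_1\sqcup S_2$, then $\{a_i\}_{i\in S_1}$ and $\{a_i\}_{i\in S_2}$ would span $G(D)^{-1}$-orthogonal subspaces whose direct sum is $\mathbf R^n$; since any at most $n$ rows of $A$ are linearly independent (rank*), each such subspace has dimension $|S_j|$, whence $N=|S_1|+|S_2|=n$, contradicting $N>n$. So the graph is connected, $v=0$, $\nabla^2 g$ is positive definite on the tangent space, the minimizer $r^\star$ is unique, and therefore so is $D^\star$.

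The crux is the two analytic facts about $g$: the closed-form Hessian identity $v^\top(\nabla^2 g)v=\tfrac12\sum_{i,j}(v_i-v_j)^2(b_i^\top b_j)^2$, which delivers convexity at once and, combined with the connectivity argument, strict convexity; and the coercivity estimate on $\mathcal H$. The rank* hypothesis enters exactly twice — to make every Cauchy--Binet coefficient $\det(A_S)^2$ strictly positive (needed for coercivity) and, through linear independence of any $\le n$ rows, to force connectivity (needed for uniqueness) — while $N>n$ is what makes $m(h)$ strictly negative and rules out the disconnection. Everything else (the matrix-derivative computations and the assembly of $Q$ and $B$) is routine.
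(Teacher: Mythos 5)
Your proof is correct, but it takes a genuinely different route from the paper. The paper establishes existence by applying Brouwer's fixed-point theorem to the map that alternates row normalization with QR factorization on a convex compact set of diagonal scalings, and establishes uniqueness by showing that the iteration $D^{k+1}=(N/n)\,diag(A(A^\top (D^k)^{-1}A)^{-1}A^\top)$ has nonincreasing trace (via Jensen's inequality applied to $x\mapsto x^{-1}$) and, using the rank* condition to exclude the block decomposition (\ref{Block}), converges to a scalar multiple of a fixed scaling from any starting point. You instead package both existence and uniqueness into a single variational principle: minimizing $\log\det\bigl(\sum_i e^{-r_i}a_ia_i^\top\bigr)$ over $\{e\cdot r=0\}$, with the gradient identity $\partial_i g=-P(D)_{i,i}$ turning the first-order condition into exactly the constant-diagonal requirement, coercivity coming from the Cauchy--Binet expansion (where rank* guarantees every coefficient $\det(A_S)^2$ is positive), and uniqueness from strict convexity via the Hessian identity $v^\top(\nabla^2 g)v=\tfrac12\sum_{i,j}(v_i-v_j)^2(b_i^\top b_j)^2$ together with a connectivity argument that again invokes rank*. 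This is essentially the Forster/Barthe characterization of radial isotropic position; I have checked the gradient and Hessian computations, the coercivity estimate (the sum of the $n$ smallest entries of a nonzero mean-zero vector is strictly negative when $N>n$), and the connectivity step, and they are all sound --- the only spot worth tightening is the sentence ``each such subspace has dimension $|S_j|$,'' which should separately dispose of the case $|S_j|\ge n$ (there the other block's vectors would have to vanish, which rank* forbids). Your approach buys a cleaner uniqueness proof and, importantly, makes explicit something the theorem statement glosses over: $D$ is only determined up to a positive scalar, since $D\mapsto cD$ leaves $Q$ unchanged, so a normalization such as your $\prod_i d_i=1$ (or the paper's implicit $\|D^{-1/2}A\|_F^2=N$) is needed for the word ``unique'' to be literally correct. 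What the paper's approach buys in exchange is a concrete fixed-point iteration that doubles as an algorithm for computing $D$.
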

  Here the diagonal value $n/N$ is  
the average of the norm $\|Q\|_F^2= tr(Q^\top Q)= tr(QQ^\top )=n$.  
Also, 
\[
N=\|D^{1/2} A\|_F^2=\|QB\|_F^2=\|B\|_F^2.
\] 
With the uniqueness of $D$, $Q$ is also determined uniquely up to the right multiplication of an orthogonal matrix. Indeed, $B$ is uniquely determined up to the left multiplication of an orthogonal matrix:  
\[
A^\top D^{-1/2}D^{-1/2} A=B^\top Q^\top QB=B^\top B.
\]

Recall that  $A$ satisfies the rank* condition if any   square $n$-by-$n$  sub-matrix  of $A$ is full rank.
When a matrix $A$ satisfies the rank* condition  then there exists
 no orthogonal matrix $V\in\mathbf{R}^{n\times n}$, such that  \begin{equation}\label{Block}AV=C=
\left(
\begin{array}{cc}
C_{1,1}  & 0   \\
 0 & C_{2,2}     
\end{array}
\right),
\end{equation}
where the $0$s refer to zero sub-matrices with size $(N-N_1)\times n_1$ and size $N_1\times (n-n_1)$ and $C_{1,1}$ is an $N_1\times n_1$ matrix. \footnote{ Otherwise, it is easy to see that one of the following  submatrices must be   rank deficient: (1) the top submatrix with entries $\{C_{i,j}: i,j=1,\ldots, n\}$ or (2)the bottom submatrix with entries $\{C_{i,j}: i=N-n+1,\ldots, N,\;  j=1,\ldots, n\}$.}
Furthermore, the condition ensures that 
 the norm of each row must be positive. It is easy to see that, with probability one,  Gaussian random matrices  satisfy the rank* condition.

\section{Experiments}
\subsection{ADM failure experiments}\label{Failure1}
Due to the nature of nonconvex minimization, the algorithm can fail to converge, which is indeed observed in the following two simulations. 

First,
 let us denote the input data by $(A,b)$ with $b_i\neq 0$ and the unknown signal  by $x_0$. 
Mathematically, solving  problem (i) \[ |Ax_0|=b  \] is equivalent to solving   problem (ii)\[
b_i^{-1}|a_i\cdot x_0|=1.
\]
However, solving  these two problems via  the ADM~\cite{Wen} can   yield   different results.  
 
 Let $A$ be a  real Gaussian  random matrix,   $A\in \mathbf{R}^{N\times n}$. Let $b=|Ax_0|$.
    Rescale the system  by $b^{-1}$, i.e.,   the input data becomes $(b^{-1}A, 1_{N\times 1})$, thus equal measurement values.
 Figure~\ref{Failure} shows  
the error  $\| |AA^\dagger z|-b\|$ at each iteration. Here we use the random initialization for $x^0$. 
\begin{figure}[htbp]
\begin{center}
\includegraphics[width=0.32\textwidth]{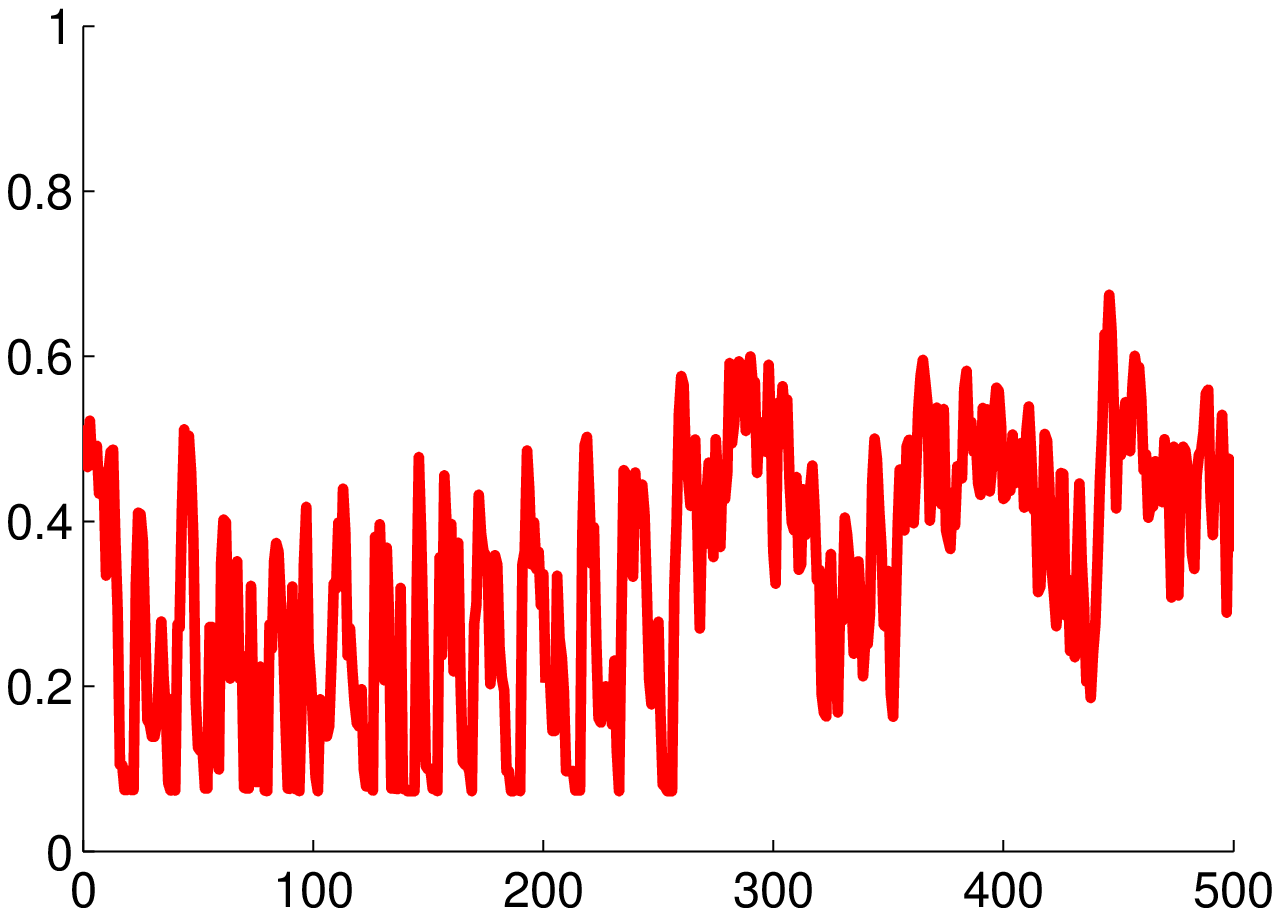}
\includegraphics[width=0.32\textwidth]{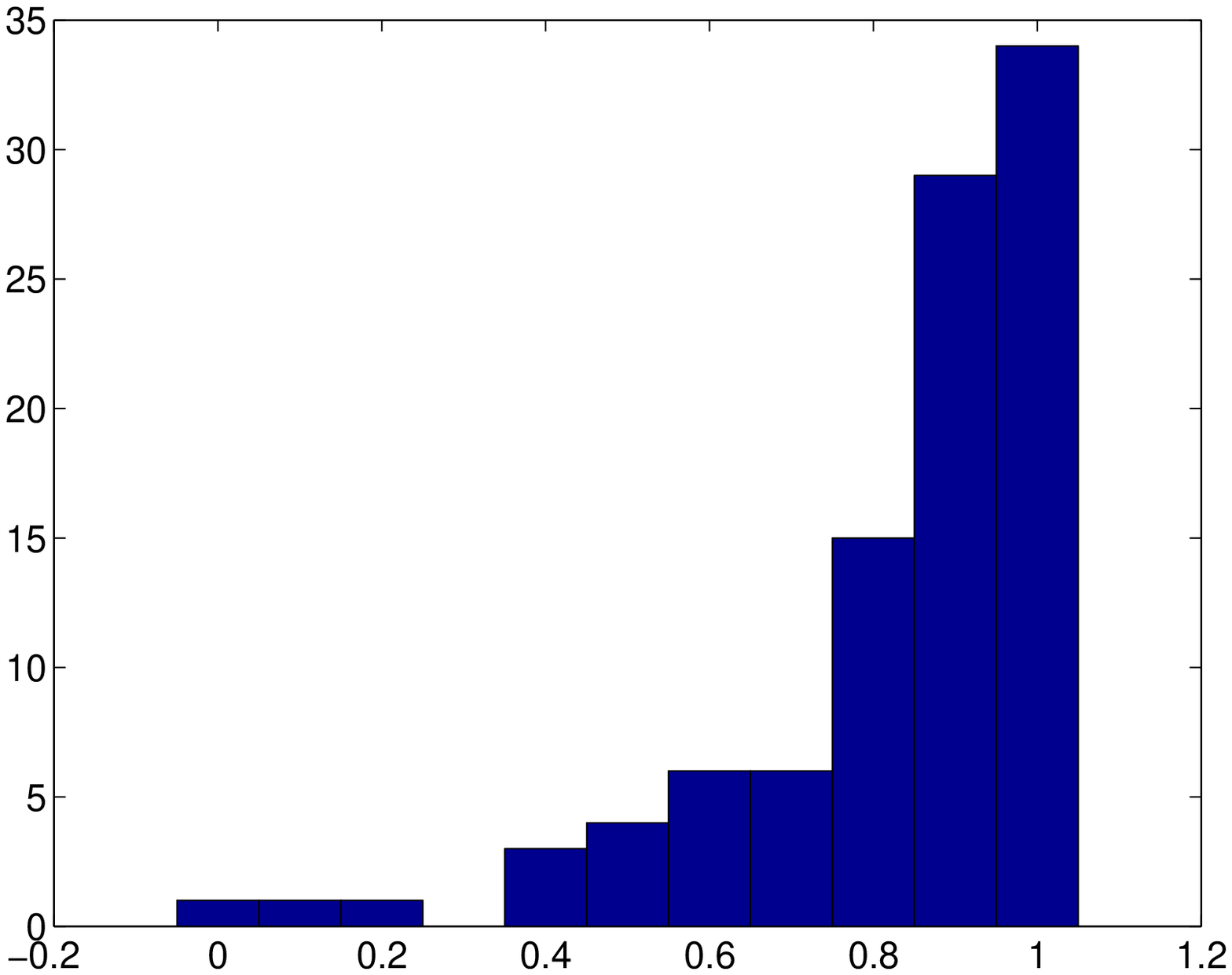}
\caption{{  The left figure shows the error $\| |AA^\dagger z|-b\|$ vs. the number of iteration via ADM with rank one~\cite{Wen}.  The right figure shows the histogram $\| x_0x_0^\top-x^*{x^*}^\top\|$ of  $100$ reruns.}}
\label{Failure}
\end{center}
\end{figure}

Second,  we demonstrate a few experiments where the ADM also fails to converge. The convergence failure  sheds
  light on the importance of    the two proposed  assumptions in Section~\ref{Recovery}.

%%%%%%%%%%%% incorrect

We sort  a set of random generated sensing vectors   $\{a_i\in \mathbf{R}^{100}\}_{i=1}^{400}$, such that \[ \textrm{
 $|a_i\cdot x_0|\le |a_j\cdot x_0|$ for all $i<j$.}\] That is, the indices are sorted according to the values $b_i$. 
We consider three different manners of  selecting $200$ sensing vectors  $\{a_i\}$:  (1)
the vectors with the smallest  indices,(2) the vectors with the largest indices, and
 (3) a combination with $199$ small indices and one large index. Finally, we compare these results with  the result using  a random selection of sensing vectors,  as shown in Fig.~\ref{Selection}. Here, we fix rank $r=1$ and $\beta=0.01$. Clearly, the combination with smaller indices and larger indices performs best. 

\begin{figure}[htbp]
\begin{center}
\includegraphics[width=0.32\textwidth]{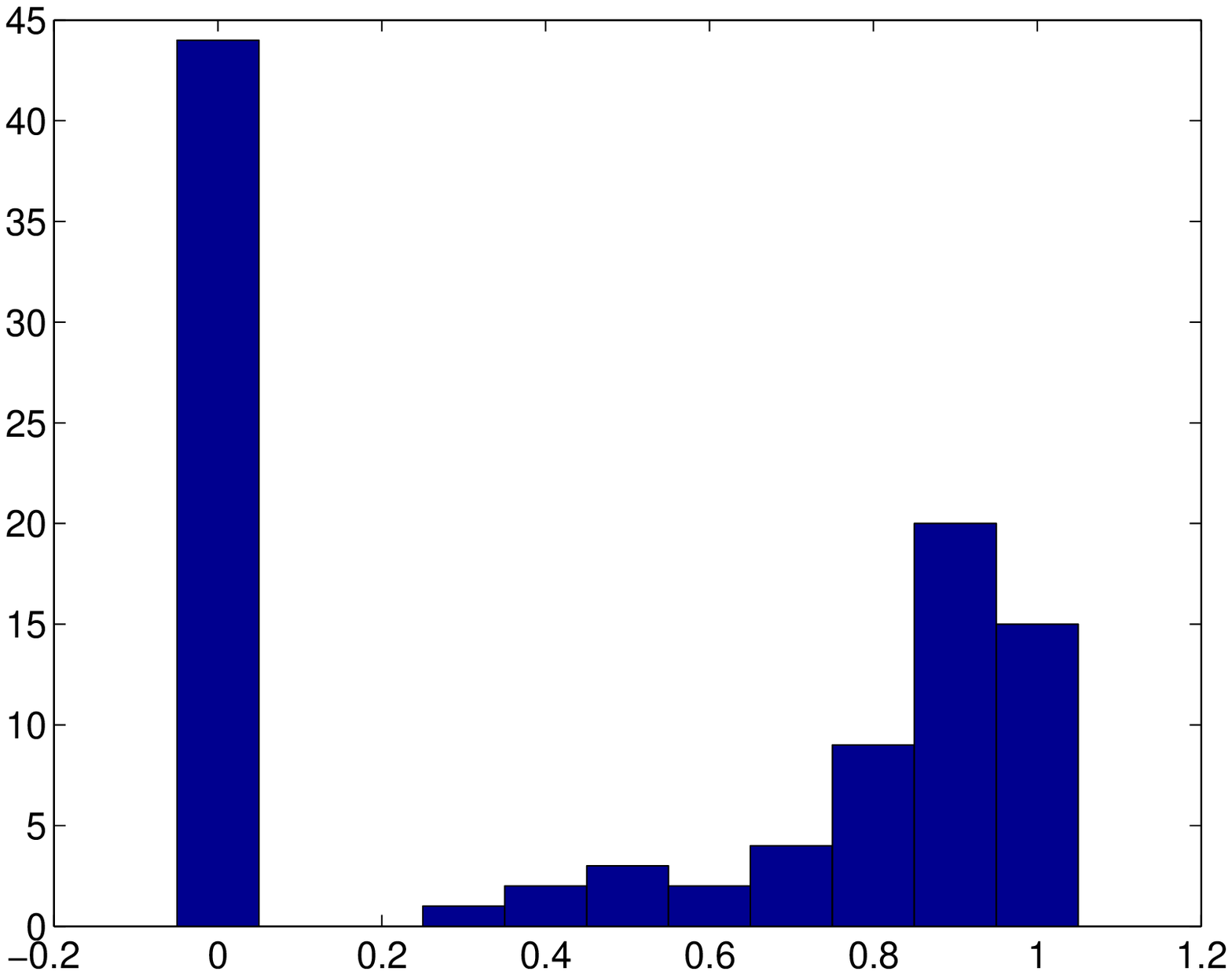}
\includegraphics[width=0.32\textwidth]{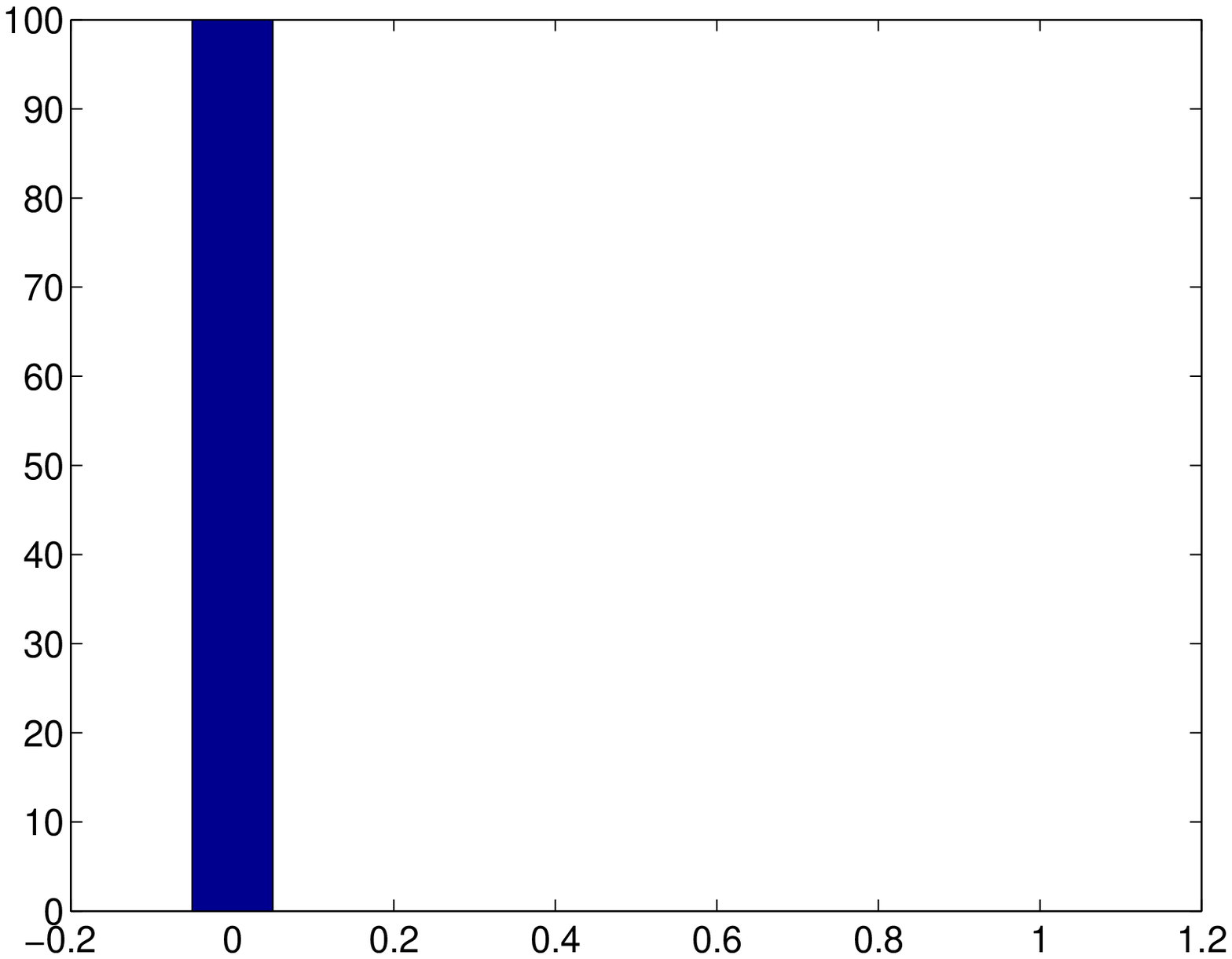}
\includegraphics[width=0.32\textwidth]{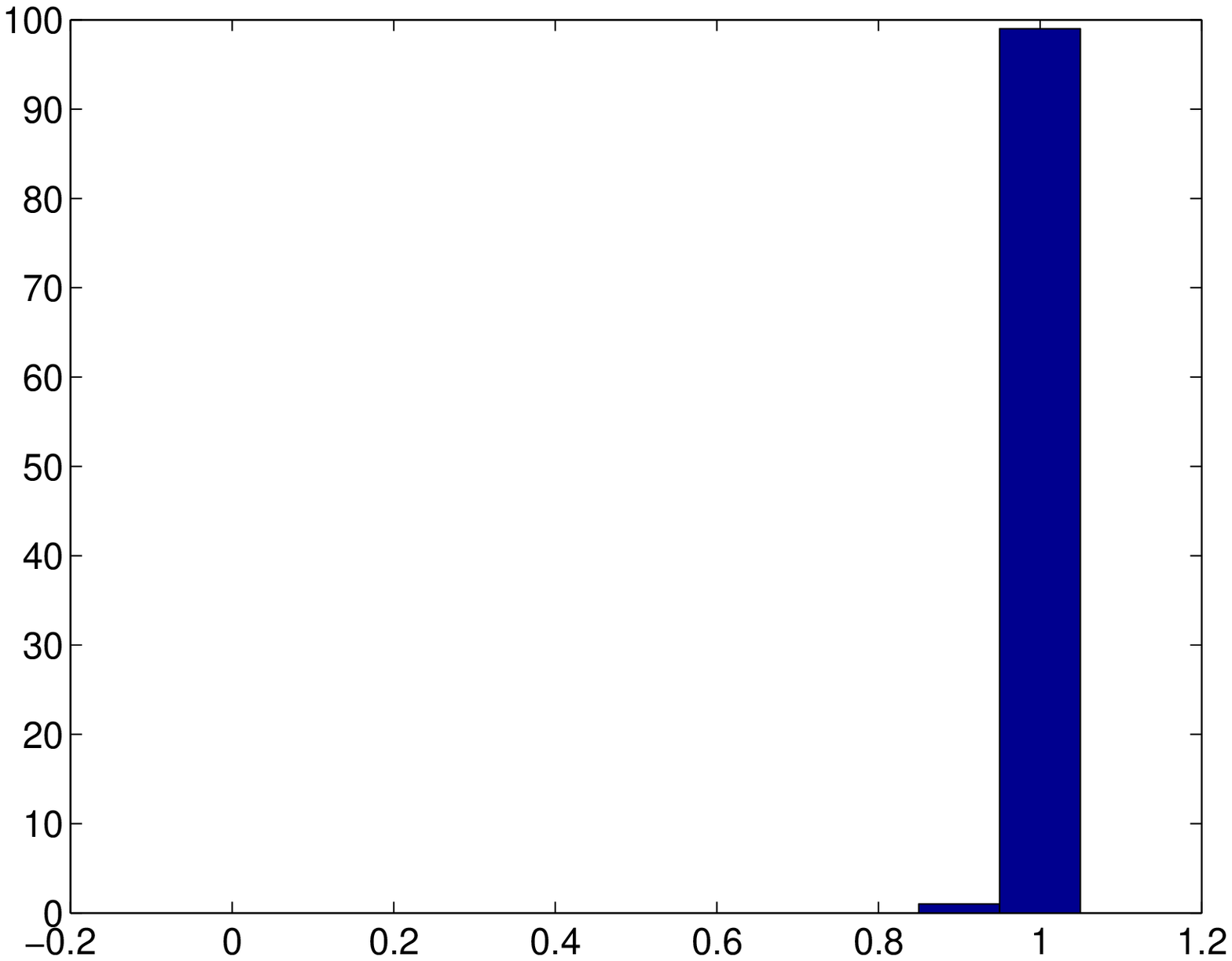}
\includegraphics[width=0.32\textwidth]{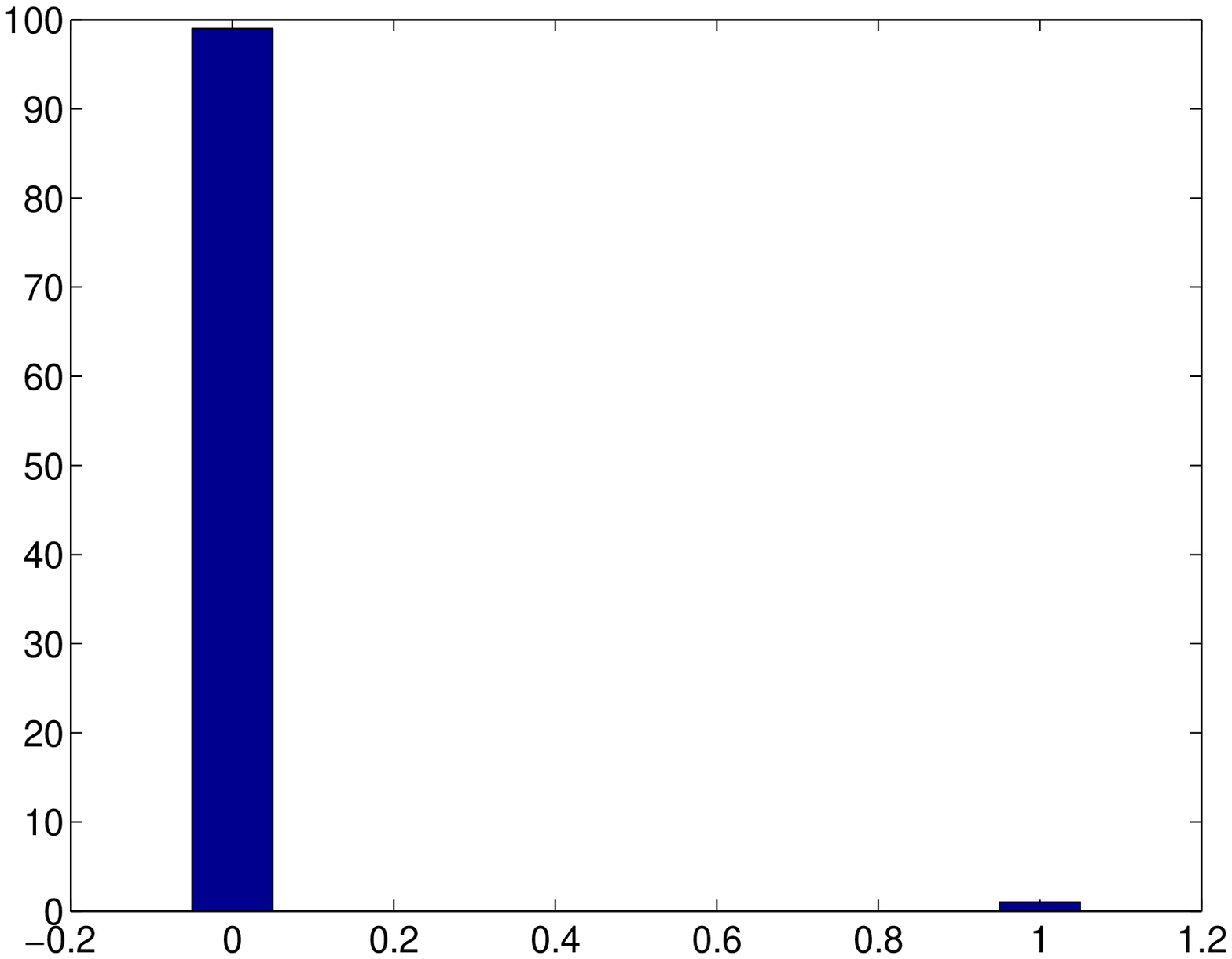}

\caption{{    Figure shows the histogram  $\| x_0x_0^\top-x^*{x^*}^\top\|$ under four different sets of $\{a_i\}_{i=1}^{400}$.   In the top row, Left, middle and right subfigures show the results with $\{a_i\}_{i=1}^{200}$, with $\{a_i\}_{i=1}^{199}\cup \{a_{400}\}$ and with $\{a_i\}_{i=201}^{400}$. The bottom subfigure shows the result when randomly sampling $200$ sensing vectors. }}
\label{Selection}
\end{center}
\end{figure}

%In the coming experiment, we use $n=30$ and $N=2n=60$.  Let $x_0$ be a random vector in $R^n$. We demonstrate the histogram of $\|Pz|-b|\|$ with iteration number $500$. Clearly, the ADM with $r=2$ or $r=3$ works better than ADM with $r=1$. We can also clearly see the better performance of the standardization with $\beta=1$ and $\beta=0.01$.
%
%\begin{figure}[htbp]
%\begin{center}
%%\includegraphics[width=0.45\textwidth]{r1NoLa}
%
%%\includegraphics[width=0.45\textwidth]{r2NoLa}
%
%%\includegraphics[width=0.45\textwidth]{r1standNoLa}
%%\includegraphics[width=0.45\textwidth]{r2standNoLa}
%%\includegraphics[width=0.45\textwidth]{r1La}
%%\includegraphics[width=0.45\textwidth]{r2La}
%\includegraphics[width=0.32\textwidth]{r1ALa1}
%\includegraphics[width=0.32\textwidth]{r2ALa1}
%\includegraphics[width=0.32\textwidth]{r3ALa1}
%\includegraphics[width=0.32\textwidth]{r1ALa2}
%\includegraphics[width=0.32\textwidth]{r2ALa2}
%\includegraphics[width=0.32\textwidth]{r3ALa2}
%
%
%
%
%\caption{{  Figures show the histogram of  $\| |Pz|-b\|$ of $100$ independent trials. The first two rows are the case $\beta=1$ and $\beta=0.01$ with $500$ iterations. The results with $r=1,2$ and $r=3$ are shown in the  left, middle and right  columns respectively. 
% }}
%\label{default}
%\end{center}
%\end{figure}
%

\subsection{Comparison experiments with noises}\label{NoiseEx}
In this subsection, we demonstrate the performance of the ADM with $r=1$ and $r>1$ on a number of simulations, where  
Gaussian white noise is added.  
The noise-corrupted
 data, $b$, is generated,
\[
b^2=\max ((Ax_0)^2+noise,0).
\]
The signal-to-noise ratio is defined by
\[
SNR=10\log_{10}\frac{\|Ax_0\|^2}{\|noise\|_F}.
\]

In Fig.~\ref{Noise}, we consider  $A$ to be a real Gaussian  random matrix with $N=2n$.
We rerun  the experiments $200$ times to test the effect of random initialization.
The first row shows the histogram result with $n=30$ and  $noise=0$. All the algorithms with $r=1,2,$ and $3$ work well.
The second row shows the histogram result with $n=30$ and   $SNR=29$.
%The third row shows the results with $n=100$ and   $SNR=16$. 
Here, we use $\beta=0.001$. Obviously  the algorithms with $r>1$ have better performances. 
\begin{figure}[htbp]
\begin{center}
\includegraphics[width=0.32\textwidth]{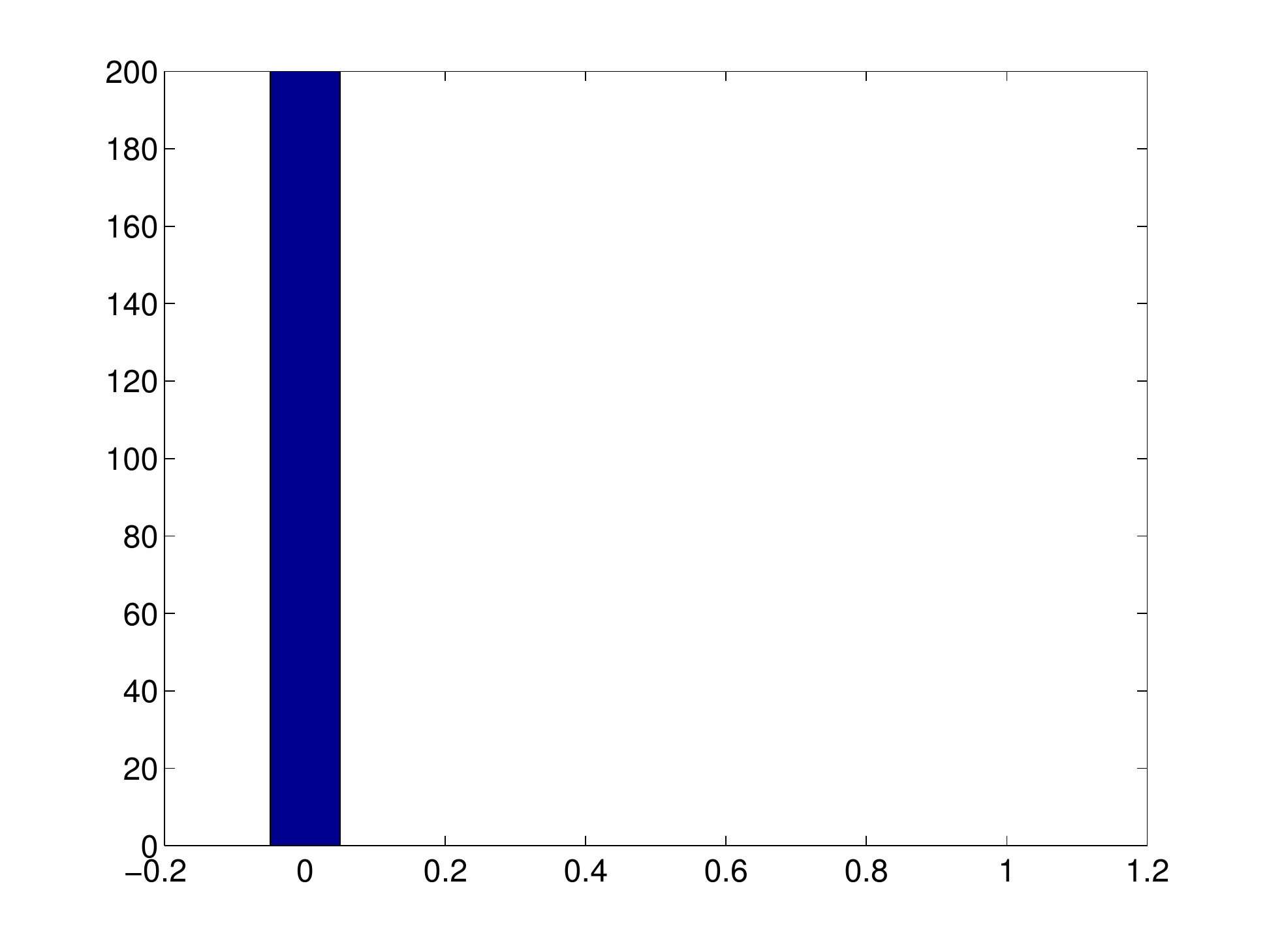}
\includegraphics[width=0.32\textwidth]{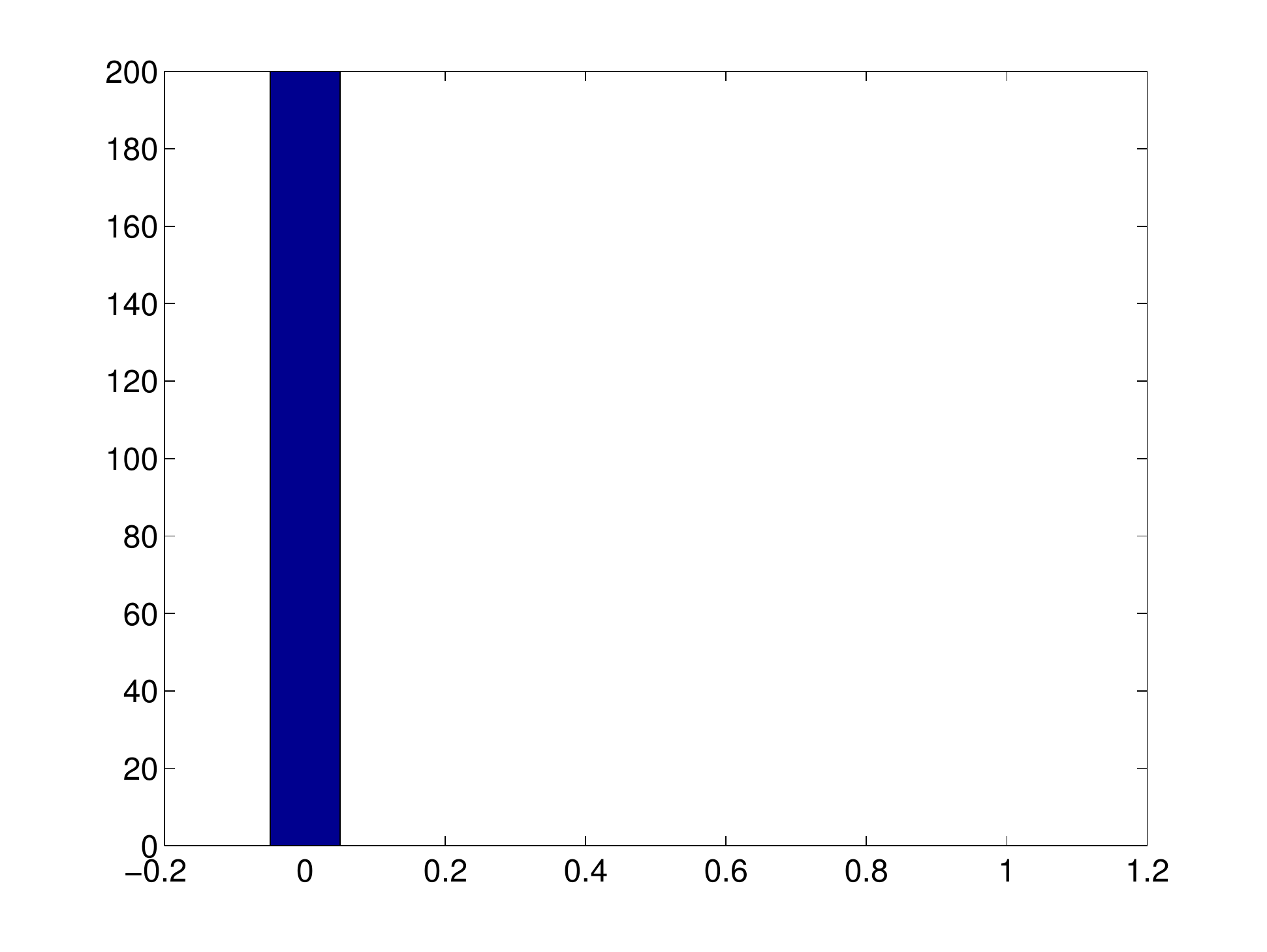}
\includegraphics[width=0.32\textwidth]{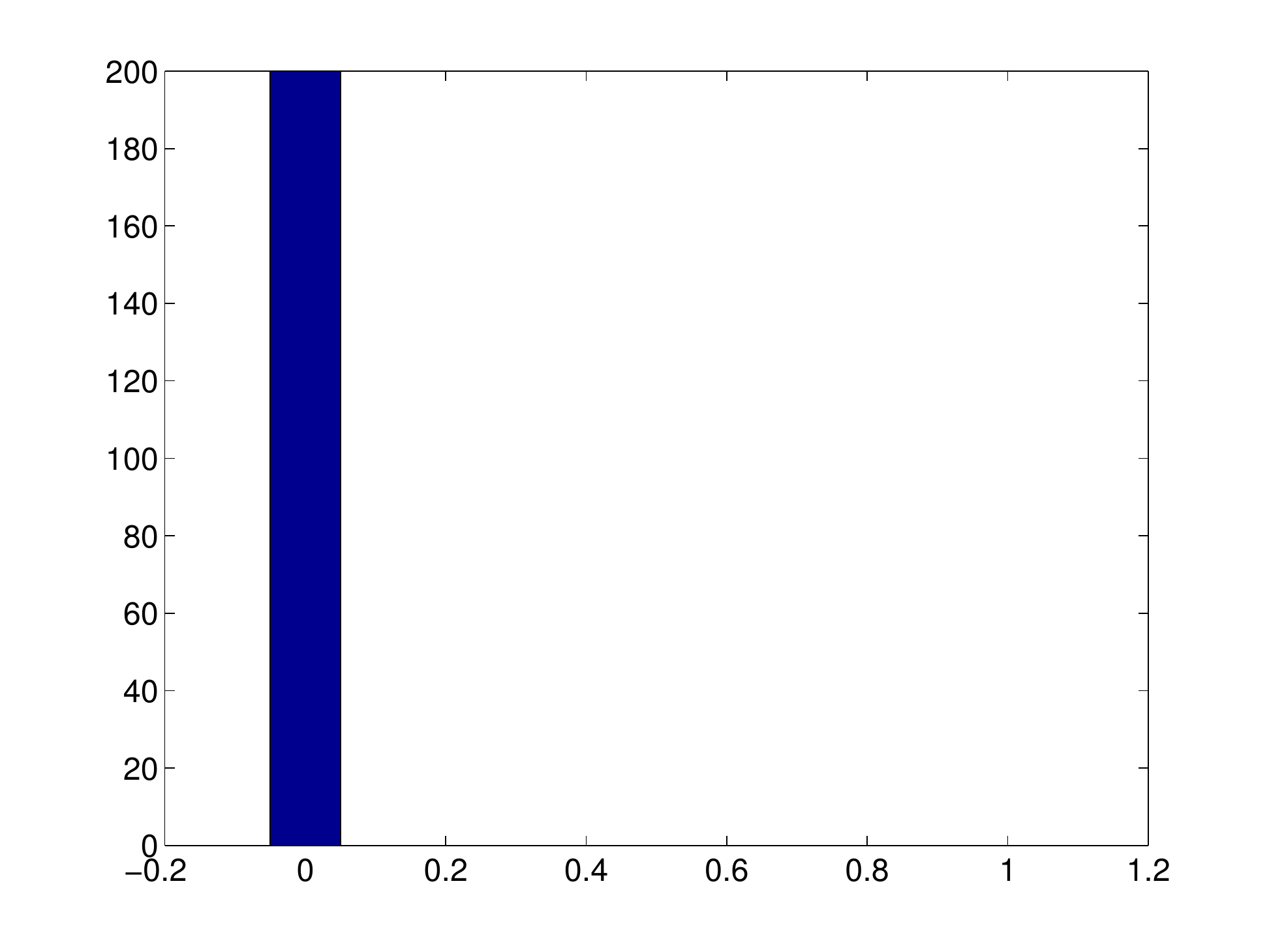}
\includegraphics[width=0.32\textwidth]{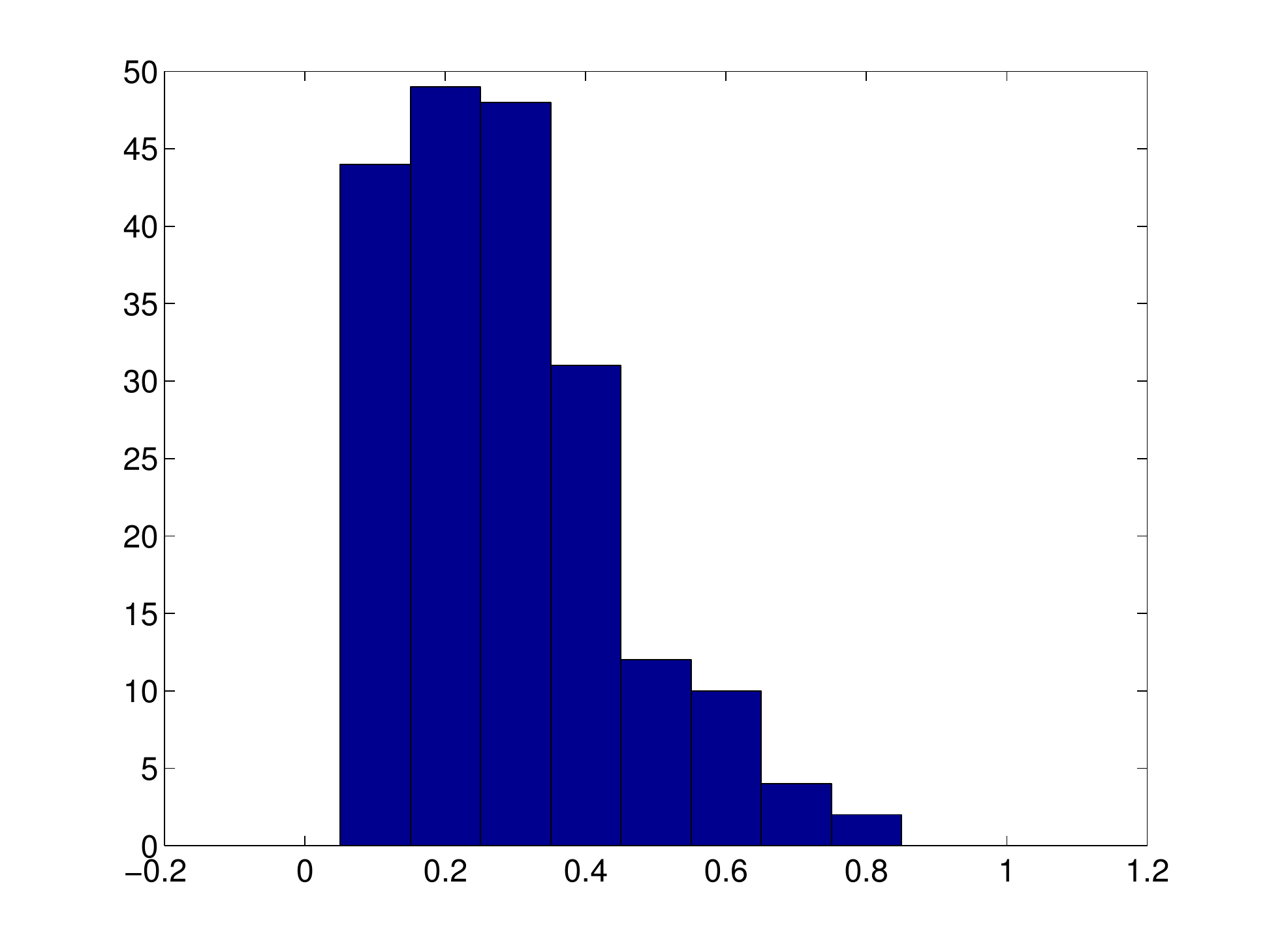}
\includegraphics[width=0.32\textwidth]{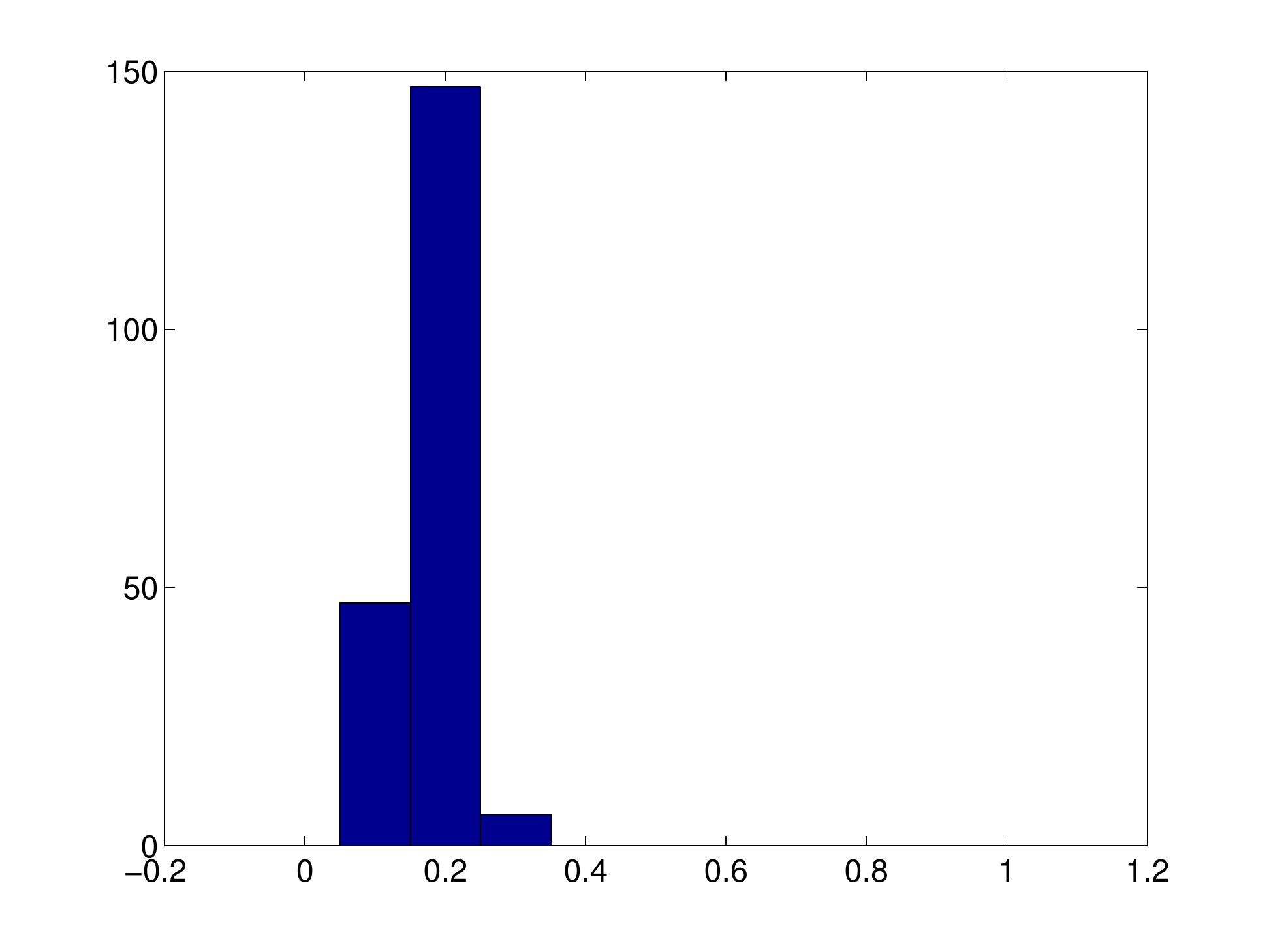}
\includegraphics[width=0.32\textwidth]{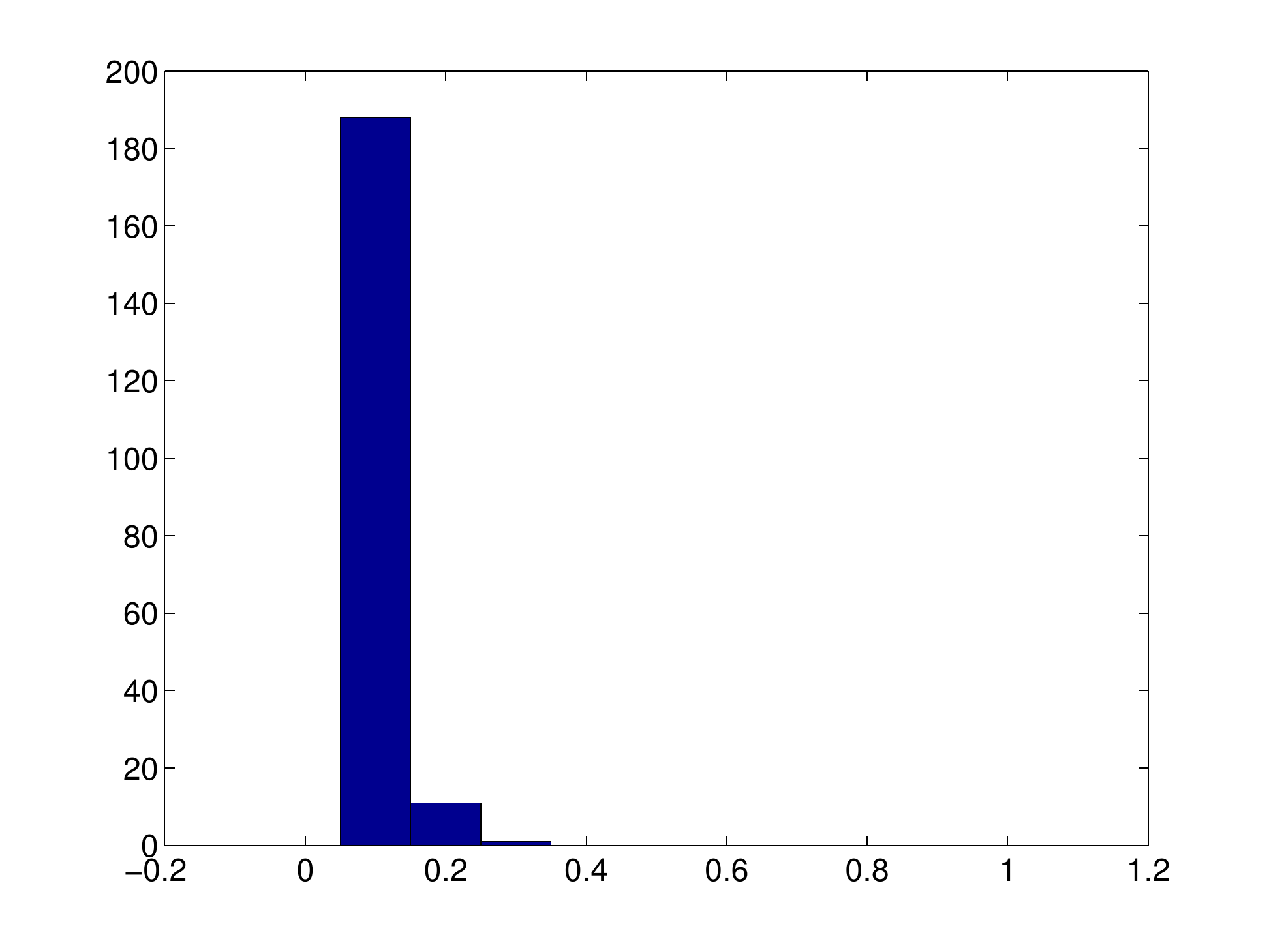}

\caption{{  Figure shows the histogram  $\| x_0x_0^\top-x^*{x^*}^\top\|$ under the noise effect.  Left, middle and right columns show the results with  rank  $r=1$, $r=2$ and  $r=3$.  Here we use the random initialization. }}
\label{Noise}
\end{center}
\end{figure}

Let  $n=30$, $N=3n$ with $\beta=0.01$.
In Fig.~\ref{Noise1}, we demonstrate the comparison between  the random initialization and the singular vector initialization, i.e.,  the initialization is chosen to be the singular vector corresponding to the least singular value of $A_{I} \in \mathbf{R}^{45\times 30}$.
  Data $b$ is generated with $noise=2\times 10^{-4}  \times Normal(0,1)$, $SNR=25dB$.
  Furthermore, with the presence  of noise, when ADM  with $r=1$ is employed, the difference between the two initializations  is very little, in contrast to  the simulation result shown in  Remark~\ref{Simulation}.

\begin{figure}[htbp]
\begin{center}
\includegraphics[width=0.32\textwidth]{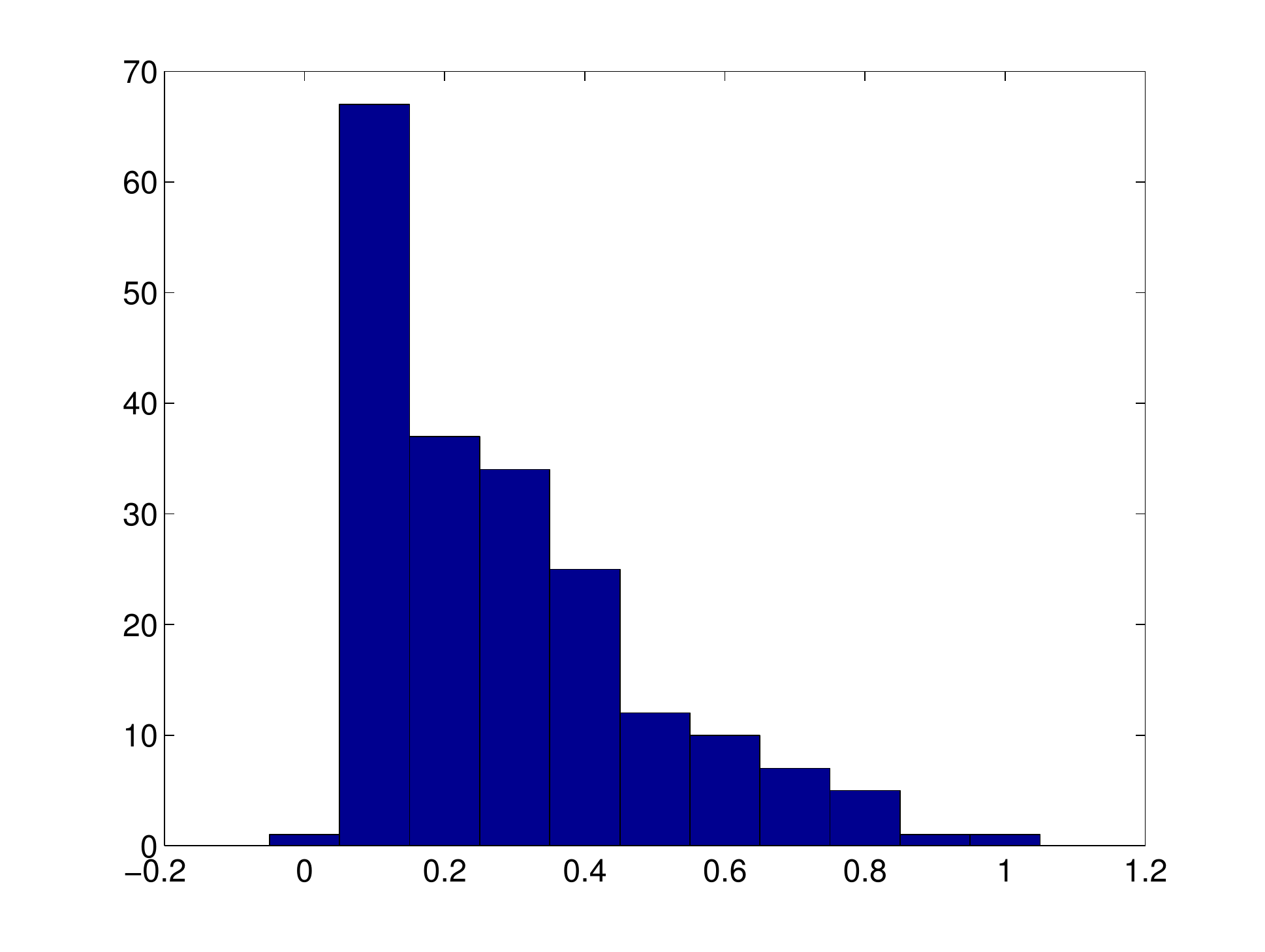}
\includegraphics[width=0.32\textwidth]{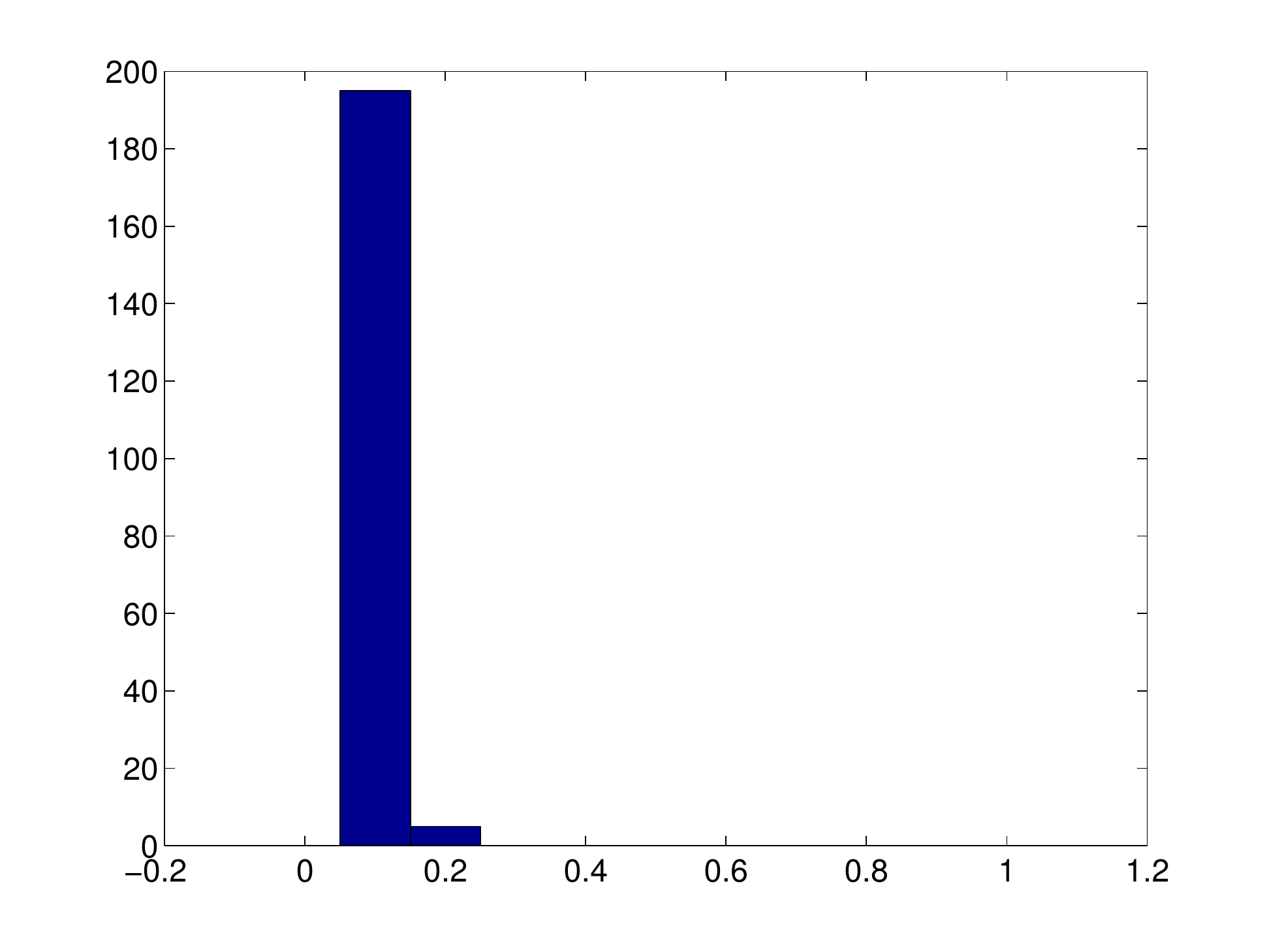}
\includegraphics[width=0.32\textwidth]{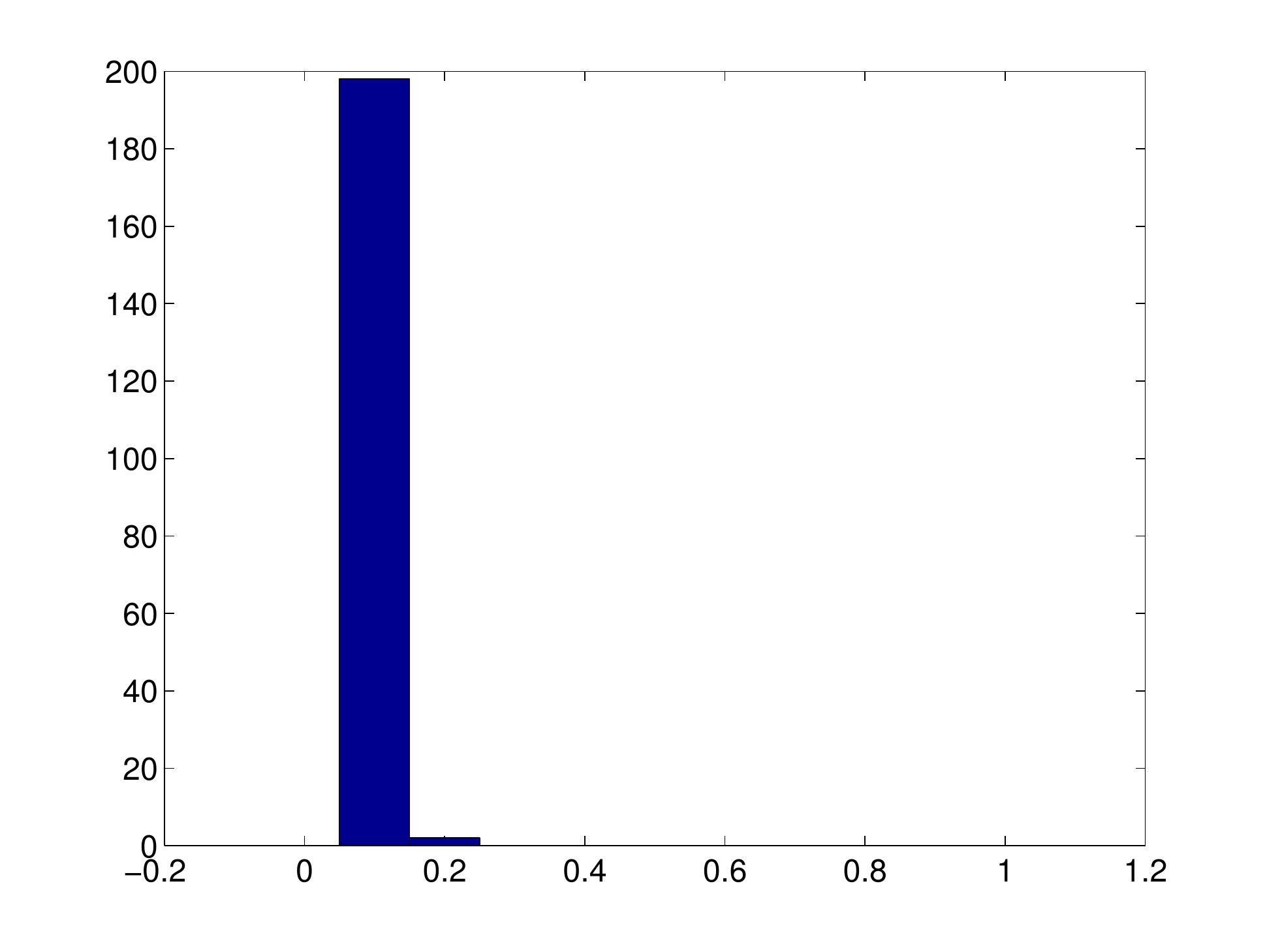}
\includegraphics[width=0.32\textwidth]{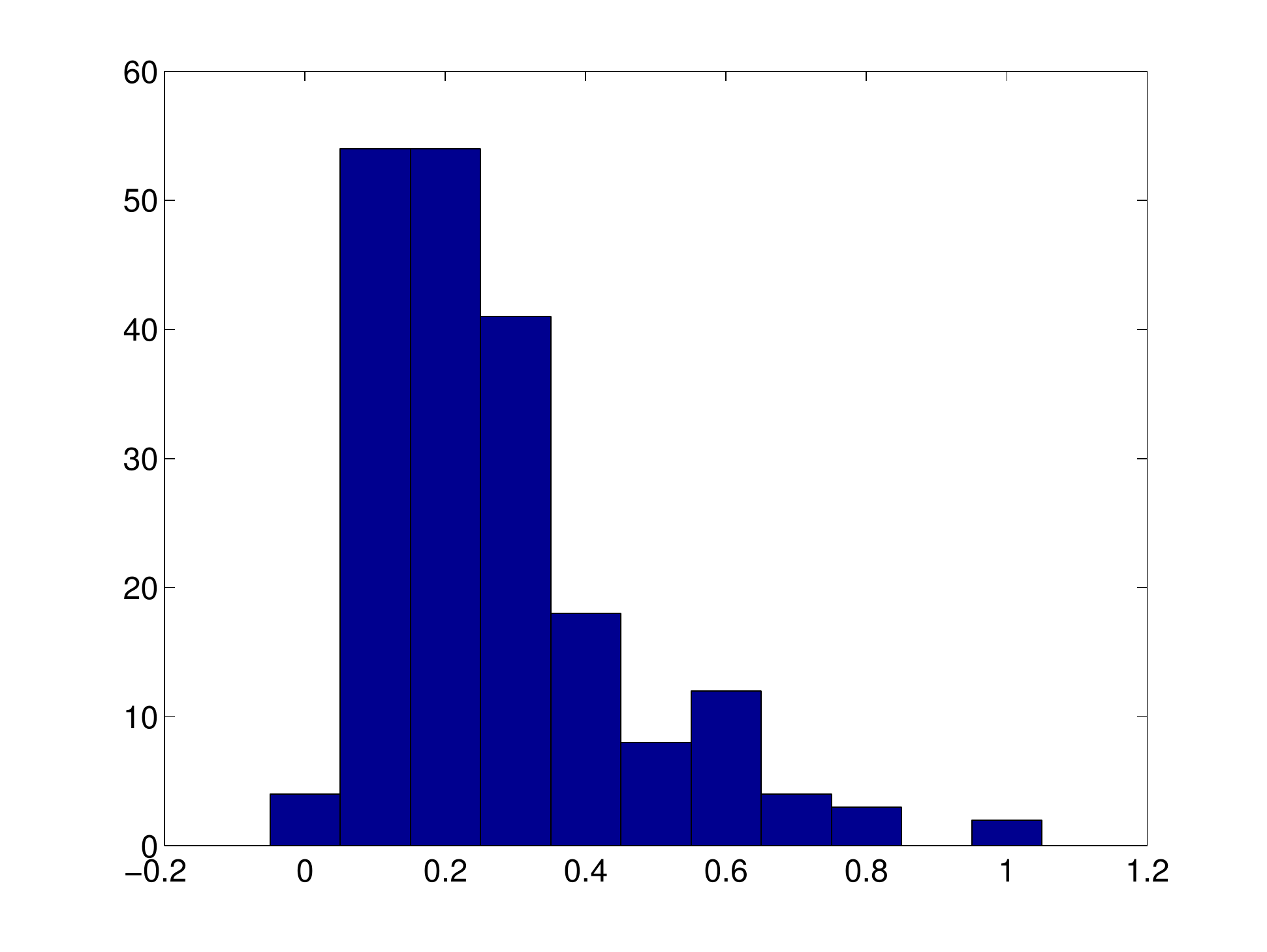}
\includegraphics[width=0.32\textwidth]{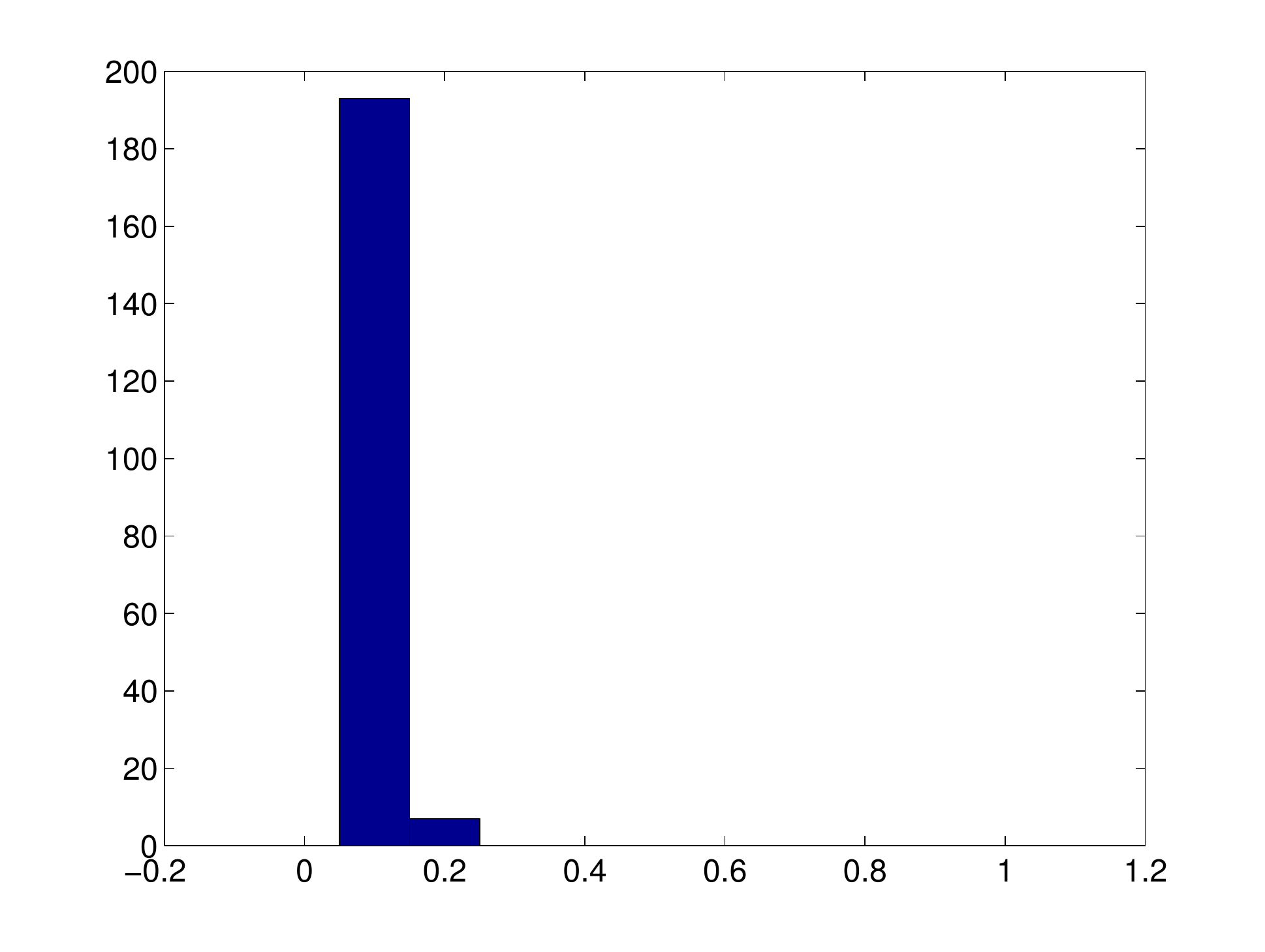}
\includegraphics[width=0.32\textwidth]{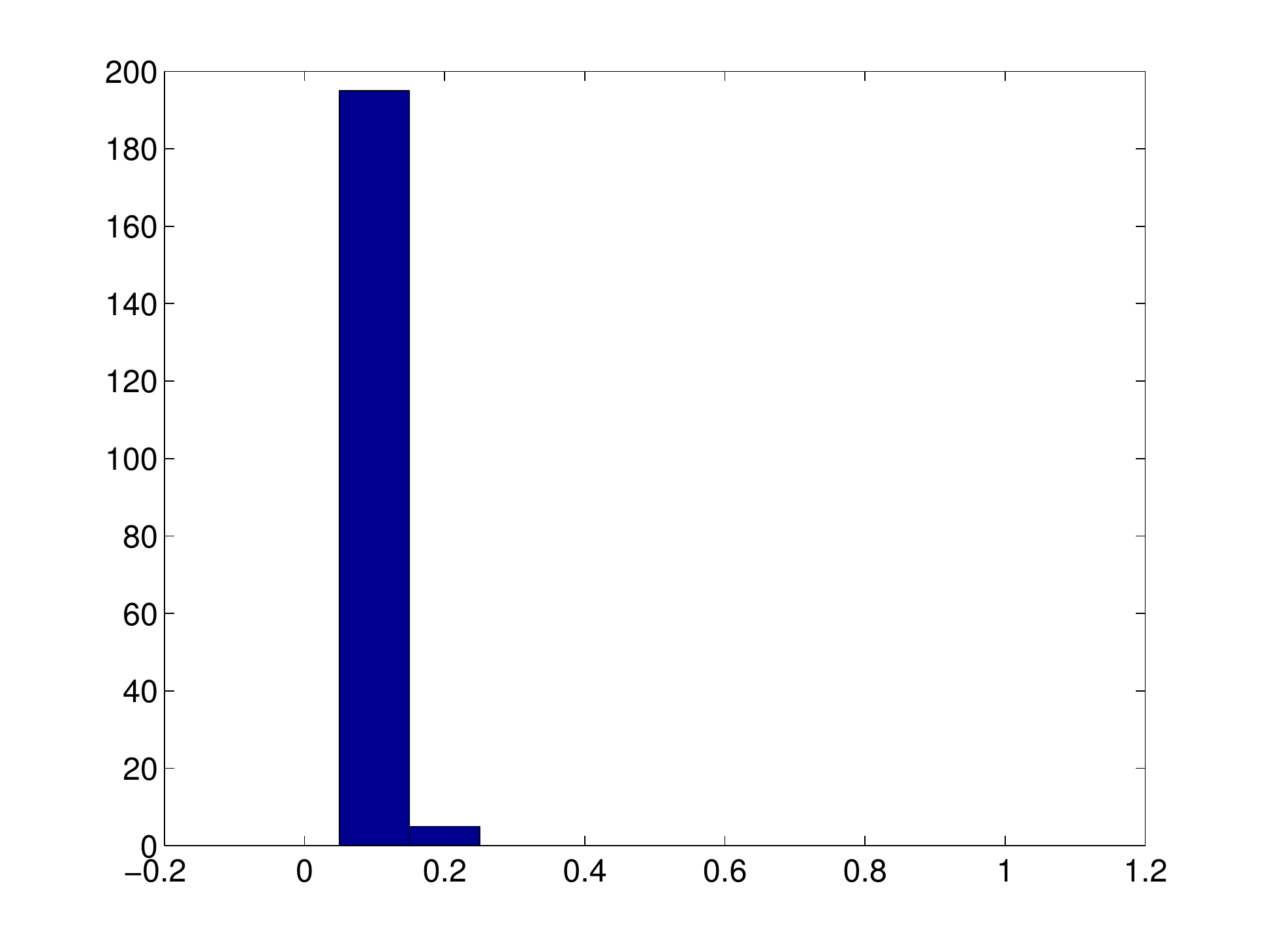}

\caption{{  Figure shows the histogram  $\| x_0x_0^\top-x^*{x^*}^\top\|$ of 200 trials under the noise effect.  Left, middle and right columns show the results with  rank  $r=1$, $r=2$ and  $r=3$.  Here, we use the proposed initialization in the top row and the random initialization in the second row. }}
\label{Noise1}
\end{center}
\end{figure}

\subsection{Phase retrieval experiments}
Next, we report phase retrieval simulation results (Fourier matrices), with    $x_0$ being real, positive images. Images are reconstructed subject to the positivity constraints ( i.e., the leading singular vector).  The results are provided to show some advantage of  ADM with $r=2$  over ADM with $r=1$. Here we use $\beta=0.1$ in the following experiment.

According to our experience, the phase retrieval with the Fourier matrix is a very difficult problem, in particular in the presence  of noise.   
To alleviate the difficulty, researchers have suggested random illumination  to enforce the uniqueness of solutions~\cite{IOPORT.06071995}.
It is known that the phase retrieval has a unique solution up to  three classes: constant global phase,   spatial shift, and conjugate inversion. With high probability absolute uniqueness holds with a random phase illumination; see Cor. 1~\cite{IOPORT.06071995}.Our experiences show that the random phase illumination works much better than the above uniform illumination. 

In Fig.~\ref{FFTlena}, we demonstrate the the ADM with $r=1,2$ on the  images with random phase illumination.  Let $x_0\in \mathbf{R}^{300\times 300}$ be the intensity of the Lena image\footnote{We downsample the Lena image from http://www.ece.rice.edu/~wakin/images/ by approximately  a factor $2$ and use  zero padding with the oversampling rate\cite{Millane:90}\cite{Miao:98} $1.23$. }, see the bottom subfigure. We add noise and generate the data
\[
b^2=\max(|Ax_0|^2+noise,0),
\]
where $A$ is the Fourier matrix. The SNR is $39.8dB$ and the oversampling is $1.23$. Reconstruction errors 
$\|\frac{x^*}{\|x^*\|_F}-\frac{x_0}{\|x^*_0\|_F}\|_F$ for rank one and rank two are $0.126$ and $0.109$, respectively. The ADM with $r=2$ has a better reconstruction.
 
\begin{figure}[htbp]
\begin{center}

\includegraphics[width=0.4\textwidth]{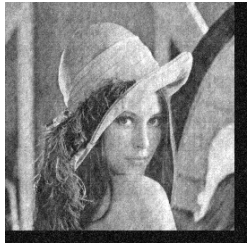}
\includegraphics[width=0.4\textwidth]{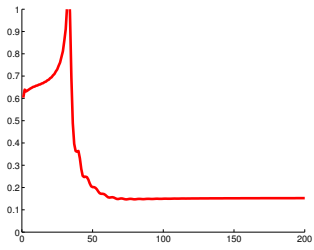}
\includegraphics[width=0.4\textwidth]{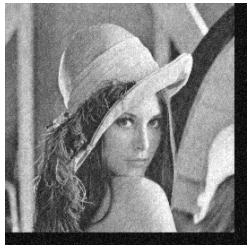}
\includegraphics[width=0.4\textwidth]{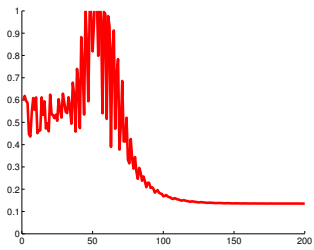}
\includegraphics[width=0.4\textwidth]{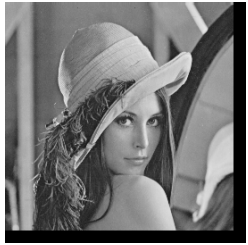}

\caption{\bf Figures show the reconstructed images and the error $\| |AA^\dagger z|-b\|_F$ vs. the number of iteration via ADM with rank  $r=1$ (the first row) and $r=2$ (the second row). 
 }

\label{FFTlena}
\end{center}
\end{figure}

\subsection{Conclusions}
In this paper, we discuss the rank-one matrix recovery via two   approaches. First, the rank-one matrix is computed among the Hermitian matrices as in PhaseLift. We make the observation that  matrices in the feasible set have  equal trace norm via the measurement matrices with orthonormal columns. Experiments show that with the aid of these orthogonal frames, exact recovery occurs under a smaller $N/n$ ratio compared with the  PhaseLift in both real and complex cases.
In the second part of the paper, we discuss the ``lifting'' of  the nonconvex  alternating  direction minimization method from rank-one to rank-$r$ matrices, $r>1$. The benefit of this relaxation  cannot be overestimated, because the construction of large Hermitian matrices is avoided, as is the associated Hermitian matrices projection.  Comparing with the ADM with rank-one, the ADM with rank $r>1$ performs better  in recovering noise-contaminated signals, which is demonstrated in simulation experiments.

Another contribution is  the error estimate between  the unknown signal and the singular vector corresponding to the least singular value.
 The initialization has an effect of importance in the nonconvex minimization. We demonstrate that a good initialization can be the least singular vector of the subset of sensing vectors corresponding to the small measurement values $b_i$. In the case of real Gaussian matrices, the error  can be reduced, as the number of measurements grows at a rate proportional to the dimension of  unknown signals. 
One of our  future works is the generalization of the error estimate to complex frames, in particular  the case of  
 the  Fourier matrix.

\appendix
\section{Standardization of $A$}

In the following, we will prove Theorem~\ref{Standard} in several steps. 
We discuss the existence first. The uniqueness analysis will be shown later.  Fixing $A$, let $\mathcal{D}$ be  the  inverse matrices of diagonal matrices $D$, \[ \mathcal{D}:=\{D^{-1}\in \mathbf{R}^{N\times N}: \|D^{-1/2} A\|_F^2=\sum_{i=1}^N D_{i,i}^{-1} \sum_{j=1}^n A_{i,j}^2=N, D_{i.i}\ge 0\}.\]
Clearly $\mathcal{D}$ is nonempty and convex compact. 
In fact, $D_{i,i}$ has a positive lower bound, \[ D_{i,i}\ge N^{-1} \sum_{j=1}^n A_{i,j}^2 \textrm{ for all $i$.}\] For each $D^{-1}\in \mathcal{D}$, let  $f: \mathcal{D}\to \mathcal{D}$  be the   function    \[  f(D^{-1})=\hat D^{-1}, \textrm{  where
$QB=D^{-1/2}A$ is the QR factorization},\]
 and  each row of 
$\hat D^{-1/2}A B^{-1}$ has   norm one.  In fact, the function $f$ generates iterations  $\{(D^{k})^{-1}\}_{k=0}^\infty$ with  
 $(D^{k+1})^{-1}=f((D^{k})^{-1})$.
That is, start with $Q^0=A$. Repeat the two steps for $k=0,1,2,\ldots$ until it converges: 
\begin{eqnarray*}
&(ii)& \textrm{ Normalize the row of $Q^k$ by $(D^k)^{-1/2}Q^k$;} \\
&(ii)& \textrm{Take the QR factorization:}
 (D^k)^{-1/2}Q^{k-1}=Q^{k}R^{k}.
\end{eqnarray*}
Since $D^{-1/2} A$ has rank $n$,   then $B$ has rank $n$ and $B^{-1}$ exists. The function $f$ is well defined: Once $B$ is given, then  choose the diagonal matrix $ D$ to be that which  normalizes the rows of  $A B^{-1}$.
According to  
Brouwer's fixed-point theorem, we have the existence of $D$, such that $D^{-1/2}AB^{-1}=Q$ consists of orthogonal columns and each row has  norm one.

Before the uniqueness proof, we  state  one equation of $D$.
\begin{prop}
The diagonal matrix $D$ satisfies the equation,
\begin{equation}\label{EqD} (n/N) D_{i,i}=(A (A^\top D^{-1} A)^{-1}A^\top )_{i,i}.\end{equation}
\end{prop}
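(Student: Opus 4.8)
\emph{Proof proposal.} The plan is to read the identity off directly from the standardizing factorization supplied by Theorem~\ref{Standard}. Write $D^{-1/2}A = QB$ with $Q^\top Q = I_{n\times n}$, $(QQ^\top)_{i,i} = n/N$ for every $i$, and $B \in \mathbf{R}^{n\times n}$ invertible. First I would solve for $Q$: since $B$ is nonsingular, $Q = D^{-1/2}A B^{-1}$, equivalently $A B^{-1} = D^{1/2}Q$ (here $D^{1/2}$, $D^{-1/2}$ make sense because $D_{i,i}>0$).

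Next I would express $A^\top D^{-1} A$ through $B$. From $Q = D^{-1/2}A B^{-1}$ and $Q^\top Q = I$ we get $B^{-\top} A^\top D^{-1} A B^{-1} = I$, hence
\[
A^\top D^{-1} A = B^\top B ,
\]
which is invertible (consistent with $A$ having rank $n$ and $D$ being positive definite), with $(A^\top D^{-1} A)^{-1} = B^{-1}B^{-\top}$. Substituting into the right-hand side of Eq.~(\ref{EqD}) gives
\[
A (A^\top D^{-1} A)^{-1} A^\top = A B^{-1} B^{-\top} A^\top = (D^{1/2}Q)(D^{1/2}Q)^\top = D^{1/2} Q Q^\top D^{1/2}.
\]
Taking the $(i,i)$ entry and using that $D^{1/2}$ is diagonal,
\[
\bigl(A (A^\top D^{-1} A)^{-1} A^\top\bigr)_{i,i} = D_{i,i}^{1/2}\,(QQ^\top)_{i,i}\,D_{i,i}^{1/2} = D_{i,i}\,(QQ^\top)_{i,i} = (n/N)\,D_{i,i},
\]
which is exactly the asserted identity.

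I do not anticipate a genuine obstacle: the proposition is essentially a bookkeeping restatement of the defining properties of $Q$ in Theorem~\ref{Standard}, and the only points requiring a word of care are the invertibility of $B$ (equivalently of $A^\top D^{-1} A$), which follows from the rank-$n$ assumption on $A$ together with $D_{i,i}>0$, and the symmetry $(D^{1/2})^\top = D^{1/2}$ used in the last display. If one instead wanted a derivation of Eq.~(\ref{EqD}) that does \emph{not} presuppose Theorem~\ref{Standard} — say, to use it inside the uniqueness argument for $D$ — one would read it as the fixed-point/optimality condition for the row-normalization map $f$ introduced in the appendix; but given the theorem, the computation above suffices.
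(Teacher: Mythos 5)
Your proof is correct and follows essentially the same route as the paper: both start from the factorization $D^{-1/2}A = QB$, use $B^\top B = A^\top D^{-1}A$ and $(QQ^\top)_{i,i} = n/N$, and identify $A(A^\top D^{-1}A)^{-1}A^\top$ with $D^{1/2}QQ^\top D^{1/2}$. Your write-up is merely a slightly more detailed ordering of the paper's two-line computation.
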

\begin{proof}
 
 According to $D^{-1/2} A=QB $, we have
 \[
 (n/N)D_{i,i}=(D^{1/2}QQ^\top D^{1/2})_{i,i}=(A (B^\top B)^{-1} A^\top )_{i,i}.
 \]
 Note that  $B^\top B=A^\top D^{-1/2} Q^\top Q D^{-1/2} A=A^\top D^{-1} A$. Thus, \[
 \; (n/N)D_{i,i}=(A (A^\top D^{-1} A)^{-1}A^\top )_{i,i}.
\]
\end{proof}

\begin{prop}\label{Jens}
Let $p_i\in (0,1)$, $i=1,\ldots, n$ with $\sum_{i=1}^n p_i=1$. Let $\lambda_i>0$ for $i=1,\ldots, n$. Then 
\[
\sum_{i=1}^n p_i\lambda_i \ge (\sum_{i=1}^n p_i\lambda_i^{-1})^{-1},
\]
where equality holds if and only if $\{p_i\}_{i=1}^n$ are equal.
\end{prop}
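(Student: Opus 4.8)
The plan is to recognize the claimed bound as the weighted arithmetic--harmonic mean inequality for the weights $p_i$ and the positive numbers $\lambda_i$, and to obtain it from convexity. First I would invoke Jensen's inequality for the function $\phi(t)=1/t$, which is convex on $(0,\infty)$: since $(p_1,\dots,p_n)$ is a probability vector,
\[
\Big(\sum_{i=1}^n p_i\lambda_i\Big)^{-1}=\phi\Big(\sum_{i=1}^n p_i\lambda_i\Big)\le \sum_{i=1}^n p_i\,\phi(\lambda_i)=\sum_{i=1}^n p_i\lambda_i^{-1}.
\]
Because both $\sum_i p_i\lambda_i$ and $\sum_i p_i\lambda_i^{-1}$ are strictly positive, taking reciprocals reverses the inequality and yields $\sum_i p_i\lambda_i\ge(\sum_i p_i\lambda_i^{-1})^{-1}$, which is the assertion. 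An equivalent one-line route is Cauchy--Schwarz applied to the vectors with entries $\sqrt{p_i\lambda_i}$ and $\sqrt{p_i/\lambda_i}$, giving $(\sum_i p_i\lambda_i)(\sum_i p_i\lambda_i^{-1})\ge(\sum_i p_i)^2=1$; either derivation is routine.

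The step I would be most careful about is the equality statement. One reads it off from the equality case of whichever inequality is used: Jensen is an equality precisely when the distribution is concentrated on a level set of $\phi$, and since $\phi(t)=1/t$ is \emph{strictly} convex and every $p_i>0$ here, this forces the $\lambda_i$ to coincide (the Cauchy--Schwarz route gives the same conclusion via proportionality of the two vectors). I would therefore want to reconcile this characterization with the way the proposition is actually invoked in the uniqueness half of Theorem~\ref{Standard}: if the sharp equality condition is not needed there, it is enough to record the weaker but clean statement that the inequality is strict unless all $\lambda_i$ are equal, and I would phrase the conclusion accordingly.

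Beyond that, I do not expect any genuine obstacle — the inequality itself is a textbook consequence of the convexity of $t\mapsto 1/t$, and the only real thought goes into stating the equality case correctly.
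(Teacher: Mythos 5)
Your proof is correct and is essentially the paper's own argument: Jensen's inequality for the strictly convex function $t\mapsto 1/t$ applied to the probability vector $(p_1,\dots,p_n)$, followed by taking reciprocals. Your handling of the equality case is in fact more complete than the paper's (which states the inequality and stops), and your observation is right: equality forces the $\lambda_i$ to coincide, so the proposition's phrase ``$\{p_i\}_{i=1}^n$ are equal'' is evidently a typo for ``$\{\lambda_i\}_{i=1}^n$ are equal'' --- and it is precisely the $\lambda$-version of the equality condition that the paper invokes later when concluding $\mu_i^*=\mu_j^*$ in the uniqueness argument for Theorem~\ref{Standard}.
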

\begin{proof}
Let $f(x)=x^{-1}$ for $x>0$, which is strictly  convex. The statement is the application of Jensen inequality,
\[\sum_{i=1}^n p_i\lambda_i^{-1}\ge 
(\sum_{i=1}^n p_i\lambda_i )^{-1}.
\]

\end{proof}

\begin{prop}
Suppose that $Q$ is a standardized matrix satisfying the rank* condition.
Let $F^1$ be a positive diagonal matrix. Then   the iteration
\[
F^{k+1}_{i,i} =(N/n) (Q(Q^\top (F^k)^{-1} Q )^{-1}Q^\top )_{i,i},\; k=1,\ldots.
\]
yields $\lim_{k\to \infty } F^k=cI_{N\times N}$, where $c$ is some scalar.
\end{prop}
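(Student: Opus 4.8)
\emph{Proof idea.} The plan is to produce a scale-invariant Lyapunov functional that the iteration strictly decreases off its fixed set, to use the standardization of $Q$ to confine the iterates to a compact region after normalizing the scale, and to identify the fixed set as the ray $\{cI:c>0\}$; convergence will then follow.

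I would write $P^k:=(Q^\top (F^k)^{-1}Q)^{-1}$, so that $F^{k+1}_{ii}=\tfrac Nn (QP^kQ^\top)_{ii}$, and introduce
\[
\Psi(F):=\log\det(Q^\top F^{-1}Q)+\tfrac nN\sum_{i=1}^N\log F_{ii},
\]
which is finite (the rank* condition forces $Q$ to have rank $n$, so $Q^\top F^{-1}Q\succ0$) and scale invariant, $\Psi(\mu F)=\Psi(F)$. From $\log\det M=\min_{P\succ0}\big(\mathrm{tr}(MP)-\log\det P\big)-n$ one gets $\Psi(F)=\min_{P\succ0}\phi(F,P)$, with $\phi(F,P):=\mathrm{tr}(Q^\top F^{-1}QP)-\log\det P-n+\tfrac nN\sum_i\log F_{ii}$ and the minimum attained at $P=(Q^\top F^{-1}Q)^{-1}$; hence the iteration is exactly block minimization of $\phi$ — first in $P$, then in the diagonal $F$. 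Using $\Psi(F^k)=\phi(F^k,P^k)$, $\Psi(F^{k+1})\le\phi(F^{k+1},P^k)$ and $(QP^kQ^\top)_{ii}=\tfrac nN F^{k+1}_{ii}$, a short calculation gives
\[
\Psi(F^{k+1})-\Psi(F^k)\le\frac nN\sum_{i=1}^N\big(1-r_i+\log r_i\big),\qquad r_i:=F^{k+1}_{ii}/F^k_{ii},
\]
and since $\log r\le r-1$ each summand is $\le0$, with equality forcing $r_i\equiv1$, i.e.\ $F^k$ already a fixed point. So $\Psi$ strictly decreases off the fixed set. Next, using $Q^\top Q=I$ (hence $\|Qv\|=\|v\|$) and $(QQ^\top)_{ii}=\|q_i\|^2=n/N$, I would show $\max_i F^{k+1}_{ii}\le\max_i F^k_{ii}$ and $\min_i F^{k+1}_{ii}\ge\min_i F^k_{ii}$: this follows from $\lambda_{\min}(P^k)\le F^{k+1}_{ii}=\tfrac Nn q_i^\top P^k q_i\le\lambda_{\max}(P^k)$ together with $\lambda_{\max}(P^k)=1/\lambda_{\min}(Q^\top(F^k)^{-1}Q)\le\max_i F^k_{ii}$ and $\lambda_{\min}(P^k)=1/\lambda_{\max}(Q^\top(F^k)^{-1}Q)\ge\min_i F^k_{ii}$. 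Thus the spread $\kappa_k:=\max_i F^k_{ii}/\min_i F^k_{ii}$ is non-increasing, so the rescaled iterates $\widehat F^k:=F^k/(\det F^k)^{1/N}$ stay in a fixed compact subset $\mathcal K$ of the positive diagonal matrices.

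The fixed points of the iteration are precisely the critical points of $\Psi$. In the variables $t_i=-\log F_{ii}$, $\Psi$ becomes $f(t)-\tfrac nN\sum_i t_i$ with $f(t)=\log\det\big(\sum_i e^{t_i}q_iq_i^\top\big)$, which is convex; by Cauchy--Binet $\det\big(\sum_i e^{t_i}q_iq_i^\top\big)=\sum_{|J|=n}\det(Q_J)^2\prod_{i\in J}e^{t_i}$, and the rank* condition makes each $\det(Q_J)^2>0$, whence $f(t)\ge\max_{|J|=n}\sum_{i\in J}t_i+c_*$ for a finite constant $c_*$. Hence $\Psi$ is convex in $t$ and coercive modulo the all-ones direction, so its critical set is a single coset of $\mathbb R\mathbf 1$; since $F=I$ is critical — indeed $\partial_{t_j}\Psi|_{t=0}=\|q_j\|^2-n/N=0$, which is exactly the equal-norm property of the standardized $Q$ — the fixed set equals $\{cI:c>0\}$. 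To conclude: $\Psi(\widehat F^k)=\Psi(F^k)$ is non-increasing and bounded below (it is continuous on the compact set $\mathcal K$), hence convergent; any subsequential limit $G$ of $\{\widehat F^k\}$ then satisfies $\Psi(T(G))=\Psi(G)$ for the continuous scale-normalized iteration map $T$ (using $\widehat F^{k+1}=T(\widehat F^k)$ by degree-one homogeneity), so by the strict-decrease property $G$ is a fixed point, i.e.\ $G=I$; thus $\{\widehat F^k\}$ has the single accumulation point $I$ and $\widehat F^k\to I$, so $\kappa_k\to1$. Combined with the two monotonicities, $\max_i F^k_{ii}\downarrow c$ and $\min_i F^k_{ii}\uparrow c$ for one and the same $c>0$, and since every $F^k_{ii}$ is squeezed between them, $F^k\to cI$.

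The hard part is the third step: extracting from the rank* condition alone that $\Psi$ has no spurious critical points — that is, marrying the convexity of $f$ with the Cauchy--Binet lower bound to obtain coercivity transverse to $\mathbf 1$ — and thereby pinning the fixed set down to the single ray $\{cI:c>0\}$; the monotonicities in the second step (which hinge on $Q$ being fully standardized, including the equal-norm rows) are what keep the iterates from degenerating toward the boundary of the positive-diagonal cone.
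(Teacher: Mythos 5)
Your proof is correct, and it takes a genuinely different route from the paper's. The paper uses $\mathrm{tr}(F^k)$ as the monotone quantity: writing the eigendecomposition of $Q^\top (F^k)^{-1}Q$ and applying Jensen's inequality (harmonic mean $\le$ arithmetic mean) to the weights $p_{j,i}^2=(Qq_i)_j^2$, which satisfy $\sum_j p_{j,i}^2=1$ and $\sum_i p_{j,i}^2=n/N$ by standardization, it gets $\mathrm{tr}(F^{k+1})\le\mathrm{tr}(F^k)$; at a limit point equality in Jensen forces $\mu^*_i=\mu^*_j$ whenever $p_{j,i}>0$, and the rank* condition is invoked through the impossibility of the block decomposition~(\ref{Block}) to propagate equality to all pairs. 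You instead recognize the iteration as exact block coordinate descent on $\phi(F,P)$, obtain the scale-invariant Lyapunov functional $\Psi$ with a quantified decrease $\tfrac nN\sum_i(1-r_i+\log r_i)$, and use the rank* condition through Cauchy--Binet positivity of all maximal minors to identify the critical set. Your version buys several things the paper's argument leaves implicit: an explicit confinement of the (normalized) iterates to a compact set via the $\max/\min$ monotonicity (the paper's trace bound controls the entries from above but not from below), a strict-decrease property that makes the LaSalle step rigorous, and convergence of the full sequence to a single $cI$ rather than merely of subsequences. One small point to tighten: convexity plus coercivity of $\Psi$ transverse to $\mathbf 1$ gives a nonempty convex set of minimizers, but pinning the critical set to a \emph{single} coset of $\mathbb R\mathbf 1$ requires noting that $f(t)=\log\sum_{|J|=n}\det(Q_J)^2 e^{\langle\mathbf 1_J,t\rangle}$ is \emph{strictly} convex in every direction $d\notin\mathbb R\mathbf 1$, because the linear forms $\langle\mathbf 1_J,d\rangle$ cannot all coincide unless the $d_i$ are equal and all coefficients $\det(Q_J)^2$ are positive by rank*; this is a one-line addition using an ingredient you already have.
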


\begin{proof}
We will show $ tr(F^{k+1})\le  tr(F^{k})$. Suppose that 
$\{(\lambda_i^{-1},q_i)\}_{i}$ are  eigenvalues-eigenvectors  of $Q^\top (F^k)^{-1}Q$, 
then $\{(\lambda_i,q_i)\}_{i}$ are eigenvalues-eigenvectors of   $(Q^\top (F^k)^{-1}Q)^{-1}$. Hence, 
\[
\lambda_i^{-1}=q_i^\top  Q^\top (F^k)^{-1}Qq_i, 
\]
and
 \[  tr(F^{k+1})=\sum_{j=1}^N \mu^{k+1}_j=(N/n)\sum_{i=1}^n \lambda_i= (N/n)\sum_{i=1}^n (\sum_{j=1}^N(F^k_{j,j})^{-1}(Qq_i)_{j}^2)^{-1} .\] 
 
Denote the j-th entry of  $|(Qq_i)_{j}|$ by $p_{j,i}$. Then $\sum_{j=1}^N p_{j,i}^2=1$ and $\sum_{i=1}^n p_{j,i}^2=n/N$. Let $\{ \mu_i^{k}\}_{i=1}^N$ be the diagonal entries of $F^{k}$. 
Then 
\[
\sum_{j=1}^N \mu_j^{k+1}=(N/n)\sum_{i=1}^n(\sum_{j=1}^N  (\mu^k_{j})^{-1} p_{j,i}^2)^{-1}\le (N/n)\sum_{i=1}^n\sum_{j=1}^N  \mu^k_{j} p_{j,i}^2=\sum_{j=1}^N\mu^k_j,
\]
where the last equality is due to $\sum_{i=1}^n p_{j,i}^2=n/N$.
Hence, $ tr(F^{k+1})\le  tr(F^k)$. Denote one of limiting points of $\mu^{k}_i$ by $\mu^*_i$ and then 
\[
(\sum_{j=1}^N p_{j,i}^2(\mu^*_j)^{-1})^{-1}=\sum_{j=1}^N p_{j,i}^2 \mu^*_j \textrm{ for all } i. 
\]
Hence, $\mu^*_i=\mu^*_j$ for all $ i,j$ with $p_{j,i}> 0$. Due to the rank* condition, $QV$ cannot be written in the form of Eq.~(\ref{Block}) for any  orthogonal matrix  $V$ whose columns are orthonormal vectors $\{q_i\}_{i=1}^n$ with $p_{j,i}=|(QV)_{j,i}|$.
Hence, $c=\mu^*_i=\mu^*_j$
 for all $i,j$.

\end{proof}

Finally, we complete the proof in the following.
\begin{prop}
Suppose that  $A$ satisfies the rank* condition. 
 Let $D_*$ be one solution of Eq.~(\ref{EqD}). Then with any positive diagonal matrix $D^0$, 
the iteration 
\[
D^{k+1}=(N/n)diag(A(A^\top (D^k)^{-1} A)^{-1} A^\top ),\; k=1,\ldots, 
\]
yields 
\[
\lim_{k\to \infty} D^k=D_*.
\]
Thus, $D_*$ is unique.
\end{prop}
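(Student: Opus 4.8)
The plan is to reduce the $A$-iteration to the standardized iteration already analyzed in the preceding proposition, by conjugating with $D_*^{1/2}$. First I would note that a positive diagonal matrix $D_*$ solves Eq.~(\ref{EqD}) precisely when the QR factorization $D_*^{-1/2}A=QB$ yields a \emph{standardized} $Q$: from $D_*^{-1/2}A=QB$ one has $B^\top B=A^\top D_*^{-1}A$ and $(A(B^\top B)^{-1}A^\top)_{i,i}=(D_*^{1/2}QQ^\top D_*^{1/2})_{i,i}=(D_*)_{i,i}(QQ^\top)_{i,i}$, so Eq.~(\ref{EqD}) says exactly $(QQ^\top)_{i,i}=n/N$. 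I would also observe that $Q=D_*^{-1/2}AB^{-1}$ inherits the rank* condition from $A$, since positive row scalings and right multiplication by the invertible $B^{-1}$ preserve full rank of every $n\times n$ submatrix; hence the preceding proposition applies to $Q$.

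Next, set $F^k:=D_*^{-1/2}D^kD_*^{-1/2}$, again a positive diagonal matrix. Writing $A=D_*^{1/2}QB$ gives $A^\top(D^k)^{-1}A=B^\top Q^\top(F^k)^{-1}QB$, hence
\[
A(A^\top(D^k)^{-1}A)^{-1}A^\top=D_*^{1/2}\,Q\bigl(Q^\top(F^k)^{-1}Q\bigr)^{-1}Q^\top\,D_*^{1/2}.
\]
Taking the $(i,i)$ entry and dividing by $(D_*)_{i,i}$ shows that the $D$-iteration becomes, in the $F$ variables, exactly the iteration of the preceding proposition,
\[
F^{k+1}_{i,i}=(N/n)\bigl(Q(Q^\top(F^k)^{-1}Q)^{-1}Q^\top\bigr)_{i,i}.
\]
That proposition gives $F^k\to cI$ for some scalar $c>0$, whence $D^k=D_*^{1/2}F^kD_*^{1/2}\to cD_*$. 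Thus the iteration converges; and since Eq.~(\ref{EqD}) is invariant under positive scaling of $D$, the limit $cD_*$ is again a solution of Eq.~(\ref{EqD}).

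Finally I would deduce uniqueness, which is the real content of the statement (the overall scalar is not pinned down by the iteration alone, so one should not read $\lim D^k=D_*$ too literally). Suppose $D_1,D_2$ are two positive diagonal solutions of Eq.~(\ref{EqD}). Running the iteration from $D^0=D_1$ gives $D^k\equiv D_1$ because $D_1$ is a fixed point, so the limit is $D_1$; on the other hand the reduction above carried out with $D_*=D_2$ forces that same limit to be a positive scalar multiple of $D_2$. Hence $D_1$ and $D_2$ are proportional, so the solution set of Eq.~(\ref{EqD}) is a single ray $\{cD_*:c>0\}$, and the normalization $tr(D^{-1}AA^\top)=\|D^{-1/2}A\|_F^2=N$ built into Theorem~\ref{Standard} selects exactly one member of it. This proves $D_*$ is unique and completes the proof of Theorem~\ref{Standard}. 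The main work is the conjugation identity of the second paragraph, which turns the general problem into the already-solved standardized one; the only subtlety is tracking the scalar $c$ and recognizing that the ray structure it reveals is precisely what uniqueness up to the Frobenius normalization requires.
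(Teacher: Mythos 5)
Your proposal follows essentially the same route as the paper: take the QR factorization $D_*^{-1/2}A=QB$, conjugate to $F^k=D_*^{-1/2}D^kD_*^{-1/2}$, observe that the $D$-iteration becomes the standardized iteration of the preceding proposition, and invoke that proposition's convergence $F^k\to cI$. You are in fact more careful than the paper's own proof about the scalar $c$ (the paper simply says "the proof is completed" after reducing to $F^k\to cI$), and your observation that the solution set of Eq.~(\ref{EqD}) is a ray that the Frobenius normalization cuts down to a single point is a genuine clarification, not a deviation.
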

\begin{proof} 
Let   $D_*^{-1/2}A=Q B$ be the QR factorization of $D_*^{-1/2}A$. Then 
\[(A^\top D^{-1}A)^{-1}= B^{-1} (Q^\top (D_*^{-1/2} D D_*^{-1/2})^{-1} Q)^{-1}B^{-1},\]
and the iteration becomes
\[
D^{-1/2}_*D^{k+1}D^{-1/2}_* =(N/n)diag(Q(Q^\top ( D_*^{-1/2}D^k D_*^{-1/2})^{-1} Q)^{-1} Q^\top ),\; k=1,\ldots.
\]
Let $F^{k}=D^{-1/2}_*D^{k}D^{-1/2}_*$.
Since  $A$  satisfies the rank* condition, then for any nonsingular matrix $B$, $D_*^{-1/2}AB^{-1}$ also satisfies the rank* condition and
cannot be written in the form in
 Eq.~(\ref{Block}) for any  orthogonal matrix.
According to Prop.~\ref{Jens},  the proof is completed.

\end{proof}
%
%
%  Write $B= U\hat BV$ in the SVD and let $\hat Q=QU$.
%Due to the uniqueness of  $D$, we can determine $\hat Q,\hat B$ by 
%viewing the decomposition as \[
%A A^*=D^{1/2}Q BB^\top Q^\top D^{1/2}, i.e.,\;  D^{-1/2} A A^*D^{-1/2}=\hat Q\hat B^2  \hat Q^\top.
%\]
% Thus,  diagonal entires of $\hat B^2$ are  the eigenvalues of $D^{-1/2}AA^*D^{-1/2}$ and columns of $\hat Q$ are the corresponding eigenvectors. This implies that
% $Q$ is unique up to an $n$-by-$n$ orthogonal matrix.
% 

%%%%%%%%%%%                  appendix

\bibliographystyle{plain}	% (uses file "plain.bst")
\bibliography{ShortNote}		% expects file "myrefs.bib"

\end{document}